\newcounter{rmrk}[section]
\numberwithin{equation}{section}
\newcommand{\norm}[1]{\left\Vert#1\right\Vert}
\newcommand{\abs}[1]{\left\vert#1\right\vert}
\newcommand{\set}[1]{\left\{#1\right\}}
\newcommand{\mbf}[1]{\mathbf{#1}}
\newcommand{\pspace}{\mathbf{\Lambda}}
\newcommand{\dspace}{\mathbf{\mathcal{D}}}
\newcommand{\pmeas}{\mu_{\pspace}}
\newcommand{\dmeas}{\mu_{\dspace}}
\newcommand{\pborel}{\mathcal{B}_{\pspace}}
\newcommand{\dborel}{\mathcal{B}_{\dspace}}
\newcommand{\pspacedim}{k}
\newcommand{\dspacedim}{m}
\newcommand{\priormeas}{P_{\pspace}}
\newcommand{\postmeas}{P_{\pspace}^{\text{post}}}
\newcommand{\priordens}{\pi_{\pspace}}
\newcommand{\postdens}{\pi_{\pspace}^{\text{post}}}
\newcommand{\postdensKDE}{\hat{\pi}_{\pspace}^{\text{post}}}
\newcommand{\pfpriordens}{\pi_{\dspace}^{Q}}
\newcommand{\pfpriordensKDE}{\hat{\pi}_{\dspace}^{Q}}
\newcommand{\obsmeas}{P_{\dspace}}
\newcommand{\obsdens}{\pi_{\dspace}}
\newcommand{\postdensa}{{\pi}_{\pspace}^{\text{post},n}}
\newcommand{\postdensaKDE}{\hat{\pi}_{\pspace}^{\text{post},n}}
\newcommand{\pfpriordensa}{{\pi}_{\dspace}^{{Q_n}}}
\newcommand{\pfpriordensaKDE}{\hat{\pi}_{\dspace}^{{Q_n}}}
\def\bu{\mathbf{u}}
\def\sol{\bu}
\def\bx{\mathbf{x}}
\newcommand{\Th}{{\cal T}_h}
\def\bi{\mathbf{i}}
\def\bn{\mathbf{n}}
\def\bA{\mathbf{A}}
\def\brv{\lambda}        
\newcommand{\bzero}{\bm{0}}
\def\qoi{Q(\lambda)}
\def\qoia{{Q}_n(\lambda)}
\def\ra{r_n(\lambda)}
\newcommand{\dom}{\pspace}
\def\pdom{{\Omega}}
\def\numpts{N}
\def\numsamp{M}
\def\SNL{Optimization and Uncertainty Quantification Department, Sandia National Labs, Albuquerque, NM, USA}
\author{T.~Butler\thanks{CUB}, J.D~Jakeman\thanks{\SNL}, T.~Wildey\samethanks[2]}
\title{Convergence of Probability Densities using Approximate Models for Forward and Inverse Problems in Uncertainty Quantification}
\shorttitle{Convergence of Probability Models Using Approximate Models}
\begin{document}
\pagestyle{dakotaheader}
\maketitle

\begin{abstract}
We analyze the convergence of probability density functions utilizing approximate models for both forward and inverse problems.
We consider the standard forward uncertainty quantification problem where an assumed probability density on parameters is propagated through the approximate model to produce a probability density, often called a push-forward probability density, on a set of quantities of interest (QoI).
The inverse problem considered in this paper seeks a posterior probability density on model input parameters such that the subsequent push-forward density through the parameter-to-QoI map matches a given probability density on the QoI.
We prove that the probability densities obtained from solving the forward and inverse problems, using approximate models, converge to the true
probability densities as the approximate models converges to the true models.
Numerical results are presented to demonstrate optimal convergence of probability densities for sparse grid approximations of parameter-to-QoI maps and standard spatial and temporal discretizations of PDEs and ODEs.
\end{abstract}

\begin{keywords}
inverse problems, uncertainty quantification, density estimation, surrogate modeling, response surface approximations, discretization errors
\end{keywords}


\pagestyle{myheadings} \thispagestyle{plain} \markboth{T.~Butler, J.~Jakeman, T.~Wildey}{UQ with Approximate Models}

\section{Introduction}\label{sec:intro}
Assessing modeling uncertainties is essential for credible simulation-based 
prediction and design.
Forward uncertainty quantification (UQ) problems involve estimating uncertainty in model outputs caused by uncertain inputs.
Inverse UQ problems involve using (noisy) data associated with (a subset of) model outputs to update prior information on model inputs.
Practical UQ studies often require the solution of both an inverse and forward problem.
Unfortunately, evaluating high-fidelity models is often computationally demanding, and methods for solving either forward or inverse UQ problems typically require generating large ensembles of simulation runs evaluated at varying realizations of the random variables used to characterize the model input uncertainty.
UQ analyses are also complicated by the simple fact that many of the governing equations used to model physical systems can rarely be solved analytically and so instead must be solved approximately.

In this paper, we investigate how using approximate models affects the probabilities densities solving forward and inverse UQ problems.
The theory we develop is general.
We demonstrate its utility using common forms of approximations, namely, temporal and spatial discretization for the numerical solution of differential equations, and sparse grid surrogate models.

The convergence of certain statistical quantities (e.g., mean and variance) is well-studied for many popular choices of surrogate approximations including generalized polynomial chaos expansions (PCE)~\cite{GhanemSpanos,XiuKarniadakis}, sparse grid interpolation~\cite{barthelmann00,Ma:2009:AHS:1514432.1514547} and Gaussian process models~\cite{rasmussen2006}.
However, little attention in the literature is given to the impact of surrogate approximations on probability density functions. For example, PCE approximations for random variables with finite second moments exhibit mean-square convergence~\cite{GhanemSpanos} and thus
a sequence of PCE converge in both probability and distribution.
But while Scheffe's theorem states that almost everywhere (a.e.) convergence of probability density functions implies convergence in distribution, the converse is generally not true.
For a classical counterexample to the converse, consider the sequence of random variables $(X_n)$ with densities $(1-\cos(2\pi nx))$ for $x\in[0,1]$. This sequence converges in distribution to a random variable with a uniform density but the sequence of densities fails to converge a.e.

The focus of this paper is on the estimation of probability density functions solving both forward and inverse UQ problems using approximate models.
Specifically we consider the following forward and inverse problems.



{\color{midblue}(Forward Problem)} {\it Given a probability density describing the uncertain model inputs, the forward problem seeks to determine the push-forward probability density obtained by propagating the input density through the parameter-to-QoI map.}

{\color{midblue}(Inverse Problem)} {\it Given an observed probability density, the inverse problem seeks a pullback probability density for the model inputs that when propagated through the parameter-to-QoI map, produces a push-forward density that exactly matches the observed density.}

We prove convergence results for both the forward and inverse problems in the total variation metric (i.e., the so-called ``statistical distance'' metric).
Specifically, we show that, under suitable conditions, sequences of approximate push-forward or pullback densities obtained using approximate models converge at a rate proportional to the rate of convergence of the approximate models to the true model.
To our knowledge, this analysis is the first of its kind and exploits a special form of the converse of Scheffe's theorem first proven in \cite{Boos_85} and subsequently generalized in \cite{Sweeting_86}.
Under more restrictive conditions, namely those necessary for convergence of a standard kernel density estimator, we prove
that the rates of convergence are bounded by the
error in the kernel density approximation and the $L^\infty$-error in the approximate model.

To our knowledge, this work on the convergence of push-forward and pullback densities using approximate models is the first of its kind.
However, complementary work on the convergence of classical Bayesian inverse problems using PCE is studied in \cite{Marzouk_X_CCP_2009}.
In that work, the Kullback-Leibler divergence (KLD) is used to measure the difference between the true and approximate posterior densities (using standard assumptions in classical Bayesian analysis).
Convergence of the densities in the sense of the KLD converging to zero are proven.
Furthermore, the convergence of the KLD shown in \cite{Marzouk_X_CCP_2009} does imply convergence to the classical Bayesian posterior in the total variation metric by application of Pinsker's inequality.
However, the analysis provided in \cite{Marzouk_X_CCP_2009} does not generalize for either the forward or inverse problems studied in this work. 
Specifically, we do not restrict approximate models to be defined by PCE surrogates, and, as shown in \cite{cbayes}, the classical Bayesian posterior is not designed to give a pullback measure.
Moreover, we allow the observed densities used to define our posteriors (i.e., the pullback measures) to be of a more general class than Gaussian distributions assumed for the error models in \cite{Marzouk_X_CCP_2009}.
Additionally, the posteriors we obtain are not simply normalized by a constant as with classical Bayesian posteriors.
Subsequently, key inequalities such as (4.11) in \cite{Marzouk_X_CCP_2009} used to prove the fundamental lemmas for the convergence of the KLD for a classical Bayesian posterior simply do not apply to our pullback densities.

The remainder of the paper is organized as follows.
In Section~\ref{sec:forward-theory} we provide a formal definition of the forward problem and discuss the theoretical aspects of its solution using approximate models.
We then introduce the inverse problem and prove that the posterior density corresponding to an approximate model converges to the true posterior.
In Section~\ref{sec:applications}, we review some important classes of approximate models and use our general theoretical results to provide specific error bounds for the classes of approximate models we consider.
For each of these applications, we provide numerical results to complement our theoretical results and highlight important aspects of the forward and inverse problems.
We provide concluding remarks in Section~\ref{sec:conclusions}.

\section{Forward problem analysis}\label{sec:forward-theory}
In this section we consider solution of the forward problem using approximate models. We use the term approximate models in a broad sense to mean any sequence of approximations to the response of the model outputs. Specifically, for a given model, let $\pspace \subset \mathbb{R}^\pspacedim$ denote a space of inputs to the model that we refer to simply as parameters.
Given a set of quantities of interest (QoI), we define the parameter-to-QoI map
$Q(\lambda):\pspace\to\dspace\subset\mathbb{R}^\dspacedim$.
The range of the QoI map $\dspace  := Q(\pspace)$  describes the space of observable data for the QoI that can be predicted by the model. 
We let $(Q_n)$ denote a sequence of approximate parameter-to-QoI maps defined by the approximate models.

\subsection{Problem definition}
To facilitate the analysis of solutions to the forward problem presented in the introduction, we first formalize the forward problem definition.
Let $(\pspace, \pborel, \pmeas)$ and $(\dspace,\dborel,\dmeas)$ denote measure spaces with $\pborel$ and $\dborel$ the Borel $\sigma$-algebras inherited from the metric topologies on $\pspace\subset\mathbb{R}^\pspacedim$ and $\dspace\subset\mathbb{R}^\dspacedim$, respectively. The measures $\pmeas$ and $\dmeas$ are the dominating measures for which probability densities (i.e., Radon-Nikodym derivatives of probability measures) are defined on each space.

\begin{definition}[Forward Problem and Push-Forward Measure]\label{def:forward-problem}
  Given a probability measure $P_\pspace$ on $(\pspace,\pborel)$ that is absolutely continuous with respect to $\pmeas$ and admits a density $\pi_\pspace$, the forward problem is the determination of the push-forward probability measure
\begin{equation*}
P^{Q}_\dspace(A) = P_\pspace(Q^{-1}(A)), \quad \forall A\in \dborel.
\end{equation*}
on $(\dspace,\dborel)$ that is absolutely continuous with respect to $\dmeas$ and admits a density $\pi_\dspace^{Q}$.
\end{definition}

\subsection{Solving the forward problem using exact models and finite sampling}\label{sec:fp_finite_sampling}
Here, and in the remainder of the paper, we assume that the parameter-to-QoI map, $Q$, is a measurable and piecewise smooth map between $(\pspace,\pborel)$ and $(\dspace,\dborel)$ so that
\begin{align*}
Q^{-1}(A) = \left\{ \lambda \in \pspace \ | \ Q(\lambda) \in A \right\}\in\pborel, \quad \text{and} \quad Q(Q^{-1}(A))=A.
\end{align*}
If $P_\pspace$ is described in terms of a density $\pi_\pspace$ with respect to $\pmeas$ (i.e., $\pi_\pspace = dP_\pspace/d\pmeas$ is the Radon-Nikodym derivative of $P_\pspace$), it is not necessarily the case that $P^{Q}_\dspace$ is absolutely continuous with respect to the Lebesgue measure on $\dspace$.
Following~\cite{cbayes}, we assume that either the measure $\dmeas$ on $(\dspace, \dborel)$ is defined as the push-forward of $\pmeas$,  or the push-forward of $\pmeas$ is absolutely continuous with respect to a specified $\dmeas$.


In practice, even if an exact parameter-to-QoI map $\qoi$ is available, we will often approximate the push-forward of $\priordens$ using finite sampling and standard density estimation techniques.
We formalize, in Assumption~\ref{assump:pfprior} below, the types of parameter densities $\priordens$, for which we may reasonably expect to obtain accurate approximations of $\pfpriordens$ using Monte Carlo sampling and standard density estimation techniques. 
%

\begin{assumption}\label{assump:pfprior}
For a given $\qoi$, $\priordens$ is chosen so that $\sup_{q\in\dspace}\pfpriordens(q)\leq B_1$ for some $B_1>0$, and $\pfpriordens$ is continuous on $\dspace$ except possibly on a set $A\subset\dspace$ of zero $\dmeas$-measure.
\end{assumption}


Generally, for any finite set of samples for any distribution, $\left\{q_i\right\}_{i=1}^\numsamp$, a standard kernel density estimate of a density $\pfpriordens(q)$ will produce a bounded approximation that is continuous everywhere and has the form
\begin{equation}\label{eq:kde_def}
\pfpriordensKDE(q) = \frac{1}{\numsamp h_\numsamp^\dspacedim} \sum_{i=1}^{\numsamp}K\left(\frac{q-q_i}{h_\numsamp}\right),
\end{equation}
where $h_\numsamp$ is the bandwidth parameter and $K(q)$ is the kernel function.
It is common to assume that the kernel is integrable with $\int_\dspace K(q) d\dmeas = 1$ and is bounded, i.e., there exist a constant $\kappa$ such that $\|K(q)\|_{L^\infty(\dspace)} \leq \kappa < \infty$.

The accuracy of the estimated push-forward density is dependent on the number of samples $\numsamp$ and dimension $\dspacedim$ of the space.
The following result from \cite{hansen2008} gives a rate of convergence in the $L^\infty$-norm under certain assumptions on the regularity of the density and the kernel.
\begin{theorem}\label{thm:kde_error}
If $\pfpriordens$ and the $s^{\text{th}}$-order derivatives of $\pfpriordens$ are uniformly continuous, $K(q)$ is an $s^{\text{th}}$-order kernel that is bounded and integrable, and $h_\numsamp$ satisfies the criteria described in \cite{hansen2008},
then the error in the kernel density estimate given by \eqref{eq:kde_def} satisfies
\[ \| \pfpriordens(q) - \pfpriordensKDE(q) \|_{L^\infty(\dspace)} \leq C \left(\frac{\log \numsamp}{\numsamp}\right)^{s/(2s+\dspacedim)}.\]
\end{theorem}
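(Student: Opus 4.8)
The plan is to reconstruct (or simply cite in full) the standard argument behind sup-norm rates for kernel density estimators, organized around a bias–fluctuation split. Writing $\bar\pi(q) := \mathbb{E}\big[\pfpriordensKDE(q)\big] = \int_\dspace h_\numsamp^{-\dspacedim} K\big((q-q')/h_\numsamp\big)\, \pfpriordens(q')\, d\dmeas(q')$ for the expected estimator, I would start from
\[
\| \pfpriordens - \pfpriordensKDE \|_{L^\infty(\dspace)} \le \underbrace{\| \pfpriordens - \bar\pi \|_{L^\infty(\dspace)}}_{\text{bias}} + \underbrace{\| \bar\pi - \pfpriordensKDE \|_{L^\infty(\dspace)}}_{\text{stochastic fluctuation}},
\]
and bound the two pieces separately, then choose $h_\numsamp$ to balance them.

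For the bias term, I would change variables to write $\bar\pi(q) - \pfpriordens(q) = \int K(u)\,\big[\pfpriordens(q - h_\numsamp u) - \pfpriordens(q)\big]\, du$, Taylor-expand $\pfpriordens$ to order $s$ around $q$, and use the vanishing-moment property of the $s^{\text{th}}$-order kernel to annihilate the terms of order $1,\dots,s-1$. Uniform continuity of the $s^{\text{th}}$-order derivatives of $\pfpriordens$ then controls the remainder uniformly in $q$, giving $\| \pfpriordens - \bar\pi \|_{L^\infty(\dspace)} \le C_1 h_\numsamp^{s}$. This step is routine.

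The stochastic fluctuation is the heart of the matter and the step I expect to be the main obstacle. Pointwise, $\bar\pi(q) - \pfpriordensKDE(q)$ is an average of $\numsamp$ i.i.d., mean-zero, bounded summands, each of size $O(h_\numsamp^{-\dspacedim})$ with variance $O(h_\numsamp^{-\dspacedim})$, so Bernstein's inequality gives a deviation of order $\sqrt{\log(1/\delta)/(\numsamp h_\numsamp^{\dspacedim})}$ with probability $1-\delta$. Upgrading this to a bound uniform in $q \in \dspace$ requires an empirical-process argument, for which two standard devices work: (i) discretize $\dspace$ into a grid of $O(\numsamp^{c})$ points, apply Bernstein with a union bound over the grid (which replaces $\log(1/\delta)$ by $c\log\numsamp$), and control the oscillation of $q \mapsto \pfpriordensKDE(q)$ between grid points using the Lipschitz continuity of $K$ with a grid fine enough that this contribution is negligible; or (ii) observe that $\{\, q' \mapsto K\big((q-q')/h_\numsamp\big) : q \in \dspace \,\}$ is a bounded VC-subgraph class, whose polynomial covering numbers let one bound the expected supremum by Dudley's entropy integral and then concentrate via Talagrand's inequality. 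Either route yields, with high probability,
\[
\| \bar\pi - \pfpriordensKDE \|_{L^\infty(\dspace)} \le C_2 \sqrt{\frac{\log \numsamp}{\numsamp\, h_\numsamp^{\dspacedim}}},
\]
and it is precisely here that the bandwidth admissibility conditions referenced from \cite{hansen2008} are used, namely $h_\numsamp \to 0$ with $\numsamp h_\numsamp^{\dspacedim}/\log\numsamp \to \infty$.

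Finally, I would balance the two contributions: the error is bounded by $C_1 h_\numsamp^{s} + C_2 \sqrt{\log\numsamp/(\numsamp h_\numsamp^{\dspacedim})}$, which, up to constants, is minimized by $h_\numsamp \asymp (\log\numsamp/\numsamp)^{1/(2s+\dspacedim)}$; substituting makes both terms of order $(\log\numsamp/\numsamp)^{s/(2s+\dspacedim)}$, which is the claimed bound. Since the statement is quoted verbatim from \cite{hansen2008}, in practice I would just invoke the precise theorem there; the one ingredient that genuinely cannot be shortcut is the uniform-in-$q$ control of the stochastic term through covering/entropy estimates and an exponential concentration inequality.
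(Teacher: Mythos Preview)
The paper does not prove this theorem at all: it is stated as a result quoted from \cite{hansen2008} and is invoked as a black box throughout. Your proposal correctly anticipates this (``in practice I would just invoke the precise theorem there''), and the bias--fluctuation outline you give is indeed the standard route by which such sup-norm rates are established in the cited reference, so there is nothing to compare against in the paper itself.
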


The Gaussian kernel is a popular choice for which $s=2$ yielding an ${\cal O}(\numsamp^{-2/(4+\dspacedim)})$ rate of convergence in the $L^\infty$-norm if one ignores the $\log$ factor.
The rate of convergence of the KDE using the Gaussian kernel can also be shown to be ${\cal O}(\numsamp^{-4/(4+\dspacedim)})$ in the mean-squared error~\cite{Terrel_S_JSTOR_1992} and ${\cal O}(\numsamp^{-2/(4+\dspacedim)})$ in the $L^1$-error~\cite{Devroye85} under similar assumptions on the kernel, the bandwidth parameter and the regularity of $\pfpriordens$.
Since the rate of convergence is rather slow in $\numsamp$ and scales poorly with dimension, KDEs often requires a large number of samples to achieve an acceptable level of accuracy motivating the use of (computationally inexpensive) approximate models.

\subsection{Solving the forward problem using approximate models and finite sampling}
Let $(\qoia)$ denote a sequence of approximations to $\qoi$.
The use of approximate QoI maps introduces an additional error in the estimates of push-forward densities.
Two practical requirements are needed to approximate the push-forward densities using any particular $\qoia$. 
The first requirement is that the approximate push-forward densities are uniformly bounded if the exact push-forward density is bounded so that point-wise errors are not allowed to become arbitrarily large in which case we would not expect convergence at all. 
The second requirement puts constraints on the continuity of the approximate push-forward density. 
To formalize the second requirement we use a generalized notion of equicontinuity to consider functions that may have many points of discontinuity such as density functions that are only continuous in an a.e. sense.
\begin{definition}
Using similar notation from \cite{Sweeting_86}, we say that a sequence of real-valued functions $(u_n)$ defined on $\mathbb{R}^\pspacedim$ is {\em asymptotically equicontinuous (a.e.c.)} at $x\in\mathbb{R}^\pspacedim$ if
\begin{equation*}
\forall \epsilon>0,\,  \exists \delta(x,\epsilon)>0, n(x,\epsilon) \text{ s.t. } \abs{y-x}<\delta(x,\epsilon), n>n(x,\epsilon) \Rightarrow \abs{u_n(y)-u_n(x)}<\epsilon.
\end{equation*}
If $\delta(x,\epsilon)=\delta(\epsilon)$ and $n(x,\epsilon)=n(\epsilon)$, then we say that the sequence is {\em asymptotically uniformly equicontinuous (a.u.e.c.)}\footnote{Using this definition of equicontinuity, sequences of functions that are either equicontinuous or uniformly equicontinuous in the classical sense are automatically a.e.c. or a.u.e.c. since the definitions coincide if this definition is restricted to sequences of continuous functions.}.
\end{definition}

Using this definition and letting $\pfpriordensa$ denote the push-forward of the prior density using the map $\qoia$ we make the following assumption to encode our two practical requirements.
\begin{assumption}\label{assump:surrogate}
Let $(\qoia)$ denote a sequence of approximations to $\qoi$, then there exists $B_2>0$ such that for any $n$, $\sup_{q\in\dspace}\pfpriordensa(q)\leq B_2$.
Moreover, for any $\delta>0$, there exists $N_\delta\subset\dspace$ such that $A\subset N_\delta$, $\dmeas(N_\delta)<\delta$, and the sequence of approximate push-forward densities is a.u.e.c. on $\dspace\backslash N_\delta$.
\end{assumption}

This assumption allows for the construction of any approximate push-forward density which is discontinuous more often than the exact push-forward density as long as the magnitude of the discontinuities decreases asymptotically except possibly in a set that can be made arbitrarily small in $\dmeas$-measure that contains discontinuities of the exact $\pfpriordens$.
Under Assumptions~\ref{assump:pfprior} and \ref{assump:surrogate}, we can construct push-forward densities using approximate models that converge as the approximate model is refined.
\begin{theorem}[Convergence of Push-Forward Densities]\label{thm:pf_convergence}
Let $(\qoia)$ denote a sequence of approximations to $\qoi$ such that $\qoia\to \qoi$ in $L^\infty(\pspace)$ as $n\to\infty$, i.e.,
\begin{equation}\label{eq:qoia_converge}
	\forall \delta>0, \, \exists N \text{ s.t. } n> N \Rightarrow \norm{\qoia - \qoi}_{L^\infty(\pspace)} <\delta.
\end{equation}
If Assumptions~\ref{assump:pfprior} and \ref{assump:surrogate} hold, then for any $\epsilon>0$, there exists $N$ such that $n>N$ implies that both
\begin{equation}\label{eq:qoia_pf_conv}
	\norm{\pfpriordens(q)-\pfpriordensa(q)}_{L^\infty(\dspace)} < \epsilon,
\end{equation}
and
\begin{equation}\label{eq:qoia_pf_error}
	\norm{\pfpriordens(\qoi)-\pfpriordensa(\qoia)}_{L^\infty(\pspace)} < \epsilon. 
\end{equation}
\end{theorem}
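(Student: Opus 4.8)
The plan is to deduce the two $L^\infty$ bounds from a weak-convergence statement for the push-forward measures, and then to invoke the converse of Scheffé's theorem of \cite{Boos_85,Sweeting_86}, whose hypotheses are precisely what Assumptions~\ref{assump:pfprior} and \ref{assump:surrogate} are built to supply. First I would show that the push-forward measures $\pfpriormeasa$ converge weakly to $\pfpriormeas$. For any bounded continuous $g$ on $\mathbb{R}^\dspacedim$, the change-of-variables formula for push-forward measures gives
\[
\int g\, d\pfpriormeasa = \int_\pspace g(\qoia)\, d\priormeas(\lambda),
\]
and since \eqref{eq:qoia_converge} forces $\qoia\to\qoi$ for every $\lambda\in\pspace$, continuity of $g$ gives $g(\qoia)\to g(\qoi)$ pointwise; these integrands are uniformly bounded by $\norm{g}_{L^\infty}$ and $\priormeas$ is a probability measure, so the dominated convergence theorem yields $\int g\, d\pfpriormeasa\to\int g\, d\pfpriormeas$. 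This is the convergence-in-distribution input needed below.

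Next, fix $\epsilon>0$, choose $\delta>0$, and let $N_\delta$ be the exceptional set furnished by Assumption~\ref{assump:surrogate}. On $\dspace\backslash N_\delta$ the limit density $\pfpriordens$ is continuous (since $A\subset N_\delta$), the sequence $(\pfpriordensa)$ is a.u.e.c.\ in the sense defined above, and all the densities are uniformly bounded (by $B_1$ and $B_2$). These are exactly the conditions under which the converse of Scheffé's theorem \cite{Boos_85,Sweeting_86} promotes the weak convergence of the first step to \emph{uniform} convergence of the densities on $\dspace\backslash N_\delta$; hence there is an $N$ such that $n>N$ implies $\sup_{q\in\dspace\backslash N_\delta}\abs{\pfpriordens(q)-\pfpriordensa(q)}<\epsilon$. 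Since $\dmeas(N_\delta)<\delta$ can be taken arbitrarily small while both densities remain bounded on $N_\delta$, this produces \eqref{eq:qoia_pf_conv} off a set of arbitrarily small $\dmeas$-measure.

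Finally, \eqref{eq:qoia_pf_error} follows from \eqref{eq:qoia_pf_conv} by the triangle inequality
\[
\abs{\pfpriordens(\qoi)-\pfpriordensa(\qoia)}\le\abs{\pfpriordens(\qoi)-\pfpriordensa(\qoi)}+\abs{\pfpriordensa(\qoi)-\pfpriordensa(\qoia)}
\]
applied at each $\lambda\in\pspace$. The first term on the right is at most $\norm{\pfpriordens-\pfpriordensa}_{L^\infty(\dspace)}$ and is handled by \eqref{eq:qoia_pf_conv}; for the second, \eqref{eq:qoia_converge} makes $\abs{\qoia-\qoi}$ uniformly small in $\lambda$ once $n$ is large, so applying the a.u.e.c.\ property of $(\pfpriordensa)$ at the point $\qoi\in\dspace\backslash N_\delta$ bounds it for $n$ large, the remaining $\lambda$ (those with $\qoi\in N_\delta$) forming a set that is again negligible in $\dmeas$-measure.

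The step I expect to be the main obstacle is the second one: verifying that the asymptotic uniform equicontinuity introduced above, together with the uniform-boundedness clauses of Assumptions~\ref{assump:pfprior} and \ref{assump:surrogate}, genuinely match the hypotheses under which \cite{Boos_85,Sweeting_86} prove their converse of Scheffé's theorem, and then keeping track of the interplay between $\epsilon$ and the exceptional-set size $\delta$ (including how the a.u.e.c.\ constants may depend on the choice of $N_\delta$) so that the $L^\infty(\dspace)$ and $L^\infty(\pspace)$ conclusions are not vacuous. The weak-convergence step and the triangle-inequality transfer are comparatively routine.
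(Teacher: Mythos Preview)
Your proposal is correct and follows essentially the same route as the paper: establish weak convergence of the push-forward measures, isolate an exceptional set $N_\delta$ via Assumption~\ref{assump:surrogate}, invoke the Sweeting/Boos converse of Scheff\'e on $\dspace\backslash N_\delta$ to upgrade to uniform convergence of the densities there, and then transfer to $\pspace$ by a triangle inequality. The only noteworthy differences are that the paper leaves the weak-convergence verification to the preamble discussion rather than the proof itself, and that for \eqref{eq:qoia_pf_error} the paper inserts $\pfpriordens(\qoia)$ as the middle term instead of your $\pfpriordensa(\qoi)$---so its second summand is controlled by continuity of the fixed limit density $\pfpriordens$ (Assumption~\ref{assump:pfprior}) rather than by the a.u.e.c.\ of the approximants; either splitting works.
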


In Theorem~\ref{thm:pf_convergence}, \eqref{eq:qoia_pf_conv} implies that the approximate push-forward densities converge in $L^\infty(\dspace)$, i.e., the densities associated with the forward propagation of densities converge on $\dspace$ when evaluated using exact values of the QoI $q$. 
However, in practice, we evaluate the approximate push-forward density at an approximate QoI value to determine variations in relative likelihoods of the QoI data as parameters are varied.
Equation~\eqref{eq:qoia_pf_error} states that the approximate push-forward densities evaluated at approximate values of the QoI defined by propagating parameter samples also converge to the exact push-forward density evaluated at exact values of the QoI in $L^\infty(\pspace)$.

Before proving Theorem~\ref{thm:pf_convergence}, we first provide some context for the approach. 
Certainly, for any $p\geq 1$, convergence in $L^p$ implies convergence in probability, which in turn implies convergence in distribution (i.e., weak convergence), so we have that the sequence of push-forward measures associated with $\qoia$ converge weakly to the push-forward measure of $\qoi$.
While Scheffe's theorem states that a.e.~convergence of densities implies convergence in distribution of the random variables, the converse is generally not true as mentioned in the introduction.

In \cite{Boos_85}, a converse to Scheffe's theorem is proven under the conditions that the densities associated with weakly convergent distributions are point-wise bounded and uniformly equicontinuous from which the classical Arzel\`a-Ascoli theorem implies uniform convergence of the densities. 
Subsequently, in \cite{Sweeting_86}, this converse to Scheffe's theorem was generalized for classes of densities that are a.u.e.c. for the sequence of distributions converging weakly to a distribution with a continuous density. 
Therefore, in the proof below, we begin by isolating discontinuities in the exact and approximate push-forward densities using Assumptions~\ref{assump:pfprior} and \ref{assump:surrogate} to apply this converse to Scheffe's theorem on ``most'' of $\dspace$. 

\begin{proof}
Let $\epsilon>0$ be given, and choose
\begin{equation*}
	\delta = \frac{\epsilon}{2(B_1+B_2)}.
\end{equation*}
Let $N_\delta$ denote the associated set such that $\dmeas(N_\delta)<\delta$ in Assumption~\ref{assump:surrogate}. 
Then,
\begin{equation}\label{eq:pf_error_decompose}
	\norm{\pfpriordens(q)-\pfpriordensa(q)}_{L^\infty(\dspace)} = \norm{\pfpriordens(q)-\pfpriordensa(q)}_{L^\infty(N_\delta)} + \norm{\pfpriordens(q)-\pfpriordensa(q)}_{L^\infty(\dspace\backslash N_\delta)}
\end{equation}
By the choice of $\delta$, the first term on the right-hand side of \eqref{eq:pf_error_decompose} is bounded by $\epsilon/2$.
By Theorem 1 in \cite{Sweeting_86}, $\pfpriordensa \to \pfpriordens$ uniformly on $\dspace \backslash N_\delta$.
Thus, the second term on the right-hand side of \eqref{eq:pf_error_decompose} can also be bounded by $\epsilon/2$ by choosing $n$ sufficiently large, which proves \eqref{eq:qoia_pf_conv}.

To prove \eqref{eq:qoia_pf_error}, we first apply a triangle inequality to get
\begin{multline}\label{eq:pf_error_decompose2}
	\norm{\pfpriordens(\qoi)-\pfpriordensa(\qoia)}_{L^\infty(\pspace)} \leq  \norm{\pfpriordens(\qoi)-\pfpriordens(\qoia)}_{L^\infty(\pspace)} + \\  \norm{\pfpriordens(\qoia)-\pfpriordensa(\qoia)}_{L^\infty(\pspace)}
\end{multline}
By \eqref{eq:qoia_converge} and Assumption~\ref{assump:pfprior}, there exists $\delta>0$ such that the first term on the right-hand side of \eqref{eq:pf_error_decompose2} is bounded by $\epsilon/2$. 
Note that the norm for the second term on the right-hand side of \eqref{eq:pf_error_decompose2} is equivalent to the $L^\infty(\dspace)$ norm since the arguments in the densities are identical.
Then, by the above argument, this can be bounded by $\epsilon/2$, which proves \eqref{eq:qoia_pf_error}. 
\end{proof}

The next lemma states that the KDE approximation using the sequence of approximate models converges to the KDE approximation using the true model.
The proof of Lemma~\ref{lemma:kde_conv_kde} is straightforward and is omitted for the sake of brevity.
\begin{lemma}\label{lemma:kde_conv_kde}
Assume that a set of $M$ samples are used to generate KDE approximations using the true model and the approximate model giving $\pfpriordensKDE$ and $\pfpriordensaKDE$ respectively.
If $K(q)$ is Lipschitz continuous,
then we have the following bounds on the error in the KDE approximation using the approximate model,
\begin{equation}\label{eq:qoiaKDE_conv_KDE}
\norm{\pfpriordensKDE(q)-\pfpriordensaKDE(q)}_{L^\infty(\dspace)} \leq C \|\qoi - \qoia\|_{L^\infty(\pspace)},
\end{equation}
and
\begin{equation}\label{eq:qoiaKDE_conv_KDE2}
\norm{\pfpriordensKDE(\qoi)-\pfpriordensaKDE(\qoia)}_{L^\infty(\pspace)} \leq C \|\qoi - \qoia\|_{L^\infty(\pspace)}.
\end{equation}
\end{lemma}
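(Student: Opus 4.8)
The plan is to exploit the explicit sum-of-kernels structure in \eqref{eq:kde_def} together with the Lipschitz property of $K$. Let $\lambda_1,\dots,\lambda_\numsamp$ denote the common set of parameter samples used in both estimates, and write $q_i = Q(\lambda_i)$ and $q_i^n = Q_n(\lambda_i)$. Then, directly from \eqref{eq:kde_def},
\begin{equation*}
\pfpriordensKDE(q) - \pfpriordensaKDE(q) = \frac{1}{\numsamp h_\numsamp^{\dspacedim}}\sum_{i=1}^{\numsamp}\left[K\!\left(\frac{q - q_i}{h_\numsamp}\right) - K\!\left(\frac{q - q_i^n}{h_\numsamp}\right)\right].
\end{equation*}
Denoting by $L_K$ the Lipschitz constant of $K$, each summand is bounded in absolute value by $(L_K/h_\numsamp)\abs{q_i - q_i^n} = (L_K/h_\numsamp)\abs{Q(\lambda_i) - Q_n(\lambda_i)} \leq (L_K/h_\numsamp)\norm{\qoi - \qoia}_{L^\infty(\pspace)}$. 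Summing over $i$ and dividing by $\numsamp h_\numsamp^{\dspacedim}$ (the factor $\numsamp$ cancels) and taking the supremum over $q \in \dspace$ yields \eqref{eq:qoiaKDE_conv_KDE} with $C = L_K/h_\numsamp^{\dspacedim+1}$.

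For \eqref{eq:qoiaKDE_conv_KDE2} I would insert the intermediate term $\pfpriordensaKDE(\qoi)$ and apply the triangle inequality, pointwise in $\lambda \in \pspace$,
\begin{equation*}
\abs{\pfpriordensKDE(\qoi) - \pfpriordensaKDE(\qoia)} \leq \abs{\pfpriordensKDE(\qoi) - \pfpriordensaKDE(\qoi)} + \abs{\pfpriordensaKDE(\qoi) - \pfpriordensaKDE(\qoia)}.
\end{equation*}
The first term is controlled by \eqref{eq:qoiaKDE_conv_KDE} evaluated at the point $q = \qoi$. For the second term, observe that $q \mapsto \pfpriordensaKDE(q)$ is itself Lipschitz on $\dspace$ with constant at most $L_K/h_\numsamp^{\dspacedim+1}$, again because it is an average of the Lipschitz functions $q \mapsto K((q - q_i^n)/h_\numsamp)$; hence $\abs{\pfpriordensaKDE(\qoi) - \pfpriordensaKDE(\qoia)} \leq (L_K/h_\numsamp^{\dspacedim+1})\abs{Q(\lambda) - Q_n(\lambda)} \leq (L_K/h_\numsamp^{\dspacedim+1})\norm{\qoi - \qoia}_{L^\infty(\pspace)}$. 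Adding the two contributions gives \eqref{eq:qoiaKDE_conv_KDE2} with $C$ equal to twice the constant from the first bound.

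There is essentially no hard step here, so the main point to be careful about is the bookkeeping of $C$ and the hypotheses behind it. In particular, $C$ depends on the (fixed) bandwidth $h_\numsamp$ and on the dimension $\dspacedim$, so the estimate concerns a fixed sample size $\numsamp$ rather than a joint limit in $\numsamp$ and $n$; and the argument presumes that the \emph{same} parameter samples $\{\lambda_i\}$ underlie both KDEs, so that the discrepancy between them is driven purely by the pointwise errors $\abs{Q(\lambda_i) - Q_n(\lambda_i)}$. This is precisely why the proof is described as straightforward and omitted in the text.
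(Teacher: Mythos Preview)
Your proposal is correct and is exactly the natural argument the paper has in mind; the paper omits the proof entirely as ``straightforward,'' and your direct use of the explicit KDE form \eqref{eq:kde_def} together with the Lipschitz bound on $K$ is the canonical way to obtain both estimates. Your tracking of the constant $C = L_K/h_\numsamp^{\dspacedim+1}$ and the observation that the same parameter samples must underlie both KDEs are precisely the points worth recording.
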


It is important to note that Lemma~\ref{lemma:kde_conv_kde} does not require the same assumptions as Theorem~\ref{thm:kde_error} since
it only shows that for a given set of samples, the KDE approximation using the approximate model converges to the
KDE approximation using the true model.
This is true even if the KDE approximation using the true model does not converge to the true density.

Under the stricter assumptions in Theorem~\ref{thm:kde_error}, i.e., those necessary to prove convergence of the KDE, we prove that the approximation of the push-forward using the KDE and the approximate model converges to the true density
at a rate that depends on both the KDE approximation error as well as the approximate model error.
In Section~\ref{sec:applications}, we give corollaries to this result for specific choices of approximate models.
\begin{theorem}[Convergence of KDE Approximations of Push-Forward Densities]\label{thm:kdepf_convergence}
Assume that a KDE approximation is generated using $\numsamp$ samples of the approximate model.
If $\pfpriordens$, $K(q)$ and the bandwidth parameter satisfy the assumptions in Theorem~\ref{thm:kde_error}, and if $K$ is also Lipschitz continuous,
then we have the following bounds on the error in the KDE approximation using the approximate model,
\begin{equation}\label{eq:qoiaKDE_pf_conv}
\norm{\pfpriordens(q)-\pfpriordensaKDE(q)}_{L^\infty(\dspace)} \leq C \left( \left(\frac{\log \numsamp}{\numsamp}\right)^{\frac{s}{2s+\dspacedim}} + \|\qoi - \qoia\|_{L^\infty(\pspace)} \right),
\end{equation}
and
\begin{equation}\label{eq:qoiaKDE_pf_error}
\norm{\pfpriordens(\qoi)-\pfpriordensaKDE(\qoia)}_{L^\infty(\pspace)} \leq C \left(\left(\frac{\log \numsamp}{\numsamp}\right)^{\frac{s}{2s+\dspacedim}} + \|\qoi - \qoia\|_{L^\infty(\pspace)} \right).
\end{equation}
\end{theorem}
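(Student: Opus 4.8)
The plan is to bound the target error by a triangle inequality that separates the \emph{statistical} error (coming from finite sampling and the kernel density estimator built with the exact model) from the \emph{model} error (the discrepancy between the KDE built with $\qoi$ and the KDE built with $\qoia$), and then to quote Theorem~\ref{thm:kde_error} for the first piece and Lemma~\ref{lemma:kde_conv_kde} for the second. No new analytic machinery is required; the content is entirely in combining the two previously established estimates and checking their hypotheses are simultaneously in force.

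For \eqref{eq:qoiaKDE_pf_conv}, I would first insert the intermediate object $\pfpriordensKDE$, the KDE generated from the same $\numsamp$ parameter samples pushed through the \emph{exact} map $\qoi$, and write
\begin{equation*}
\norm{\pfpriordens(q)-\pfpriordensaKDE(q)}_{L^\infty(\dspace)} \leq \norm{\pfpriordens(q)-\pfpriordensKDE(q)}_{L^\infty(\dspace)} + \norm{\pfpriordensKDE(q)-\pfpriordensaKDE(q)}_{L^\infty(\dspace)}.
\end{equation*}
The hypotheses placed on $\pfpriordens$, on $K$, and on the bandwidth $h_\numsamp$ are exactly those of Theorem~\ref{thm:kde_error}, so the first term is at most $C_1(\log\numsamp/\numsamp)^{s/(2s+\dspacedim)}$. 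Since $K$ is additionally assumed Lipschitz continuous, Lemma~\ref{lemma:kde_conv_kde} applies verbatim and bounds the second term by $C_2\norm{\qoi-\qoia}_{L^\infty(\pspace)}$. Absorbing $C_1,C_2$ into a single constant $C$ gives \eqref{eq:qoiaKDE_pf_conv}.

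For \eqref{eq:qoiaKDE_pf_error} the argument is the same, with one extra bookkeeping step to reconcile the domains of the two sup-norms. Applying the triangle inequality,
\begin{equation*}
\norm{\pfpriordens(\qoi)-\pfpriordensaKDE(\qoia)}_{L^\infty(\pspace)} \leq \norm{\pfpriordens(\qoi)-\pfpriordensKDE(\qoi)}_{L^\infty(\pspace)} + \norm{\pfpriordensKDE(\qoi)-\pfpriordensaKDE(\qoia)}_{L^\infty(\pspace)},
\end{equation*}
I would note that the first term is $\sup_{\lambda\in\pspace}\abs{\pfpriordens(Q(\lambda))-\pfpriordensKDE(Q(\lambda))}$, which is bounded above by $\norm{\pfpriordens(q)-\pfpriordensKDE(q)}_{L^\infty(\dspace)}$ because $Q(\pspace)=\dspace$ — precomposition with $Q$ only restricts the set of values attained — and hence is controlled by Theorem~\ref{thm:kde_error}; the second term is bounded by $C\norm{\qoi-\qoia}_{L^\infty(\pspace)}$ via the second estimate \eqref{eq:qoiaKDE_conv_KDE2} of Lemma~\ref{lemma:kde_conv_kde}. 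Combining constants completes the proof.

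The only real point requiring care — the "obstacle," such as it is — is precisely this last observation: one must pass from a bound on $\dspace$ to a bound on $\pspace$ through composition with $Q$, and it is worth stating explicitly that this is an inequality in the favorable direction (it cannot enlarge the sup-norm) so that the rate from Theorem~\ref{thm:kde_error} transfers without loss. Everything else is routine bounding once the two cited results are lined up.
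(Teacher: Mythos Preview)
Your proposal is correct and follows essentially the same approach as the paper: triangle inequality with the intermediate object $\pfpriordensKDE$, then Theorem~\ref{thm:kde_error} for the statistical piece and Lemma~\ref{lemma:kde_conv_kde} for the model piece. The only cosmetic difference is that for \eqref{eq:qoiaKDE_pf_error} the paper splits into three terms (inserting both $\pfpriordensKDE(\qoi)$ and $\pfpriordensaKDE(\qoi)$) whereas you go directly to two terms by invoking \eqref{eq:qoiaKDE_conv_KDE2}; your version is slightly cleaner but the content is identical.
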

\begin{proof}
First we prove \eqref{eq:qoiaKDE_pf_conv}.  An application of the triangle inequality gives
\[ \norm{\pfpriordens(q)-\pfpriordensaKDE(q)}_{L^\infty(\dspace)} \leq \norm{\pfpriordens(q)-\pfpriordensKDE(q)}_{L^\infty(\dspace)} + \norm{\pfpriordensKDE(q)-\pfpriordensaKDE(q)}_{L^\infty(\dspace)}.\]
The first term is bounded using Theorem~\ref{thm:kde_error} and the second term is bounded using Lemma~\ref{lemma:kde_conv_kde}.
Next, we prove \eqref{eq:qoiaKDE_pf_error}.  Proceeding as before, we apply the triangle inequality twice to obtain
\begin{multline*}
\norm{\pfpriordens(\qoi)-\pfpriordensaKDE(\qoia)}_{L^\infty(\pspace)} \leq \norm{\pfpriordens(\qoi)-\pfpriordensKDE(\qoi)}_{L^\infty(\pspace)} \\ +
\norm{\pfpriordensKDE(\qoi)-\pfpriordensaKDE(\qoi)}_{L^\infty(\pspace)} + \norm{\pfpriordensaKDE(\qoi)-\pfpriordensaKDE(\qoia)}_{L^\infty(\pspace)}.
\end{multline*}
The first term is bounded using Theorem~\ref{thm:kde_error} and the second and third terms are bounded using Lemma~\ref{lemma:kde_conv_kde}.
\end{proof}

\section{Inverse problem analysis}\label{sec:inverse-theory}
In this section we consider the solution of a inverse problem using approximate models.
\subsection{Problem definition}
To facilitate analysis of solutions to the inverse problem presented in the introduction, we first formalize its definition.
\begin{definition}[Inverse Problem and Consistent Measure]\label{def:inverse-problem}
Given a probability measure $\obsmeas$ on $(\dspace, \dborel)$ that is absolutely continuous with respect $\dmeas$ and admits a density $\obsdens$,
the inverse problem is to determine a probability measure $P^\mathrm{post}_\pspace$ on $(\pspace, \pborel)$ that is absolutely continuous with respect to $\pmeas$ and admits a probability density
$\pi^\mathrm{post}_\pspace$,
such that the subsequent push-forward measure induced by the map, $Q(\lambda)$, satisfies
\begin{equation}\label{eq:invdefn}
P^\mathrm{post}_\pspace(Q^{-1}(A)) = P^{Q}_\dspace(A) = \obsmeas(A),
\end{equation}
for any $A\in \dborel$.
We refer to any probability measure $P^\mathrm{post}_\pspace$ that satisfies \eqref{eq:invdefn} as a {\bf consistent} solution to the inverse problem.
\end{definition}

Clearly, the inverse problem may not have a unique solution, i.e., there may be multiple probability measures that push-forward to the observed measure.
This is analogous to a deterministic inverse problem where multiple sets of parameters may produce a fixed observed datum.
A unique solution may be obtained by imposing additional constraints or structure on the inverse problem.
In this paper, such structure is obtained by incorporating prior information to construct a unique
solution to the inverse problem as first proposed in \cite{cbayes}.

\subsection{Solving the inverse problem using exact models}
Given  a {\em prior} probability measure $\priormeas$ on $(\pspace, \pborel)$ that is absolutely continuous with respect to $\pmeas$ and admits a probability density $\priordens$ we make the following assumption to guarantee existence and uniqueness of a solution in terms of a density that is computable with standard density approximation techniques necessary for approximation of $\pfpriordens$.
\begin{assumption}\label{assump:dom}
There exists $C>0$ such that $\obsdens(q)\leq C\pfpriordens(q)$ for a.e.~$q\in\dspace$.
\end{assumption}

Since the observed density and the model are assumed to be fixed, this is only an assumption on the prior.
We sometimes refer to this assumption as the {\em Predictability Assumption} since it implies that any output event with non-zero {\em observed} probability has a non-zero {\em predicted probability} defined by the push-forward of the prior.
This assumption is consistent with the convention in Bayesian methods to choose the prior to be as general as possible because if the prior {\em predicts} that the probability of an event that actually occurs is zero, then even exhaustive sampling of the prior will be insufficient for incorporating data associated with this event into the posterior measure.

Given an appropriate prior, constructing a consistent posterior solution is based on the following result.
\begin{theorem}[Disintegration Theorem \cite{Dellacherie_Meyer}]\label{thm:disintegration}
Assume  $Q:\pspace\to\dspace$ is $\pborel$-measurable, $P_\pspace$ is a probability measure on $(\pspace, \pborel)$ and $P_\dspace$ is the push-forward measure of $P_\pspace$ on $(\dspace, \dborel)$.
There exists a $P_\dspace$-a.e. uniquely defined family of conditional probability measures $\set{P_q}_{q\in\dspace}$ on $(\pspace, \pborel)$ such that for any $A\in\pborel$, 
\begin{equation*}
P_q (A) =  P_q ( A\cap Q^{-1}(q) ) , 
\end{equation*}
so $P_q (\pspace\setminus Q^{-1}(q)) = 0$, and there exists the following disintegration of $P_\pspace$,
\begin{equation}\label{eq:disintegration1}
	P_\pspace(A)  = \int_{\dspace} P_{q}(A) \, dP_\dspace(q) = \int_{\dspace} \bigg(  \int_{A\cap Q^{-1}(q)} \, dP _{q} (\lambda) \bigg) \, dP_\dspace(q),
\end{equation}
for $A \in \mathcal{B}_{\mathbf{\Lambda}}$.
\end{theorem}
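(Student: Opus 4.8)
The statement is classical --- it is the disintegration theorem of \cite{Dellacherie_Meyer} --- so here I only outline how the argument specializes to our setting. The plan is to realize the family $\set{P_q}$ as a regular conditional probability for $P_\pspace$ given the sub-$\sigma$-algebra $\mathcal{G} := Q^{-1}(\dborel)\subseteq\pborel$, and then to push it down from $\pspace$ to $\dspace$ via the Doob--Dynkin lemma. The essential structural input is that $(\pspace,\pborel)$ is a standard Borel space, being a Borel subset of $\mathbb{R}^\pspacedim$ equipped with its Borel $\sigma$-algebra; this guarantees the existence of a map $\lambda\mapsto\nu_\lambda$ into probability measures on $(\pspace,\pborel)$ such that $\lambda\mapsto\nu_\lambda(A)$ is $\mathcal{G}$-measurable for every $A\in\pborel$ and $\int_G \nu_\lambda(A)\,dP_\pspace(\lambda) = P_\pspace(A\cap G)$ for all $A\in\pborel$ and $G\in\mathcal{G}$.

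First I would fix a countable algebra $\mathcal{A}_0\subseteq\pborel$ generating $\pborel$ and a countable base $\set{U_j}$ for the topology of $\dspace$. Because $\nu_\lambda(A)$ is $\mathcal{G}$-measurable, the Doob--Dynkin lemma furnishes, for each $A\in\pborel$, a $\dborel$-measurable $g_A:\dspace\to[0,1]$ with $\nu_\lambda(A)=g_A(Q(\lambda))$ for $P_\pspace$-a.e.\ $\lambda$. Setting $P_q(A):=g_A(q)$ and removing a single $P_\dspace$-null set of $q$ that works simultaneously for all $A\in\mathcal{A}_0$, one checks that the restriction of $q\mapsto P_q$ to $\mathcal{A}_0$ is, for $P_\dspace$-a.e.\ $q$, a finitely additive set function of total mass one that is moreover continuous from above at $\emptyset$; the latter is precisely where the standard Borel (equivalently, compact metric) structure of $(\pspace,\pborel)$ is used, as it prevents escape of mass off a null set. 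Carath\'eodory extension then upgrades each such $P_q$ to a genuine probability measure on $\pborel$.

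Next I would establish the concentration property. For a fixed $G=Q^{-1}(B)\in\mathcal{G}$, testing the defining identity against sets of $\mathcal{G}$ shows $\nu_\lambda(G)=\mathbf{1}_G(\lambda)$ for $P_\pspace$-a.e.\ $\lambda$, hence $P_q\big(Q^{-1}(B)\big)=\mathbf{1}_B(q)$ for $P_\dspace$-a.e.\ $q$. Applying this to the countably many sets $B=\dspace\setminus U_j$ and intersecting the resulting conull sets, we obtain one $P_\dspace$-conull set of $q$ on which $P_q\big(Q^{-1}(\dspace\setminus U_j)\big)=0$ for every $j$ with $q\in U_j$; since $\set{U_j}$ is a base of a metric topology, $\dspace\setminus\{q\}=\bigcup_{j:\,q\in U_j}(\dspace\setminus U_j)$, so $P_q\big(Q^{-1}(\dspace\setminus\{q\})\big)=0$ by countable subadditivity, i.e.\ $P_q$ is carried by $Q^{-1}(q)$, which is equivalent to $P_q(A)=P_q(A\cap Q^{-1}(q))$ for all $A\in\pborel$.

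Finally, the disintegration formula \eqref{eq:disintegration1} is the defining identity $\int_G\nu_\lambda(A)\,dP_\pspace = P_\pspace(A\cap G)$ with $G=\pspace$, rewritten through the change-of-variables formula for $P_\dspace=P_\pspace\circ Q^{-1}$, and $P_\dspace$-a.e.\ uniqueness follows because any two families with these properties agree $P_\dspace$-a.e.\ on the countable generating algebra $\mathcal{A}_0$ (using the concentration property to pass to the fibrewise identity) and therefore, by a monotone class argument, coincide as measures for $P_\dspace$-a.e.\ $q$. I expect the main obstacle to be the second step --- the simultaneous, for $P_\dspace$-a.e.\ $q$, upgrade of the finitely additive assignment $A\mapsto g_A(q)$ to a countably additive probability measure --- since this is the only place that genuinely needs the underlying space to be standard Borel rather than merely a measure space.
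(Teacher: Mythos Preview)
Your outline is a correct sketch of the classical regular-conditional-probability construction of the disintegration, and the steps you flag (Doob--Dynkin descent, countable generating algebra, Carath\'eodory extension using the standard Borel structure, and the concentration argument via a countable base) are the right ones. However, there is nothing to compare against: in the paper this theorem is quoted from \cite{Dellacherie_Meyer} and is not proved; it is used as a black box to derive the consistent posterior in Theorem~\ref{thm:consistent_posterior}. So your proposal goes well beyond what the paper does, which is simply to cite the result.
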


Assumption~\ref{assump:dom} automatically gives that $\obsmeas$ is absolutely continuous with respect to $\dmeas$.
Thus, writing $dP_\dspace(q) = \obsdens(q)\, d\dmeas(q)$ and using the prior to define the conditionals densities in the iterated integral \eqref{eq:disintegration1} we arrive at the following theorem.
\begin{theorem}[Existence and Uniqueness~\cite{cbayes}]\label{thm:consistent_posterior}
The probability measure $\postmeas$ on $(\pspace, \pborel)$ defined by
\begin{equation}\label{eq:consistent_posterior}
	\postmeas(A) = \int_{\dspace} \bigg(  \int_{A\cap Q^{-1}(q)} \, \priordens(\lambda)\frac{\obsdens(Q(\lambda))}{\pfpriordens(Q(\lambda))} d\mu_{\pspace,q}(\lambda) \bigg) \, d\dmeas(q), \ \forall A\in\pborel
\end{equation}is a consistent solution to the inverse problem in the sense of \eqref{eq:invdefn} and is uniquely determined for a given prior probability measure $\priormeas$ on $(\pspace,\pborel)$.
\end{theorem}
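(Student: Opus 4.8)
The plan is to verify directly that the measure $\postmeas$ defined by \eqref{eq:consistent_posterior} is (i) a well-defined probability measure, (ii) absolutely continuous with respect to $\pmeas$ with the density read off from \eqref{eq:consistent_posterior}, (iii) consistent in the sense of \eqref{eq:invdefn}, and (iv) the unique such measure given the prior. The essential tool is the Disintegration Theorem (Theorem~\ref{thm:disintegration}) applied to the \emph{prior} measure $\priormeas$, which yields a $\pfpriormeas$-a.e.\ unique family of conditionals $\set{\mu_{\pspace,q}}$ supported on the fibers $Q^{-1}(q)$ together with the disintegration $\priormeas(A)=\int_\dspace \mu_{\pspace,q}(A)\,d\pfpriormeas(q)$. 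The definition \eqref{eq:consistent_posterior} is obtained from this by reweighting each fiber-conditional by the factor $\obsdens(Q(\lambda))/\pfpriordens(Q(\lambda))$ and replacing the integrating measure $\pfpriormeas$ on $\dspace$ by $\obsmeas$; Assumption~\ref{assump:dom} is precisely what makes this reweighting legitimate.

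First I would check well-definedness: Assumption~\ref{assump:dom} gives $\obsdens \leq C\,\pfpriordens$ a.e.\ on $\dspace$, so the ratio $\obsdens(q)/\pfpriordens(q)$ is defined and bounded by $C$ for $\pfpriormeas$-a.e.\ $q$ (on the null set where $\pfpriordens$ vanishes, $\obsdens$ vanishes too, and we set the ratio to zero). In particular the inner integrand in \eqref{eq:consistent_posterior} is nonnegative and measurable, so the iterated integral makes sense and defines a nonnegative set function that is countably additive by monotone convergence. To see it is a probability measure, take $A=\pspace$: the inner integral becomes $\int_{Q^{-1}(q)} \priordens(\lambda)\,\frac{\obsdens(q)}{\pfpriordens(q)}\,d\mu_{\pspace,q}(\lambda) = \frac{\obsdens(q)}{\pfpriordens(q)}\,\mu_{\pspace,q}(Q^{-1}(q))\,\pfpriordens(q)$ — more carefully, one uses that the prior's own disintegration writes $\pfpriordens(q) = \int_{Q^{-1}(q)} \priordens(\lambda)\,d\mu_{\pspace,q}(\lambda)$ in the appropriate sense — so the inner integral collapses to $\obsdens(q)$ and the outer integral is $\int_\dspace \obsdens(q)\,d\dmeas(q)=1$. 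The absolute continuity with respect to $\pmeas$ and the identification of $\postdens$ follow because $\postmeas$ is obtained from $\priormeas$ by multiplying the disintegrated density by a bounded factor, so any $\pmeas$-null set is $\postmeas$-null.

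Next I would verify consistency. Fix $A=Q^{-1}(B)$ for $B\in\dborel$. Because each conditional $\mu_{\pspace,q}$ is concentrated on $Q^{-1}(q)$, the set $Q^{-1}(B)\cap Q^{-1}(q)$ equals $Q^{-1}(q)$ when $q\in B$ and is empty otherwise; hence the outer integral in \eqref{eq:consistent_posterior} restricts to $B$, and the inner integral over the full fiber again collapses (as above) to $\obsdens(q)$. Thus $\postmeas(Q^{-1}(B)) = \int_B \obsdens(q)\,d\dmeas(q) = \obsmeas(B)$, which is exactly \eqref{eq:invdefn}. Finally, uniqueness: suppose $P$ is any consistent solution that is absolutely continuous with respect to $\pmeas$ and whose disintegration along $Q$ has the \emph{same} conditionals as the prior on each fiber (this is the ``structure'' imposed by the construction in \cite{cbayes} — the posterior is required to inherit the prior's conditional structure, only the marginal on $\dspace$ is updated to $\obsmeas$). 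Then by the $\pfpriormeas$-a.e.\ uniqueness clause of Theorem~\ref{thm:disintegration} applied to $P$ (whose push-forward is $\obsmeas$, and $\obsmeas \ll \pfpriormeas$ by Assumption~\ref{assump:dom}), the disintegration of $P$ must agree with \eqref{eq:consistent_posterior} up to $\obsmeas$-null sets in $\dspace$, forcing $P=\postmeas$.

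I expect the main obstacle to be the bookkeeping around the two disintegrations and the precise sense in which the fiber integral of $\priordens$ against $\mu_{\pspace,q}$ reproduces $\pfpriordens(q)$ — i.e.\ making rigorous the ``collapse'' step used three times above. This requires care because $\mu_{\pspace,q}$ is only $\pfpriormeas$-a.e.\ defined and the fibers $Q^{-1}(q)$ typically have $\pmeas$-measure zero, so one cannot naively write $\priordens$ as a density on the fiber; the clean route is to note that the map $A \mapsto \int_\dspace \big(\int_{A\cap Q^{-1}(q)} \priordens\,d\mu_{\pspace,q}\big)\,d\pfpriormeas(q)$ is, by Theorem~\ref{thm:disintegration} applied to $\priormeas$ itself, simply $\priormeas(A)$, and then \eqref{eq:consistent_posterior} differs only by the bounded Radon--Nikodym factor $\obsdens/\pfpriordens$ pulled back through $Q$ and the change of integrating measure $d\pfpriormeas = \pfpriordens\,d\dmeas \rightsquigarrow \obsdens\,d\dmeas$. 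A secondary subtlety is stating the uniqueness hypothesis correctly: without the requirement that the posterior share the prior's conditional structure, Definition~\ref{def:inverse-problem} admits many consistent solutions, so the uniqueness claim must be read as uniqueness \emph{within the class constructed from the given prior}, and I would make that explicit.
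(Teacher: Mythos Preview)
The paper does not supply a detailed proof of this theorem; it states the result with a citation to \cite{cbayes} and offers only a one-sentence motivation preceding the statement --- namely, that one writes $dP_\dspace(q)=\obsdens(q)\,d\dmeas(q)$ and uses the prior to define the conditional densities in the disintegration \eqref{eq:disintegration1}. Your proposal is exactly this: apply Theorem~\ref{thm:disintegration} to the prior $\priormeas$, retain the resulting fiber conditionals, and replace the marginal on $\dspace$ by $\obsmeas$ via the bounded Radon--Nikodym factor $\obsdens/\pfpriordens$ (legitimized by Assumption~\ref{assump:dom}). So your approach is correct and coincides with what the paper sketches; your explicit identification of the uniqueness caveat --- that uniqueness holds only within the class of posteriors inheriting the prior's conditional structure on fibers --- is a useful clarification that the paper leaves implicit.
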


The probability density of the consistent solution is given by
\begin{equation}\label{eq:postpdf}
\postdens(\lambda) = \priordens(\lambda)\frac{\obsdens(Q(\lambda))}{\pfpriordens(Q(\lambda))}, \quad \lambda \in \Lambda.
\end{equation}
Each of the terms in \eqref{eq:postpdf} has a particular statistical interpretation and we refer the interested reader to~\cite{cbayes}
for a detailed discussion.
Computing the posterior density~\eqref{eq:postpdf} only requires the construction of the push-forward of the prior $\pfpriordens$ since the prior and the observed densities are assumed {\em a priori}.

\subsection{Solving the inverse problem using approximate models and finite sampling}

When using approximate models to solve the inverse problem, we must make the following assumption, which is analogous to Assumption~\ref{assump:dom}, to guarantee existence and uniqueness of approximate solutions.
\begin{assumption}\label{assump:approxdom}
There exists $C>0$ such that $\obsdens(q)\leq C\pfpriordensa(q)$ for a.e.~$q\in\dspace$ .
\end{assumption}

Violation of Assumption~\ref{assump:approxdom} implies that for the chosen $\priordens$ the approximate forward map given by $\qoia$ cannot predict the observed data.


Recalling the formal expression for $\postdens$ given by \eqref{eq:postpdf}, we define
\begin{equation}\label{eq:postpdfa}
\postdensa(\lambda) = \priordens(\lambda)\frac{\obsdens(\qoia)}{\pfpriordensa(\qoia)} = \priordens(\lambda)\ra, \quad \text{with} \ \ra = \frac{\obsdens(\qoia)}{\pfpriordensa(\qoia)}.
\end{equation}
The error in the total variation metric of the approximate posterior pdf is given by
\begin{equation}\label{eq:pdf_error}
	\norm{\postdensa(\lambda) - \postdens(\lambda)}_{L^1(\pspace)}  = \int_\pspace \priordens(\lambda)\abs{\ra - r(\lambda)}\, d\pmeas.
\end{equation}
Since $\postdensa(\lambda)$ and $\postdens(\lambda)$ are both in $L^1(\pspace)$, we have that $\priordens(\lambda)\abs{\ra - r(\lambda)}$ is also in $L^1(\pspace)$.
By a standard result in measure theory, for each $\eta\in(0,1)>0$ there exists compact $\pspace_\eta\subset\pspace$ such that 
\begin{equation*}
	\norm{\postdensa(\lambda) - \postdens(\lambda)}_{L^1(\pspace\backslash \pspace_\eta)} < \eta. 
\end{equation*}
This immediately implies that both 
\begin{equation*}
	\int_{\pspace_\eta} \postdens(\lambda)\, d\pmeas \geq 1-\eta, \text{ and } \int_{\pspace_\eta}\postdensa(\lambda)\, d\pmeas \geq 1-\eta.
\end{equation*}
Thus, we can rewrite the error shown in \eqref{eq:pdf_error} as the sum of two integrals. The first integral is over the compact set $\pspace_\eta$ containing ``most of the probability'' for either the exact or approximate posterior probability measure. The second integral is over the (potentially unbounded) $\pspace\backslash\pspace_\eta$, where the probabilities of both the exact and approximate posterior probabilities are less than $\eta$.

To simplify the analysis to follow, we assume that $\pspace$ is precompact (i.e., bounded in $\mathbb{R}^\pspacedim$), which subsequently implies $\dspace$ has finite $\dmeas$-measure if the QoI map is piecewise smooth and bounded.
If $\pspace$ is not precompact, then we simply note that the analysis we provide can be used to prove convergence of the approximate posterior pdf on any compact subset of $\pspace$.
In Section~\ref{sec:conclusions}, we briefly discuss how it may also be possible to replace $\pmeas$, and subsequently $\dmeas$, by finite measures dominating the probability measures defined on $(\pspace,\pborel)$ and $(\dspace,\dborel)$, respectively to arrive at similar theoretical results on $\pspace$ that are not precompact.


\begin{theorem}[Convergence of Posterior Densities]\label{thm:posterior_convergence} 
Suppose Assumptions~\ref{assump:pfprior},~\ref{assump:surrogate},~\ref{assump:dom} and~\ref{assump:approxdom} hold, $\obsdens$ is a Lipschitz continuous function on $\dspace$, and $(\qoia)$ is a sequence of approximations to $\qoi$ such that $\qoia\to \qoi$ in $L^\infty(\pspace)$ as $n\to\infty$.
Then, for any $\epsilon>0$ and precompact $\pspace$, there exists $N$ such that $n>N$ implies
\begin{equation}\label{eq:post_conv}
	\norm{\postdensa(\lambda) - \postdens(\lambda)}_{L^1(\pspace)} < \epsilon,
\end{equation}
where $\postdensa$ is the approximate posterior density obtained using $\qoia$ and its associated push-forward of the prior density denoted by $\pfpriordensa$. 
\end{theorem}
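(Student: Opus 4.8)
The plan is to bound the $L^1$ error in \eqref{eq:pdf_error} by splitting $\pspace$ into the compact ``bulk'' set $\pspace_\eta$ and the tail $\pspace\backslash\pspace_\eta$, then controlling the difference of the likelihood ratios $|\ra - r(\lambda)|$ on the bulk via the convergence already established for push-forward densities in Theorem~\ref{thm:pf_convergence}. Concretely, given $\epsilon>0$, first choose $\eta$ small (say $\eta < \epsilon/4$ after accounting for the constant factors below) and extract the compact set $\pspace_\eta$ as in the discussion preceding the theorem, so that the tail integral of $\priordens|\ra-r|$ is automatically at most $2\eta$ (since $\postdensa$ and $\postdens$ each have mass $<\eta$ there). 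On $\pspace_\eta$ I would write
\[
  \priordens(\lambda)\,\bigl|\ra - r(\lambda)\bigr|
  = \priordens(\lambda)\,\left|
    \frac{\obsdens(\qoia)}{\pfpriordensa(\qoia)}
    - \frac{\obsdens(\qoi)}{\pfpriordens(\qoi)}
  \right|,
\]
and estimate this pointwise. Integrating over $\pspace_\eta$, which has finite $\pmeas$-measure by precompactness, converts an $L^\infty$ bound on the integrand into an $L^1$ bound with a harmless constant.

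The core estimate is the pointwise bound on the ratio difference. I would add and subtract $\obsdens(\qoi)/\pfpriordensa(\qoia)$ and split into two pieces:
\[
  \left|\frac{\obsdens(\qoia)-\obsdens(\qoi)}{\pfpriordensa(\qoia)}\right|
  + \left|\obsdens(\qoi)\right|\,
    \left|\frac{1}{\pfpriordensa(\qoia)} - \frac{1}{\pfpriordens(\qoi)}\right|.
\]
For the first piece, Lipschitz continuity of $\obsdens$ gives $|\obsdens(\qoia)-\obsdens(\qoi)| \le L\,\|\qoia-\qoi\|_{L^\infty(\pspace)}$, which $\to 0$ by hypothesis; the denominator is bounded below because Assumption~\ref{assump:approxdom} forces $\pfpriordensa(\qoia) \ge \obsdens(\qoia)/C$, and on $\pspace_\eta$ one needs $\obsdens$ bounded away from zero — this is where I would restrict attention, if necessary, to the part of $\pspace_\eta$ where $\obsdens(\qoi) > 0$, noting the integrand vanishes identically where $\obsdens(\qoi)=0$ (and using Lipschitz continuity of $\obsdens$ together with the lower bound on $\pfpriordensa$ to handle a neighborhood of that zero set). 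For the second piece, combine over a common denominator:
\[
  \left|\frac{\pfpriordens(\qoi) - \pfpriordensa(\qoia)}{\pfpriordensa(\qoia)\,\pfpriordens(\qoi)}\right|,
\]
bound the numerator by \eqref{eq:qoia_pf_error} from Theorem~\ref{thm:pf_convergence}, and bound the denominator below again using Assumptions~\ref{assump:dom} and~\ref{assump:approxdom}. Finally use $\obsdens(\qoi) \le C\,\pfpriordens(\qoi)$ (Assumption~\ref{assump:dom}) to cancel one factor, leaving $\priordens(\lambda)\,|r(\lambda) - \ra|$ controlled by a constant times $\|\qoia-\qoi\|_{L^\infty(\pspace)} + \|\pfpriordens(\qoi)-\pfpriordensa(\qoia)\|_{L^\infty(\pspace)}$, both of which tend to $0$. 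Choosing $n$ large makes the bulk contribution $<\epsilon/2$ and, combined with the $<\epsilon/2$ tail bound, yields \eqref{eq:post_conv}.

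The main obstacle is the denominator: $\pfpriordens(\qoi)$ and $\pfpriordensa(\qoia)$ need uniform positive lower bounds on the region of integration to turn the numerator estimates into a usable bound, and a priori a push-forward density can be arbitrarily small. The resolution is that the integrand carries a factor of $\obsdens(\qoi)$ (or $\obsdens(\qoia)$), and wherever that factor is non-negligible, Assumptions~\ref{assump:dom} and~\ref{assump:approxdom} supply exactly the comparison $\pfpriordens \gtrsim \obsdens$ and $\pfpriordensa \gtrsim \obsdens$ needed; wherever $\obsdens$ is small the numerator is correspondingly small by Lipschitz continuity, so the ratio of small-over-small stays bounded. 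Making this dichotomy rigorous — i.e., showing that the ``predictability'' assumptions genuinely prevent the ratios from blowing up rather than merely guaranteeing they are defined a.e. — is the delicate part; everything else is triangle inequalities, the finite measure of $\pspace_\eta$, and invoking Theorem~\ref{thm:pf_convergence}.
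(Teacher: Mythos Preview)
Your decomposition of $|\ra - r(\lambda)|$ into the two pieces is exactly the paper's decomposition (their terms $\mathbf{I}$ and $\mathbf{II}$ in \eqref{eq:ratio_diff_decomp}). The divergence is in how the \emph{integrals} of these pieces are bounded. You attempt a pointwise $L^\infty$ bound on each term, which forces you to confront the lack of a uniform lower bound on $\pfpriordensa(\qoia)$; you correctly identify this as the main obstacle, but your proposed resolution does not close the gap. The claim that ``the integrand vanishes identically where $\obsdens(\qoi)=0$'' is false: $r(\lambda)=0$ there, but $r_n(\lambda)=\obsdens(\qoia)/\pfpriordensa(\qoia)$ need not vanish. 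And where $\obsdens$ is small, the numerators of $\mathbf{I}$ and $\mathbf{II}$ are indeed small, but Assumptions~\ref{assump:dom} and~\ref{assump:approxdom} only yield $\pfpriordensa \gtrsim \obsdens$, so the denominator can be \emph{equally} small --- ``small over small'' is at best bounded (which you already know, since $r,r_n\le C$), not converging to zero. Your dichotomy therefore shows $|r_n-r|$ is uniformly bounded, which is not the issue; it does not show it is uniformly small.

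The paper sidesteps the lower-bound problem entirely by never seeking an $L^\infty$ bound on $\mathbf{I}$ or $\mathbf{II}$. Instead it applies the disintegration theorem with respect to the approximate map $Q_n$ and integrates $\priordens(\lambda)\,\mathbf{I}$ fiberwise: on each fiber $Q_n^{-1}(q)$, the factor $\priordens(\lambda)/\pfpriordensa(\qoia)$ is precisely the \emph{conditional} density of the prior given $Q_n=q$, so it integrates to $1$. H\"older's inequality on each fiber then pulls out $\|\obsdens(\qoia)-\obsdens(\qoi)\|_{L^\infty}$ and the troublesome $1/\pfpriordensa$ vanishes, leaving a bound of $C\,\dmeas(\dspace)\,\|\qoia-\qoi\|_{L^\infty(\pspace)}$. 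The same trick handles $\mathbf{II}$ after first using $\obsdens(\qoi)/\pfpriordens(\qoi)\le C$ from Assumption~\ref{assump:dom}, reducing to $C\,\dmeas(\dspace)\,\|\pfpriordens(\qoi)-\pfpriordensa(\qoia)\|_{L^\infty(\pspace)}$, which is controlled by Theorem~\ref{thm:pf_convergence}. This is the missing idea: absorb the inverse push-forward into the conditional density rather than trying to bound it below. (Also note that the $\pspace_\eta$ splitting you set up is only pre-theorem motivation; under the precompactness hypothesis the paper's proof does not use it.)
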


\begin{proof}
Let $\qoia$ be any of the approximations from $(\qoia)$. 
We find it convenient to apply the disintegration theorem using the map $\qoia$ to rewrite \eqref{eq:pdf_error} as
\begin{equation}
	\norm{\postdensa(\lambda) - \postdens(\lambda)}_{L^1(\pspace)} = \int_\dspace \int_{\pspace\cap Q_n^{-1}(q)} \priordens(\lambda)\abs{\ra - r(\lambda)}\, d\mu_{\pspace,q} \, d\dmeas.
\end{equation}
Observe that the difference in ratios given by $\abs{\ra-r(\lambda)}$ can be rewritten as
\begin{equation*}
	\abs{\ra-r(\lambda)} = \abs{\frac{\obsdens(\qoia)\pfpriordens(\qoi) - \obsdens(\qoi)\pfpriordensa(\qoia)}{\pfpriordensa(\qoia)\pfpriordens(\qoi)}}.
\end{equation*}
Then, by adding and subtracting $\obsdens(\qoi)\pfpriordens(\qoi)$ in the numerator, this difference can be decomposed as
\begin{equation}\label{eq:ratio_diff_decomp}
	\abs{\ra-r(\lambda)} \leq  \underbrace{\abs{\frac{\obsdens(\qoia)-\obsdens(\qoi)}{\pfpriordensa(\qoia)}}}_{\mbf{I}} + \underbrace{\abs{\frac{\obsdens(\qoi)\left[\pfpriordens(\qoi)-\pfpriordensa(\qoia)\right]}{\pfpriordensa(\qoia)\pfpriordens(\qoi)}}}_{\mbf{II}}.
\end{equation}

Let $\epsilon>0$ be given. 
We now prove there exists $N$ such that if $\qoia$ is chosen from $(\qoia)_{n>N}$, then
\begin{equation*}
	\int_\dspace \int_{\pspace\cap Q_n^{-1}(q)} \priordens(\lambda)\mbf{I}\, d\mu_{\pspace,q} \, d\dmeas < \epsilon/2, \text{ and } \int_\dspace \int_{\pspace\cap Q_n^{-1}(q)} \priordens(\lambda)\mbf{II}\, d\mu_{\pspace,q} \, d\dmeas<\epsilon/2.
\end{equation*} 

Since $\obsdens$ is assumed to be Lipschitz continuous with Lipschitz constant $C\geq 0$, then
\begin{equation}
	\mbf{I} \leq \frac{C\abs{\qoi-\qoia}}{\pfpriordensa(\qoia)}.
\end{equation}
H\"older's inequality then implies 
\begin{multline*}
	\int_\dspace\int_{\pspace\cap Q_n^{-1}(q)} \priordens(\lambda)\mbf{I} \, d\mu_{\pspace,q}(\lambda)\, d\dmeas(q) \leq \\ C \int_\dspace \norm{\qoi-\qoia}_{L^\infty(\pspace\cap Q_n^{-1}(q))}\norm{\frac{\priordens(\lambda)}{\pfpriordensa(\qoia)}}_{L^1(\pspace\cap Q_n^{-1}(q))}\, d\dmeas(q)
\end{multline*}
By the disintegration theorem, for a.e.~$q\in\dspace$, $\priordens(\lambda)/\pfpriordensa(\qoia)$ is a conditional density on $\pspace\cap Q_n^{-1}(q)$, i.e.,
\begin{equation*}
	\norm{\frac{\priordens(\lambda)}{\pfpriordensa(\qoia)}}_{L^1(\pspace\cap Q_n^{-1}(q))} = 1  \text{ for a.e. } q\in\dspace.
\end{equation*}
It follows that
\begin{equation}
	\int_\dspace \int_{\pspace\cap Q_n^{-1}(q)} \priordens(\lambda)\mbf{I} \, d\mu_{\pspace,q}(\lambda)\, d\dmeas(q) \leq C \dmeas(\dspace)\norm{\qoi-\qoia}_{L^\infty(\pspace)}.
\end{equation}
Recall the assumption of precompactness of $\pspace$ implies $\dmeas(\dspace)<\infty$.
Then, since $\qoia\to \qoi$ in $L^\infty(\pspace)$, we have that this bound can be made smaller than $\epsilon/2$ by choosing $n$ sufficiently large.


We now choose a new $C>0$ using Assumption~\ref{assump:dom} such that for a.e.~$q\in \dspace$,
\begin{equation*}
	\frac{\obsdens(q)}{\pfpriordens(q)} \leq C. 
\end{equation*}
Using H\"older's inequality and the disintegration theorem as before, we have that
\begin{equation*}
	\int_\dspace\int_{\pspace\cap Q_n^{-1}(q)} \priordens(\lambda)\mbf{II} \, d\mu_{\pspace,q}(\lambda)\, d\dmeas(q) \leq C\dmeas(\dspace)\norm{\pfpriordens(\qoi)-\pfpriordensa(\qoia)}_{L^\infty(\pspace)}.
\end{equation*}
By Theorem~\ref{thm:pf_convergence}, this term can also be bounded by $\epsilon/2$ by choosing $n$ sufficiently large, which completes the proof. 
\end{proof}

Next, we consider the utilization of a KDE to approximate the push-forward of the prior and assess how this affects
the approximation of the posterior.
We define
\begin{equation}\label{eq:postpdfaKDE}
\postdensaKDE(\lambda) = \priordens(\lambda)\frac{\obsdens(\qoia)}{\pfpriordensaKDE(\qoia)} = \priordens(\lambda) \hat{r}_n(\lambda), \quad \text{ with }
\hat{r}_n(\lambda) = \frac{\obsdens(\qoia)}{\pfpriordensaKDE(\qoia)},
\end{equation}
which represents the approximation of the posterior using the approximate model and a KDE approximation of the push-forward
of the prior through the approximate model.
We emphasize that $\postdensaKDE$ {\em is not} a KDE approximation of the posterior.
As in previous sections, we need to make a predictability assumption on the KDE approximation
of the push-forward of the prior through the approximate model.
\begin{assumption}\label{assump:approxdomKDE}
There exists $\hat{C}>0$ such that $\obsdens(q)\leq \hat{C}\pfpriordensaKDE(q)$ for a.e.~$q\in\dspace$ .
\end{assumption}

Violation of Assumption~\ref{assump:approxdomKDE} implies that for the chosen $\priordens$ the KDE approximation constructed using the approximate forward map
given by $\qoia$ cannot predict the observed data.
The only difference between Assumption~\ref{assump:approxdom} and Assumption~\ref{assump:approxdomKDE} is the incorporation of the kernel density approximation, so we would expect these assumptions to be roughly equivalent for large $M$.

Under the strict assumptions on $\pfpriordens$ required for Theorem~\ref{thm:kde_error}, we can prove the following theorem
involving the rate of convergence of $\postdensaKDE$ to $\postdens$.

\begin{theorem}[Convergence of Posterior Densities with KDE Approximation]\label{thm:posterior_convergenceKDE}
Assume that a KDE approximation is generated using $\numsamp$ samples of the approximate model.
Suppose Assumptions~\ref{assump:pfprior},~\ref{assump:surrogate},~\ref{assump:dom},~\ref{assump:approxdom} and~\ref{assump:approxdomKDE} and the assumptions in Theorem~\ref{thm:kde_error} hold.
Furthermore, assume $\pspace$ is precompact, $\obsdens$ is a Lipschitz continuous function on $\dspace$, and $(\qoia)$ is a sequence of approximations to $\qoi$ such that $\qoia\to \qoi$ in $L^\infty(\pspace)$ as $n\to\infty$.
Then,
\begin{equation}\label{eq:post_convKDE}
\norm{\postdensaKDE(\lambda) - \postdens(\lambda)}_{L^1(\pspace)} \leq C \left(\left(\frac{\log \numsamp}{\numsamp}\right)^{\frac{s}{2s+\dspacedim}} + \|\qoi - \qoia\|_{L^\infty(\pspace)} \right).
\end{equation}
\end{theorem}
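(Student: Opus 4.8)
The plan is to bound $\norm{\postdensaKDE(\lambda)-\postdens(\lambda)}_{L^1(\pspace)}$ directly, in the spirit of the proof of Theorem~\ref{thm:posterior_convergence}, but arranging the estimate so that the kernel density estimate $\pfpriordensaKDE$ never stands alone in a denominator; this is what lets the \emph{quantitative} bound \eqref{eq:qoiaKDE_pf_error} of Theorem~\ref{thm:kdepf_convergence} be used in place of the rate-free Theorem~\ref{thm:pf_convergence}. In analogy with \eqref{eq:pdf_error} I would write
\[ \norm{\postdensaKDE(\lambda)-\postdens(\lambda)}_{L^1(\pspace)} = \int_\pspace \priordens(\lambda)\,\abs{\hat{r}_n(\lambda)-r(\lambda)}\, d\pmeas , \]
with $\hat{r}_n(\lambda)=\obsdens(\qoia)/\pfpriordensaKDE(\qoia)$ and $r(\lambda)=\obsdens(\qoi)/\pfpriordens(\qoi)$. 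The stated hypotheses ensure that $\postdens$, $\postdensa$ and $\postdensaKDE$ are all genuine probability densities and that Theorem~\ref{thm:kdepf_convergence} is applicable.

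The key step is a decomposition of $\abs{\hat{r}_n(\lambda)-r(\lambda)}$ that differs from \eqref{eq:ratio_diff_decomp}: rather than adding and subtracting $\obsdens(\qoi)\pfpriordens(\qoi)$ in the numerator, I would add and subtract the term $\obsdens(\qoia)/\pfpriordens(\qoi)$, obtaining
\[ \abs{\hat{r}_n(\lambda)-r(\lambda)} \;\le\; \underbrace{\frac{\abs{\obsdens(\qoia)-\obsdens(\qoi)}}{\pfpriordens(\qoi)}}_{\mathbf{I}} \;+\; \underbrace{\frac{\obsdens(\qoia)}{\pfpriordensaKDE(\qoia)}\,\frac{\abs{\pfpriordens(\qoi)-\pfpriordensaKDE(\qoia)}}{\pfpriordens(\qoi)}}_{\mathbf{II}} . \]
Lipschitz continuity of $\obsdens$ bounds the numerator of $\mathbf{I}$ by $C\abs{\qoi-\qoia}$, while Assumption~\ref{assump:approxdomKDE} bounds the leading factor of $\mathbf{II}$ by $\hat{C}$ for a.e.\ $\lambda$ (the a.e.-$q$ statement transfers to an a.e.-$\lambda$ one because $\pfpriordensa$ is a density with respect to $\dmeas$, so preimages under $Q_n$ of $\dmeas$-null sets carry no prior mass). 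Hence
\[ \abs{\hat{r}_n(\lambda)-r(\lambda)} \;\le\; C\,\frac{\abs{\qoi-\qoia}}{\pfpriordens(\qoi)} \;+\; \hat{C}\,\frac{\abs{\pfpriordens(\qoi)-\pfpriordensaKDE(\qoia)}}{\pfpriordens(\qoi)} . \]

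Next I would multiply by $\priordens(\lambda)$, integrate over $\pspace$, and disintegrate the prior with respect to the \emph{exact} map $Q$ (Theorem~\ref{thm:disintegration}): on each fiber $\pspace\cap Q^{-1}(q)$ one has $\int\priordens\,d\mu_{\pspace,q}=\pfpriordens(q)$, and on that fiber the remaining factors $\abs{\qoi-\qoia}/\pfpriordens(\qoi)$ and $\abs{\pfpriordens(\qoi)-\pfpriordensaKDE(\qoia)}/\pfpriordens(\qoi)$ are at most $\norm{\qoi-\qoia}_{L^\infty(\pspace)}/\pfpriordens(q)$ and $\norm{\pfpriordens(\qoi)-\pfpriordensaKDE(\qoia)}_{L^\infty(\pspace)}/\pfpriordens(q)$, so the $1/\pfpriordens(q)$ cancels and the outer $\dmeas$-integral contributes $\dmeas(\dspace)$, which is finite because $\pspace$ is precompact. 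This yields
\[ \norm{\postdensaKDE(\lambda)-\postdens(\lambda)}_{L^1(\pspace)} \;\le\; \dmeas(\dspace)\Big( C\,\norm{\qoi-\qoia}_{L^\infty(\pspace)} + \hat{C}\,\norm{\pfpriordens(\qoi)-\pfpriordensaKDE(\qoia)}_{L^\infty(\pspace)} \Big) , \]
and invoking \eqref{eq:qoiaKDE_pf_error} to bound the second norm by $C\bigl((\log\numsamp/\numsamp)^{s/(2s+\dspacedim)}+\norm{\qoi-\qoia}_{L^\infty(\pspace)}\bigr)$ and collecting constants gives \eqref{eq:post_convKDE}.

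I expect the only genuine difficulty to be precisely this bookkeeping of denominators. One cannot simply pass through $\postdensa$ by the triangle inequality, since Theorem~\ref{thm:posterior_convergence} furnishes only convergence, not a rate, for $\norm{\postdensa-\postdens}_{L^1(\pspace)}$; and a verbatim transcription of its proof — disintegrating with respect to $Q_n$ — would leave the factor $\pfpriordensa(q)/\pfpriordensaKDE(q)$, which is \emph{not} identically one (because $\postdensaKDE$ is not a consistent solution for $Q_n$) and whose control would require a uniform positive lower bound on the KDE that the hypotheses do not provide. The decomposition above circumvents both issues by always pairing a denominator $\pfpriordensaKDE(\qoia)$ with a numerator $\obsdens(\qoia)$ — so that Assumption~\ref{assump:approxdomKDE} applies — and by disintegrating with respect to $Q$, for which the push-forward of the prior is the true density $\pfpriordens$. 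Everything else is routine: $\qoia\to\qoi$ in $L^\infty(\pspace)$ lets fiberwise sup-norms be replaced by sup-norms over $\pspace$, and Assumptions~\ref{assump:dom} and~\ref{assump:approxdom} enter only through the well-posedness of $\postdens$ and $\postdensa$.
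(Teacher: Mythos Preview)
Your proof is correct and takes a genuinely different route from the paper's. The paper follows the proof of Theorem~\ref{thm:posterior_convergence} essentially verbatim: it keeps the decomposition \eqref{eq:ratio_diff_decomp} (with $\pfpriordensaKDE$ in place of $\pfpriordensa$), disintegrates with respect to $Q_n$, and then confronts precisely the issue you anticipated---that $\priordens/\pfpriordensaKDE(\qoia)$ is not a conditional density on the fiber $Q_n^{-1}(q)$. The paper resolves this by writing $\norm{\priordens/\pfpriordensaKDE(\qoia)}_{L^1(\pspace\cap Q_n^{-1}(q))} = \pfpriordensa(q)/\pfpriordensaKDE(q)$ and asserting that this ratio is bounded by $\hat{C}/C$ via Assumptions~\ref{assump:approxdom} and~\ref{assump:approxdomKDE}. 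Your alternative---adding and subtracting $\obsdens(\qoia)/\pfpriordens(\qoi)$ rather than $\obsdens(\qoi)\pfpriordens(\qoi)$, and disintegrating with respect to the exact map $Q$---sidesteps this entirely: the denominator $\pfpriordens(\qoi)$ cancels exactly against the fiberwise mass of the prior, and the only ratio needing control is $\obsdens(\qoia)/\pfpriordensaKDE(\qoia)$, to which Assumption~\ref{assump:approxdomKDE} applies directly. This is arguably the cleaner argument, since Assumptions~\ref{assump:approxdom} and~\ref{assump:approxdomKDE} furnish only \emph{lower} bounds on $\pfpriordensa$ and $\pfpriordensaKDE$ in terms of $\obsdens$, so a uniform bound on the ratio $\pfpriordensa/\pfpriordensaKDE$ does not obviously follow from them; your decomposition uses the predictability assumptions exactly as stated.
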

\begin{proof}
The proof of \eqref{eq:post_convKDE} follows the proof of Theorem~\ref{thm:posterior_convergence} very closely since the approximate map, $Q_n(\lambda)$, is
the same, and only additional approximation is the value of the push-forward of the prior using the KDE.
For the sake of brevity, we only discuss the different arguments used here.
We follow the same arguments to decompose,
\begin{equation*}
\abs{\hat{r}_n(\lambda)-r(\lambda)} \leq  \underbrace{\abs{\frac{\obsdens(\qoia)-\obsdens(\qoi)}{\pfpriordensaKDE(\qoia)}}}_{\mbf{I}} + \underbrace{\abs{\frac{\obsdens(\qoi)\left[\pfpriordens(\qoi)-\pfpriordensaKDE(\qoia)\right]}{\pfpriordensaKDE(\qoia)\pfpriordens(\qoi)}}}_{\mbf{II}}.
\end{equation*}
The first term is bounded similar to before, except for the fact that for a.e. $q\in \dspace$, the ratio $\priordens(\lambda)/\pfpriordensaKDE(\qoia)$ is only an approximation of a conditional density.
However, we can use the fact that $\pfpriordensa(\qoia)$ and $\pfpriordensaKDE(\qoia)$ are constants along $\pspace \cap Q_n^{-1}(q)$ along with Assumptions~\ref{assump:approxdom} and \ref{assump:approxdomKDE} to show
\begin{equation*}
\norm{\frac{\priordens(\lambda)}{\pfpriordensaKDE(\qoia)}}_{L^1(\pspace\cap Q_n^{-1}(q))} = \left| \frac{\pfpriordensa(\qoia)}{\pfpriordensaKDE(\qoia)} \right| \norm{\frac{\priordens(\lambda)}{\pfpriordensa(\qoia)}}_{L^1(\pspace\cap Q_n^{-1}(q))} \leq \frac{\hat{C}}{C}
\end{equation*}
for a.e. $q\in\dspace$,
where $C$ and $\hat{C}$ are the constants in Assumptions~\ref{assump:approxdom} and \ref{assump:approxdomKDE} respectively.
The bound on $\mbf{II}$ follows a similar argument, but uses Theorem~\ref{thm:kdepf_convergence} instead of \ref{thm:pf_convergence}.
\end{proof}

\begin{theorem}[Convergence to a KDE Approximation]\label{thm:posterior_convergenceKDE2}
Assume that a set of $M$ samples are used to generate KDE approximations using the true model and the approximate model giving $\postdensKDE$ and $\postdensaKDE$ respectively.
Suppose Assumptions~\ref{assump:pfprior},~\ref{assump:surrogate},~\ref{assump:dom},~\ref{assump:approxdom} and~\ref{assump:approxdomKDE} hold.
Then,
\begin{equation}\label{eq:post_KDE_conv_KDE}
\norm{\postdensaKDE(\lambda) - \postdensKDE(\lambda)}_{L^1(\pspace)} \leq C \left(\|\qoi - \qoia\|_{L^\infty(\pspace)} \right).
\end{equation}
\end{theorem}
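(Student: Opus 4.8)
The plan is to mirror the proof of Theorem~\ref{thm:posterior_convergence} almost verbatim, since the only new element is that the denominator of each posterior ratio is now a kernel density estimate built from the common sample set rather than an exact push‑forward. Writing $\hat r(\lambda)=\obsdens(\qoi)/\pfpriordensKDE(\qoi)$ and $\hat r_n(\lambda)=\obsdens(\qoia)/\pfpriordensaKDE(\qoia)$, I would first apply the disintegration theorem with respect to the approximate map $\qoia$ to rewrite
\begin{equation*}
\norm{\postdensaKDE(\lambda)-\postdensKDE(\lambda)}_{L^1(\pspace)} = \int_\dspace \int_{\pspace\cap Q_n^{-1}(q)} \priordens(\lambda)\abs{\hat r_n(\lambda)-\hat r(\lambda)}\, d\mu_{\pspace,q}\, d\dmeas .
\end{equation*}
Then I would put the difference of ratios over a common denominator and add and subtract $\obsdens(\qoi)\pfpriordensKDE(\qoi)$ in the numerator to get the decomposition $\abs{\hat r_n-\hat r}\le \mbf{I}+\mbf{II}$ with $\mbf{I}=\abs{(\obsdens(\qoia)-\obsdens(\qoi))/\pfpriordensaKDE(\qoia)}$ and $\mbf{II}=\abs{\obsdens(\qoi)(\pfpriordensKDE(\qoi)-\pfpriordensaKDE(\qoia))/(\pfpriordensaKDE(\qoia)\pfpriordensKDE(\qoi))}$, exactly as in \eqref{eq:ratio_diff_decomp}.

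For $\mbf{I}$ I would use the Lipschitz continuity of $\obsdens$ to bound the numerator by $C\abs{\qoi-\qoia}$, then apply H\"older's inequality on each fiber $\pspace\cap Q_n^{-1}(q)$; the resulting fiber factor $\norm{\priordens(\lambda)/\pfpriordensaKDE(\qoia)}_{L^1(\pspace\cap Q_n^{-1}(q))}$ is bounded by a constant for a.e.\ $q$ via the identity and estimate already established in the proof of Theorem~\ref{thm:posterior_convergenceKDE} (using that $\pfpriordensa(\qoia)$ and $\pfpriordensaKDE(\qoia)$ are constant along the fiber together with Assumptions~\ref{assump:approxdom} and \ref{assump:approxdomKDE}). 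Since $\pspace$ is precompact and $\qoi$ is piecewise smooth and bounded, $\dmeas(\dspace)<\infty$, so integrating over $\dspace$ yields a bound $C\,\dmeas(\dspace)\,\norm{\qoi-\qoia}_{L^\infty(\pspace)}$ for the $\mbf{I}$ contribution. For $\mbf{II}$ the same H\"older/fiber argument applies once the prefactor $\obsdens(\qoi)/\pfpriordensKDE(\qoi)$ is shown to be uniformly bounded, and then $\norm{\pfpriordensKDE(\qoi)-\pfpriordensaKDE(\qoia)}_{L^\infty(\pspace)}$ is controlled directly by \eqref{eq:qoiaKDE_conv_KDE2} of Lemma~\ref{lemma:kde_conv_kde}, giving $\norm{\pfpriordensKDE(\qoi)-\pfpriordensaKDE(\qoia)}_{L^\infty(\pspace)}\le C\norm{\qoi-\qoia}_{L^\infty(\pspace)}$. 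Summing the two contributions produces \eqref{eq:post_KDE_conv_KDE}.

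I expect the main obstacle to be the bookkeeping for the term $\mbf{II}$: unlike in Theorem~\ref{thm:posterior_convergence}, where Assumption~\ref{assump:dom} directly bounds $\obsdens(\qoi)/\pfpriordens(\qoi)$, here one needs $\obsdens(\qoi)/\pfpriordensKDE(\qoi)$ uniformly bounded, which is not literally among the stated hypotheses. I would handle this by combining Assumption~\ref{assump:approxdomKDE} with the $L^\infty$ closeness of the two kernel estimates from Lemma~\ref{lemma:kde_conv_kde}: for $n$ large enough, $\pfpriordensKDE$ is bounded below by a fixed fraction of $\obsdens$ on the support of $\obsdens$ (and the integrand vanishes off that support), so the ratio is bounded there; equivalently one could add the essentially cost‑free (for large $M$) predictability assumption $\obsdens\le \hat C\,\pfpriordensKDE$ in parallel with Assumption~\ref{assump:approxdomKDE}. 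A secondary technical wrinkle is that along a fiber of $\qoia$ the true‑model estimate $\pfpriordensKDE(\qoi)$ is not constant and $\priordens/\pfpriordensaKDE(\qoia)$ is only an approximate conditional density, so the fiber integrals must be bounded by constants rather than set equal to one — but this is exactly the device already used in Theorem~\ref{thm:posterior_convergenceKDE}. Finally I would stress, as the paper already notes for Lemma~\ref{lemma:kde_conv_kde}, that this result does not need the regularity and bandwidth hypotheses of Theorem~\ref{thm:kde_error}: we only compare two kernel estimates built from the same samples, so no convergence of the KDE to the true density is invoked, which is why \eqref{eq:post_KDE_conv_KDE} carries only the model‑error term $\norm{\qoi-\qoia}_{L^\infty(\pspace)}$ and no $(\log\numsamp/\numsamp)$ factor.
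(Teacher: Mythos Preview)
Your proposal is correct and matches the paper's own argument, which consists solely of the remark that the proof is almost identical to that of Theorem~\ref{thm:posterior_convergenceKDE} with Lemma~\ref{lemma:kde_conv_kde} substituted for Theorem~\ref{thm:kdepf_convergence} when bounding $\mbf{II}$. Your observation that controlling $\obsdens(\qoi)/\pfpriordensKDE(\qoi)$ in term $\mbf{II}$ requires a predictability assumption on the true-model KDE not literally among the stated hypotheses is a fair technical point that the paper's one-line sketch does not address.
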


The proof of Theorem~\ref{thm:posterior_convergenceKDE2} is almost identical to the proof of Theorem~\ref{thm:posterior_convergenceKDE}.
The only difference is the use of Lemma~\ref{lemma:kde_conv_kde} instead of Theorem~\ref{thm:kdepf_convergence}.


\section{Applications and Numerical Results}\label{sec:applications}

In this section we specialize the general theory to two classes of convergent approximate models.
Specifically, we consider sparse grid surrogate approximations, discretized partial differential equations
and combinations of these two.
For each application, we first describe the sequence of discretized models and recall the known
results regarding the convergence of the approximate model.
These theoretical results are combined with Theorems~\ref{thm:kdepf_convergence} and~\ref{thm:posterior_convergenceKDE}
to give corollaries specific to each class of approximate model.
We also provide numerical results to demonstrate the
convergence of the push-forward of the prior and the posterior.
We consider relatively simple numerical examples to enable construction of sequences of numerical approximations to demonstrate convergence of the push-forward and posterior densities.
For the sake of brevity, we only analyze the contribution of the KDE to the error in the first application.
The corresponding results for the other two applications were similar.
Before we present the applications, we discuss some of the numerical considerations that are common throughout the
remainder of this paper and some of the diagnostic tools we use to assess whether or not the predictability assumptions have been satisfied.

\subsection{Numerical Considerations and Verifying Assumptions}\label{subsec:numercons}
In all the application to follow, we investigate the convergence in the push-forward and posterior densities as the approximate models converge.
For both the exact and approximate models, we use a Gaussian KDE to construct estimates of the push-forward densities.

We estimate the $L^\infty$-norm of the push-forward using the samples generated from $P_\pspace$. That is
\begin{equation*}
\|\pi^Q_\dspace(\lambda) - \hat{\pi}^{Q_n}_\dspace(\lambda)\|_{L^\infty(\dspace)} \approx
\max_{1\leq i \leq \numsamp} |\pfpriordens(Q(\lambda_i)) - \pfpriordensaKDE(Q_n(\lambda_i))|.
\end{equation*}
Similarly we estimate the $L^1$-norm of the posterior using the samples generated from the prior:
\begin{equation*}
\|\postdens(\lambda) - \postdensaKDE(\lambda)\|_{L^1(\pspace)} = \|r(\lambda) - \hat{r}_n(\lambda)\|_{L^1(\pspace;\priormeas)} \approx
\frac{1}{\numsamp} \sum_{i=1}^\numsamp |r(Q(\lambda_i)) - \hat{r}_n(Q_n(\lambda_i))|.
\end{equation*}

Approximating the posterior density considered in this work requires solving the forward UQ problem first.
Once the solution to the forward UQ problem has been obtained, i.e., the push-forward $\pi^Q_\dspace$ has been approximated, we can then evaluate the posterior density at the samples used to build $\pi^Q_\dspace$ at no extra cost using~\eqref{eq:postpdf}.
We can then use a standard rejection sampling strategy to accept a subset of these samples for the posterior (see~\cite{cbayes}).

In traditional Bayesian inference (see e.g.~\cite{Stuart_IP_2010,Kennedy_O_JRSSSB_2001,Bernardo1994,Robert2001,Gelman2013,Jaynes1998}),
very little in known about
the posterior measure or density\footnote{Unless the map is linear and the prior and
noise model are Gaussian.  In this case the posterior is also known to be Gaussian.}.
The situation is slightly different for the approach considered in this work.
In our formulation, the posterior is constructed such that the push-forward of the posterior
matches the given density on the QoI,
which gives us a means to assess the accuracy of the posterior.
For example, if the observed density on the QoI is Gaussian, then
we can compare the mean and the variance of the push-forward of the posterior with
the corresponding values for $\obsdens$.
We can also estimate the integral of the posterior,
\[\text{I}(\postdens) = \int_{\pspace} \postdens(\lambda) \ d\pmeas = \int_{\pspace} \priordens(\lambda) r(Q(\lambda)) \ d\pmeas = \int_\pspace r(Q(\lambda)) \ d\priormeas\]
and the Kullback-Liebler (KL) divergence~\cite{kl} between the prior and the posterior,
\[\text{KL}(\priordens : \postdens) = \int_{\pspace} \postdens(\lambda) \log{\left(\frac{\postdens(\lambda)}{\priordens(\lambda)}\right)} \ d\pmeas = \int_\pspace r(Q(\lambda))\log{r(Q(\lambda))}\ d\priormeas.\]

In practice, we actually compute $\text{I}(\postdensaKDE)$ and $\text{KL}(\priordens : \postdensaKDE)$.
Both of these quantities can be estimated using the model evaluations that were used to estimate the push-forward of the prior, and are therefore very cheap diagnostic tools~\cite{cbayes}.
The integral of the posterior is an especially useful diagnostic tool since it is easy to show that
\[\text{I}(\postdens) = \int_{\pspace} \postdens(\lambda) \ d\pmeas = \int_\dspace \obsdens(q) \ d\dmeas.\]
Thus, if the approximation of the posterior using the approximate model and a KDE for the push-forward of the prior does not integrate to one, or at least a reasonable Monte Carlo estimate of one,
then this indicates that Assumption~\ref{assump:approxdomKDE} is not satisfied.

\subsection{Sparse Grid Approximations}\label{subsec:sparse}
Surrogates of the parameter-to-QoI map are often used to reduce the computational cost of uncertainty quantification. The theory we have developed thus far is quite general and can be applied to any surrogate model, however here we focus on sparse grid collocation methods, which are known to provide efficient and accurate approximation of stochastic
quantities~\cite{jakeman2013localuq,ma09,nobile08a}.

For simplicity we will restrict attention to consider stochastic collocation problems
characterized by variables $\lambda$ with finite support normalized to fit in the domain
$\dom=[0,1]^\pspacedim$.
However, the technique proposed here can be applied to semi or unbounded
random variables using the methodology outlined in~\cite{Jakeman10Epistemic}.

Given a univariate interpolation rule with points $\Lambda_{n}=\{\lambda_{n,i}:i<0\le i \le P_{n}\}$ and basis functions $\phi_{n,i}$, a level-$n$ sparse grid with $N_n$ points~\cite{bungartz04} approximates $\qoi$ via a
weighted linear combination of tensor products of the univariate rules
\begin{equation}\label{eq:smolyak}
\qoia = \sum_{n-\pspacedim+1\leq|\bn|_1\leq n} (-1)^{n-|\bn|_1}{\pspacedim-1 \choose n-|\bn|_1}\sum_{i_1=1}^{P_{n^1}}\cdots\sum_{i_\pspacedim=1}^{P_{n_\pspacedim}}f(\lambda_{\bn,\bi})\cdot\left( \phi_{n_1,i_1}\otimes\cdots\otimes\phi_{n_\pspacedim,i_\pspacedim}\right)(\lambda)
\end{equation}
The samples $\lambda_{\bn,\bi}$ used to construct the sparse grid are a set of anisotropic grids $ \Lambda_\bn = \Lambda_{n_1} \times \cdots \Lambda_{n_\pspacedim}$ on the domain $\dom$, where $\bn=(n_1,\ldots,n_\pspacedim)$ and $\bi=(i_1,\ldots,i_\pspacedim)$ are multi-indices that denote the level and position of a point within each univariate interpolation rule.

Typically when approximating $\qoi$ with a smooth dependence on $\lambda$, the Lagrange polynomials are the best choice of basis functions and $\Lambda_{n}$ are chosen to be a set of well conditioned points such as the nested Clenshaw-Curtis points. The number of points $P_{n}$ of a one-dimensional
grid of a given level, and thus the total number of points in the sparse grid $N_n$, is dependent on the growth rate of the quadrature rule chosen. For Clenshaw-Curtis points $P_n=2^n+1$.
Under some assumptions on the regularity we recall the following result.

\begin{lemma}[\cite{nobile2008}]
\label{lemma:sg-point-wise-error}
For sufficiently smooth $\qoi$,
the isotropic level-$n$ sparse-grid~\eqref{eq:smolyak} based on Clenshaw-Curtis abscissas with $\numpts_n$ points satisfies:
\begin{equation*}
\|\qoi-\qoia\|_{L^\infty(\pspace)} \le C_1(\sigma) \numpts_n^{-\mu_1}
\end{equation*}
where $\mu_1 :=\frac{\sigma}{1+\log{2\pspacedim}}$ and the constant $C_1(\sigma)$ depends on the size of the region of analyticity $\sigma$ of $Q$ but not on the number of points in the sparse grid.
\end{lemma}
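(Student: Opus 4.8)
The plan is to follow the classical Smolyak error analysis for functions with an analytic extension off of $[0,1]^\pspacedim$. First I would use the regularity hypothesis to obtain an analytic continuation of $\qoi$ to a Bernstein polyellipse $\Sigma_\sigma\subset\mathbb{C}^\pspacedim$ whose size is controlled by the analyticity parameter $\sigma$. On such a region, classical one-dimensional Chebyshev/Clenshaw-Curtis interpolation theory gives geometric decay: the univariate interpolant $I_m$ at the $P_m=2^m+1$ nested Clenshaw-Curtis abscissas satisfies $\|\,\cdot\,-I_m[\,\cdot\,]\|_{L^\infty([0,1])}\lesssim \rho^{-P_m}$ for some $\rho=\rho(\sigma)>1$, uniformly with respect to the remaining variables, and the Lebesgue constants of $I_m$ grow at most linearly in $m$.

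Second, I would rewrite the level-$n$ sparse grid operator in hierarchical form through the univariate difference operators $\Delta_m:=I_m-I_{m-1}$ (with $I_0:=0$), so that $\mathcal{A}_n=\sum_{|\mathbf{m}|_1\le n}\Delta_{m_1}\otimes\cdots\otimes\Delta_{m_\pspacedim}$, and observe that by telescoping the error is exactly the discarded tail,
\[
\qoi - \mathcal{A}_n[Q] = \sum_{|\mathbf{m}|_1 > n} \bigl(\Delta_{m_1}\otimes\cdots\otimes\Delta_{m_\pspacedim}\bigr)[Q].
\]
Each tensorized term is then estimated by a product of univariate factors: every $\Delta_{m_j}$ is bounded on $C([0,1])$ with at most linear-in-$m_j$ norm, and applied to the analytic $Q$ it inherits the decay $\lesssim \rho^{-2^{m_j}}$, so the $\mathbf{m}$-th term is bounded by $C(\sigma)\bigl(\prod_j m_j\bigr)\rho^{-\sum_j 2^{m_j}}$.

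Third, I would sum this multi-index series over $\{|\mathbf{m}|_1>n\}$. The dominant contribution sits near the critical layer $|\mathbf{m}|_1\approx n$; bounding the number of multi-indices on each layer by a polynomial in $n$ of degree $\pspacedim-1$, using $\sum_j 2^{m_j}\gtrsim 2^{n/\pspacedim}$ on that layer, and absorbing the polynomial-in-$n$ prefactors into the exponential, one obtains $\|\qoi-\mathcal{A}_n[Q]\|_{L^\infty(\pspace)}\le C_1(\sigma)\,\rho^{-c(\pspacedim)2^{n/\pspacedim}}$. Finally I would convert this level-$n$ estimate into one in the number of points: for Clenshaw-Curtis the sparse grid cardinality obeys $\numpts_n\le C(\pspacedim)\,2^n n^{\pspacedim-1}$, hence $2^n\gtrsim \numpts_n/\mathrm{polylog}(\numpts_n)$ and $2^{n/\pspacedim}\gtrsim \numpts_n^{1/\pspacedim}/\mathrm{polylog}(\numpts_n)$; taking logarithms and optimizing over the constants yields the stated algebraic rate $\|\qoi-\qoia\|_{L^\infty(\pspace)}\le C_1(\sigma)\,\numpts_n^{-\mu_1}$ with $\mu_1=\sigma/(1+\log 2\pspacedim)$, where the denominator $1+\log 2\pspacedim$ is precisely the price of the $n^{\pspacedim-1}$ growth of the grid combined with the $2^{n/\pspacedim}$ (rather than $2^n$) decay on the critical layer.

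The main obstacle I anticipate is not the exponential-in-$n$ decay, which is robust, but the bookkeeping in the last two steps: tracking how the polynomial-in-$n$ factors coming from the Lebesgue constants, the count of multi-indices on each layer, and the interior sub-sums interact with the doubly-exponential argument $2^{m_j}$, and then matching this carefully against the $n^{\pspacedim-1}$ growth of $\numpts_n$ so that the resulting algebraic exponent in $\numpts_n$ comes out to be exactly $\sigma/(1+\log 2\pspacedim)$ rather than something weaker. Extracting that sharp dimension-dependent denominator is the delicate part, and is where one must be most careful about which inequalities are tight; a full proof along these lines is given in~\cite{nobile2008}.
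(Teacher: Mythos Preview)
The paper does not prove this lemma; it is stated as a known result and attributed to \cite{nobile2008} without argument. Your sketch correctly outlines the standard Smolyak error analysis carried out in that reference (analytic extension to a Bernstein polyellipse, hierarchical difference-operator decomposition, geometric univariate decay combined with Lebesgue-constant growth, tail summation over $|\mathbf{m}|_1>n$, and conversion from level to point count), and your closing remark that the full details are in \cite{nobile2008} matches exactly how the paper handles it.
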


Combining Lemma~\ref{lemma:sg-point-wise-error} with Theorem~\ref{thm:kdepf_convergence} gives the following result.
\begin{cor}\label{cor:surrerror}
Under the assumptions of Theorem~\ref{thm:kdepf_convergence} and Lemma~\ref{lemma:sg-point-wise-error}, the error in the push-forward of the prior using an isotropic level-$n$ sparse grid approximation based on Clenshaw-Curtis abscissas with $\numpts_n$ points satisfies
\begin{equation}\label{eq:qoiaKDE_pf_conv_surrerror}
\norm{\pfpriordens(q)-\pfpriordensaKDE(q)}_{L^\infty(\dspace)} \leq C \left( \left(\frac{\log \numsamp}{\numsamp}\right)^{\frac{s}{2s+\dspacedim}} + C_1(\sigma) \numpts_n^{-\mu_1} \right),
\end{equation}
and
\begin{equation}\label{eq:qoiaKDE_pf_error_surrerror}
\norm{\pfpriordens(\qoi)-\pfpriordensaKDE(\qoia)}_{L^\infty(\pspace)} \leq C \left(\left(\frac{\log \numsamp}{\numsamp}\right)^{\frac{s}{2s+\dspacedim}} + C_1(\sigma) \numpts_n^{-\mu_1} \right),
\end{equation}
where $\mu_1 =\frac{\sigma}{1+\log{2\pspacedim}}$.
\end{cor}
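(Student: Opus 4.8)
The plan is to obtain the corollary by directly composing the two cited estimates, so the only real work is checking that their hypotheses are compatible. First I would apply Theorem~\ref{thm:kdepf_convergence} with $\qoia$ taken to be the isotropic level-$n$ sparse grid interpolant \eqref{eq:smolyak}: its hypotheses --- the uniform continuity of $\pfpriordens$ and its $s$-th order derivatives, the order, boundedness, integrability and Lipschitz continuity of the kernel $K$, and the admissible bandwidth sequence $h_\numsamp$ --- are exactly the assumptions carried over into the statement of the corollary, and they are conditions on $\pfpriordens$, $K$ and $h_\numsamp$ alone, hence unaffected by which approximate map is used. This yields
\[ \norm{\pfpriordens(q)-\pfpriordensaKDE(q)}_{L^\infty(\dspace)} \leq C \left( \left(\frac{\log \numsamp}{\numsamp}\right)^{\frac{s}{2s+\dspacedim}} + \|\qoi - \qoia\|_{L^\infty(\pspace)} \right) \]
together with the analogous bound for $\norm{\pfpriordens(\qoi)-\pfpriordensaKDE(\qoia)}_{L^\infty(\pspace)}$.

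Next I would confirm that the sparse grid construction produces a legitimate sequence $(\qoia)$ for this purpose. Since $\qoi$ is assumed sufficiently smooth (analytic in a region of size $\sigma$) on the compact cube $\dom=[0,1]^\pspacedim$, the interpolant \eqref{eq:smolyak} is a polynomial, hence bounded and measurable, so the push-forward densities and their KDE approximations are well defined; and Lemma~\ref{lemma:sg-point-wise-error} gives $\|\qoi-\qoia\|_{L^\infty(\pspace)} \le C_1(\sigma)\numpts_n^{-\mu_1}$ with $\mu_1 = \frac{\sigma}{1+\log{2\pspacedim}}$, so in particular $\qoia\to\qoi$ in $L^\infty(\pspace)$. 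Substituting this bound into the two displayed inequalities above gives \eqref{eq:qoiaKDE_pf_conv_surrerror} and \eqref{eq:qoiaKDE_pf_error_surrerror}, where $C$ is the constant from Theorem~\ref{thm:kdepf_convergence} (the coefficient multiplying the model error, absorbed into $C$ in Theorem~\ref{thm:kdepf_convergence}, is displayed here explicitly as $C_1(\sigma)$).

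I do not expect any genuine obstacle: the difficult analysis --- the converse-to-Scheff\'e argument and the KDE rate --- has already been done in Theorems~\ref{thm:pf_convergence}, \ref{thm:kde_error} and \ref{thm:kdepf_convergence}, and the corollary is a one-line substitution of Lemma~\ref{lemma:sg-point-wise-error}. The only point deserving a word of care is bookkeeping of hypotheses: one should make explicit that the regularity required of the push-forward density $\pfpriordens$ for the KDE rate is independent of the analyticity required of $\qoi$ for the sparse grid rate, so the two sets of assumptions can be imposed simultaneously without conflict.
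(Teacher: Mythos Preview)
Your proposal is correct and matches the paper's approach exactly: the paper does not write out a proof but simply states that the corollary follows by ``combining Lemma~\ref{lemma:sg-point-wise-error} with Theorem~\ref{thm:kdepf_convergence},'' which is precisely the substitution you describe. Your additional remarks about verifying compatibility of hypotheses are a reasonable elaboration of what the paper leaves implicit.
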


Combining Lemma~\ref{lemma:sg-point-wise-error} with Theorem~\ref{thm:posterior_convergenceKDE} gives the following result.
\begin{cor}\label{cor:surrerror_post}
Under the assumptions of Theorem~\ref{thm:posterior_convergenceKDE}, the error in the posterior using an isotropic level-$n$ sparse grid approximation based on Clenshaw-Curtis abscissas with $\numpts_n$ points satisfies
\begin{equation}\label{eq:qoiaKDE_post_conv_surrerror}
\norm{\postdens(\lambda)-\postdensaKDE(\lambda)}_{L^1(\pspace)} \leq C \left( \left(\frac{\log \numsamp}{\numsamp}\right)^{\frac{s}{2s+\dspacedim}} + C_1(\sigma) \numpts_n^{-\mu_1}  \right).
\end{equation}
\end{cor}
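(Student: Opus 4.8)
The plan is simply to chain the two cited results. First I would invoke Theorem~\ref{thm:posterior_convergenceKDE}, whose hypotheses are assumed verbatim in the statement of the corollary, applied to the sparse grid approximation $\qoia$ defined by \eqref{eq:smolyak}. This yields
\[
\norm{\postdensaKDE(\lambda) - \postdens(\lambda)}_{L^1(\pspace)} \leq C \left(\left(\frac{\log \numsamp}{\numsamp}\right)^{\frac{s}{2s+\dspacedim}} + \norm{\qoi - \qoia}_{L^\infty(\pspace)} \right),
\]
so that the only remaining task is to replace the approximate-model error $\norm{\qoi - \qoia}_{L^\infty(\pspace)}$ by an explicit rate in the number of sparse grid points.

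Second, I would apply Lemma~\ref{lemma:sg-point-wise-error}, which for the isotropic level-$n$ Clenshaw--Curtis sparse grid with $\numpts_n$ points gives $\norm{\qoi - \qoia}_{L^\infty(\pspace)} \le C_1(\sigma)\numpts_n^{-\mu_1}$ with $\mu_1=\tfrac{\sigma}{1+\log{2\pspacedim}}$, under the standing smoothness (analyticity in a region of size $\sigma$) of $\qoi$. Substituting this bound into the estimate above, and absorbing $C$ into a single generic constant while keeping $C_1(\sigma)$ explicit as in the statement, immediately produces \eqref{eq:qoiaKDE_post_conv_surrerror}. This is a one-line substitution; no nontrivial calculation is needed.

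The only point that warrants a remark rather than a computation is the consistency of hypotheses: one should check that, for the sparse grid family $(\qoia)$, the standing assumptions of Theorem~\ref{thm:posterior_convergenceKDE} are not vacuous — in particular Assumption~\ref{assump:surrogate} (uniform boundedness and asymptotic uniform equicontinuity of the approximate push-forward densities $\pfpriordensa$) and the predictability Assumptions~\ref{assump:approxdom} and~\ref{assump:approxdomKDE}. Since the Lagrange-polynomial sparse grid interpolant of a smooth map is itself smooth and, by Lemma~\ref{lemma:sg-point-wise-error}, converges uniformly to $\qoi$ (so in particular $\qoia\to\qoi$ in $L^\infty(\pspace)$, as required by Theorem~\ref{thm:posterior_convergenceKDE}), the densities $\pfpriordensa$ inherit the regularity needed for Assumption~\ref{assump:surrogate}, and the predictability assumptions hold for $n$ sufficiently large whenever Assumption~\ref{assump:dom} holds, because $\pfpriordensa\to\pfpriordens$ uniformly by Theorem~\ref{thm:pf_convergence}. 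I do not expect a genuine obstacle here: the entire content of the corollary resides in the cited Lemma and Theorem, so once the hypotheses are seen to match the proof reduces to the substitution above.
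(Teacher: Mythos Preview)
Your proposal is correct and matches the paper's approach exactly: the paper simply states that the corollary follows by combining Lemma~\ref{lemma:sg-point-wise-error} with Theorem~\ref{thm:posterior_convergenceKDE}, with no further argument given. Your additional remarks on verifying the standing assumptions for the sparse grid family are reasonable due diligence but go beyond what the paper itself supplies.
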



\subsubsection{Forward problem}\label{subsubsec:peaks_FWD}
The goal of this section is to verify the convergence rates in Corollary~\ref{cor:surrerror}.
Consider the analytical function of two input parameters, $\lambda \in [0,1]^2$,
\begin{multline*}
Q(\lambda) = 2e^{-\frac{(\lambda_1-0.25)^2}{2(0.15)^2} - \frac{(\lambda_2-0.75)^2}{2(0.15)^2}}
+ 3e^{-\frac{(\lambda_1-0.75)^2}{2(0.2)^2} - \frac{(\lambda_2-0.75)^2}{2(0.2)^2}} \\
+ 2.5e^{-\frac{(\lambda_1-0.33)^2}{2(0.1)^2} - \frac{(\lambda_2-0.33)^2}{2(0.1)^2}}
- e^{-\frac{(\lambda_1-0.8)^2}{2(0.1)^2} - \frac{(\lambda_2-0.4)^2}{2(0.2)^2}}
\end{multline*}
which is a summation of four weighted Gaussian peaks.
We assume a uniform prior probability distribution on the input parameters.

We compute a reference solution by evaluating the model at 50,000 samples
generated from the prior distribution.
Next, we consider a sequence of approximate models using
isotropic sparse grids with Clenshaw-Curtis abscissa from the Dakota toolkit~\cite{dakota}.
In Figure~\ref{fig:ex1_rsa}, we plot the response surface approximation at the
50,000 sample points generated from the prior using a level-4 sparse grid (49 model evaluations),
a level-8 sparse grid (225 model evaluations), and the reference solution.
Clearly, the level-4 sparse grid provides a very poor approximation of the response surface.
\begin{figure}[ht]
\begin{center}
\includegraphics[width=0.32\textwidth,trim={1cm 5cm 1cm 5cm},clip]{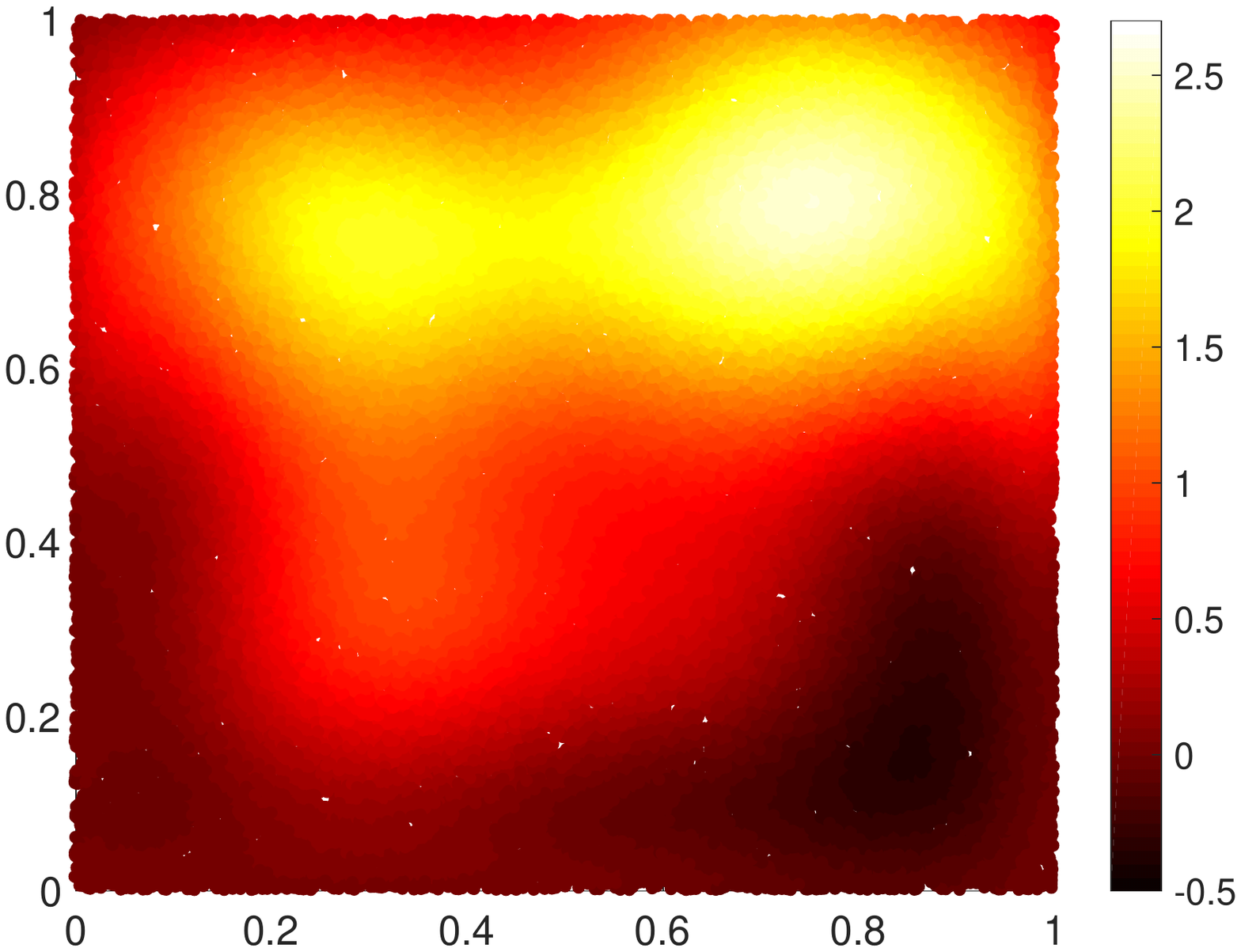}
\includegraphics[width=0.32\textwidth,trim={1cm 5cm 1cm 5cm},clip]{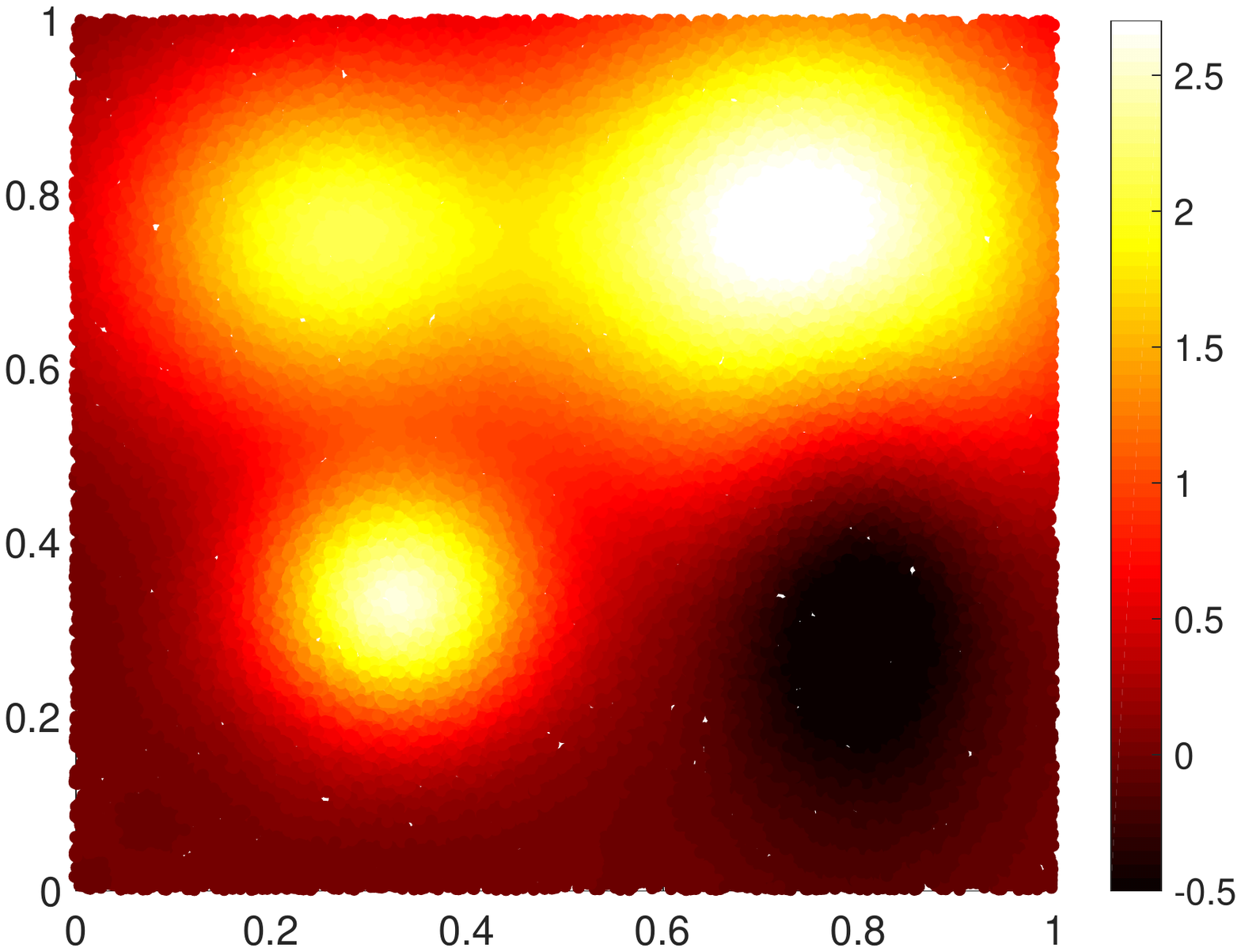}
\includegraphics[width=0.32\textwidth,trim={1cm 5cm 1cm 5cm},clip]{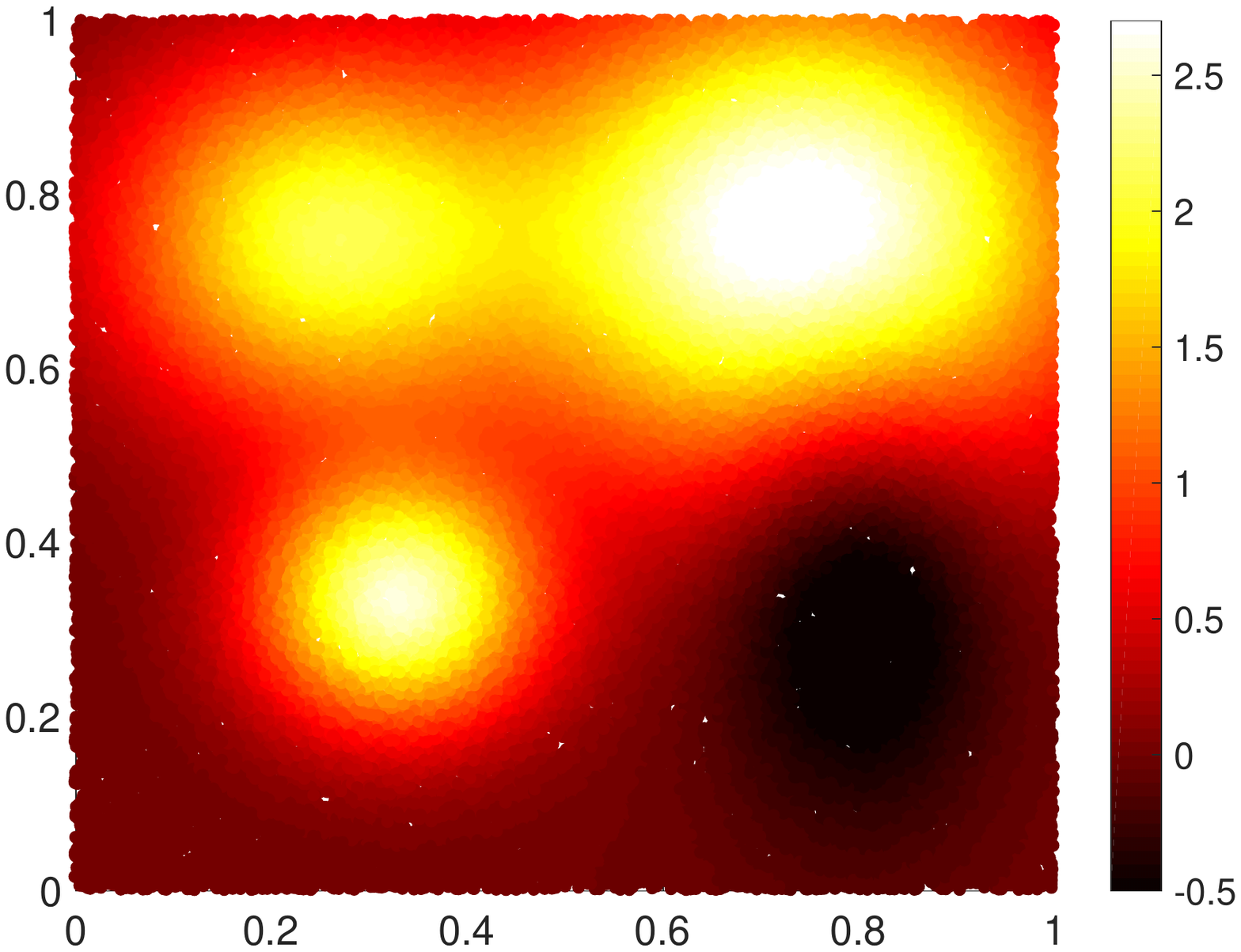}
\end{center}
\caption{The level-4 sparse grid approximation of the QoI (left), the level-8 sparse grid approximation of the QoI (middle), and the reference solution (right).}
\label{fig:ex1_rsa}
\end{figure}

Next, we focus on the KDE contribution to the error by fixing the sparse grid at a level-12 approximation (577 model evaluations).
We then use relatively small sets of samples in the KDE approximation of the push-forward of the prior
setting $\numsamp=100,500,1000,2000,5000,10000$, and $20000$ and we repeat each experiment 25 times
to compute the average error.
On the left side of Figure~\ref{fig:ex1_fwd_error}, we plot the $L^\infty$ error in the push-forward of the prior
as the number of samples used to compute the KDE increases.
\begin{figure}[ht]
\begin{center}
\includegraphics[width=0.46\textwidth]{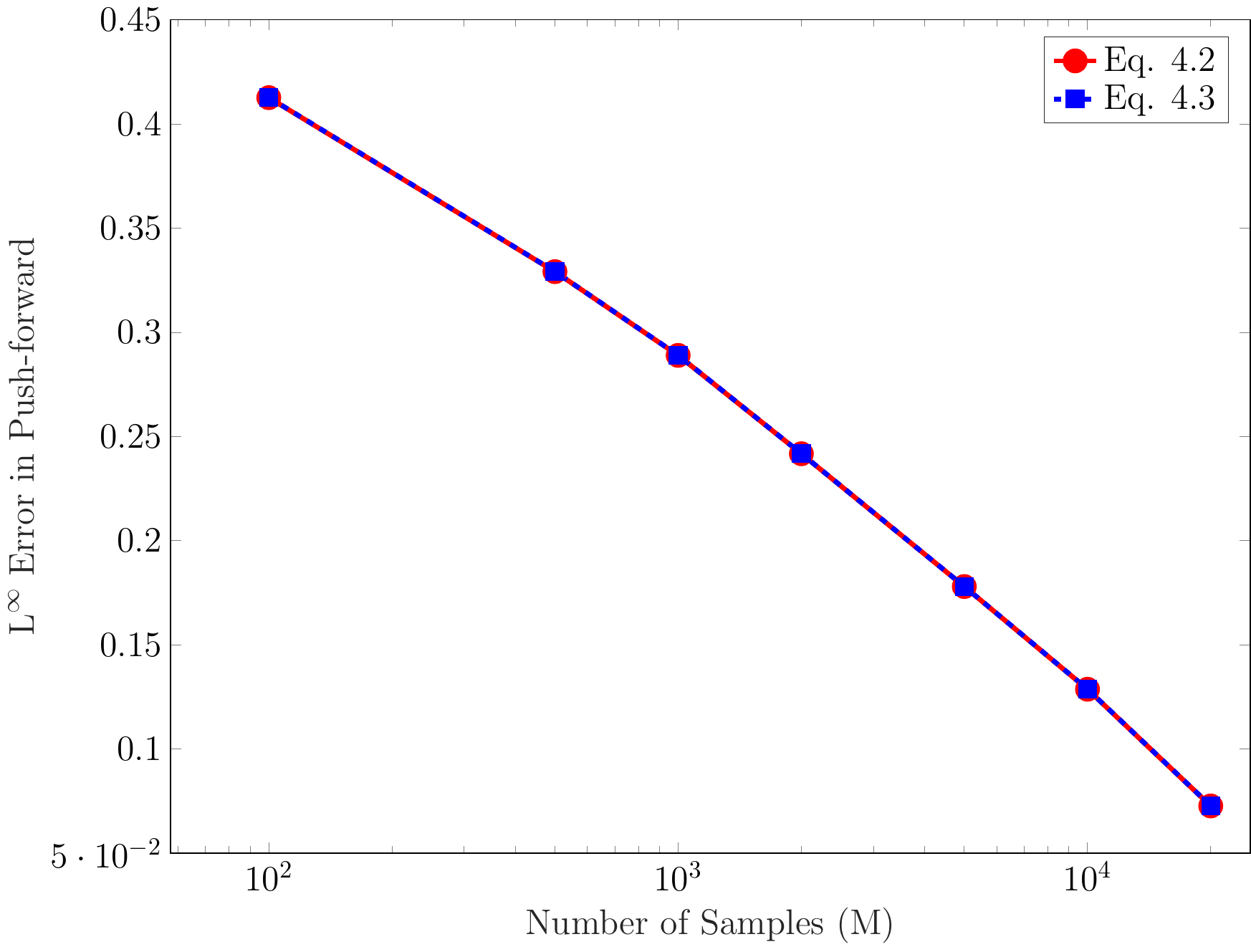}\hspace{0.3cm}
\includegraphics[width=0.45\textwidth]{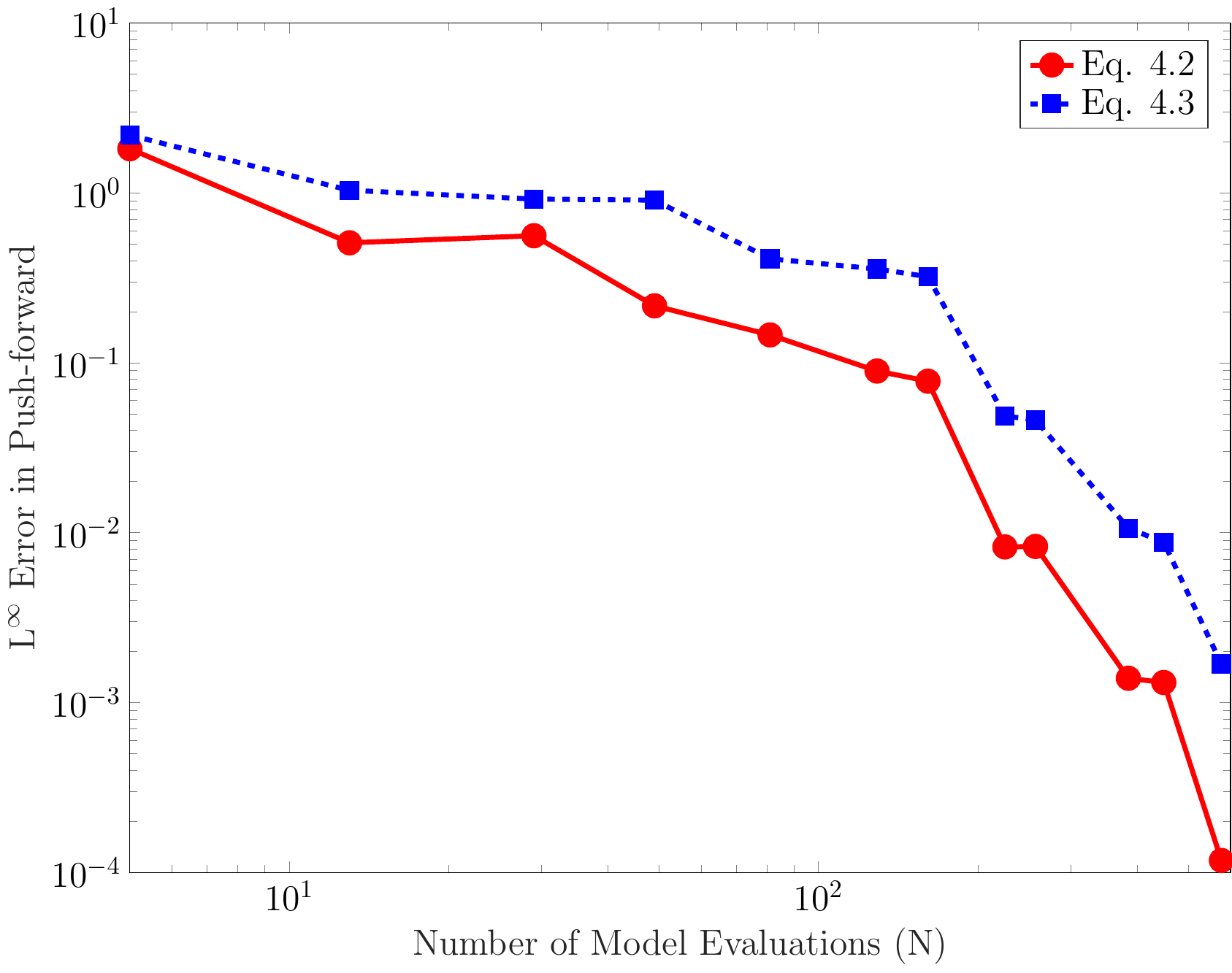}
\end{center}
\caption{Convergence of the push-forward of the prior as the KDE is refined and the sparse grid is held fixed at the highest level (left),
and as the sparse grid is refined and KDE is fixed with the maximum number of samples.}
\label{fig:ex1_fwd_error}
\end{figure}
Recall that the only difference between \eqref{eq:qoiaKDE_pf_conv_surrerror} and \eqref{eq:qoiaKDE_pf_error_surrerror} is
whether we evaluate the KDE approximation at the reference QoI values or at the approximate QoI values.
Since we are using a fairly accurate response surface approximation, these pointwise errors are relatively small
and the difference between the two estimates is negligible.

Next, we fix $\numsamp=50,000$ to assess the error in the push-forward of the prior due to the
response surface approximation.
In Figure~\ref{fig:ex1_fwd_error} (right), we see that the push-forward of the prior converges rapidly as the
pointwise error in the response surface approximation decreases.
In this case, we observe a difference between the error estimates given by \eqref{eq:qoiaKDE_pf_conv_surrerror} and \eqref{eq:qoiaKDE_pf_error_surrerror} due to difference in where the KDE is evaluated.

\subsubsection{Inverse problem}\label{subsubsec:peaks_INV}
The goal of this section is to verify Corollary~\ref{cor:surrerror_post}.
We use the model introduced in Section~\ref{subsubsec:peaks_FWD}.
To formulate a inverse problem, we assume that $\obsdens \sim N(2.3, 0.04)$
and use \eqref{eq:postpdf} to compute the posterior density.
For both the reference solution and each sparse grid approximation,
we use the 50,000 samples with a Gaussian KDE to approximate the push-forward of the prior.
The corresponding approximations of the posterior for the level-4 and level-8
sparse grid approximations are shown in Figure~\ref{fig:ex1_post} along with the posterior from the reference solution.
\begin{figure}[ht]
\begin{center}
\includegraphics[width=0.32\textwidth,trim={1cm 5cm 1cm 5cm},clip]{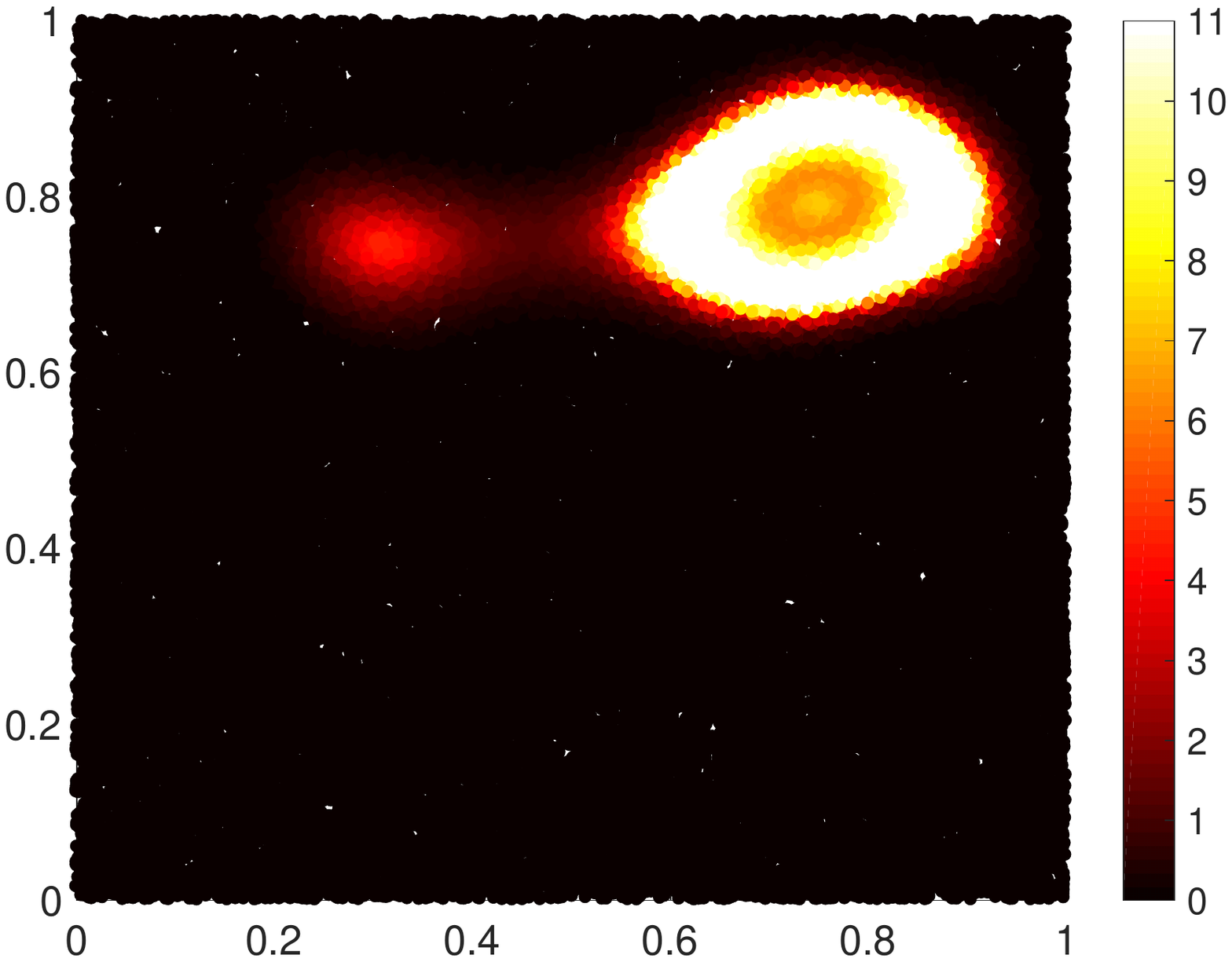}
\includegraphics[width=0.32\textwidth,trim={1cm 5cm 1cm 5cm},clip]{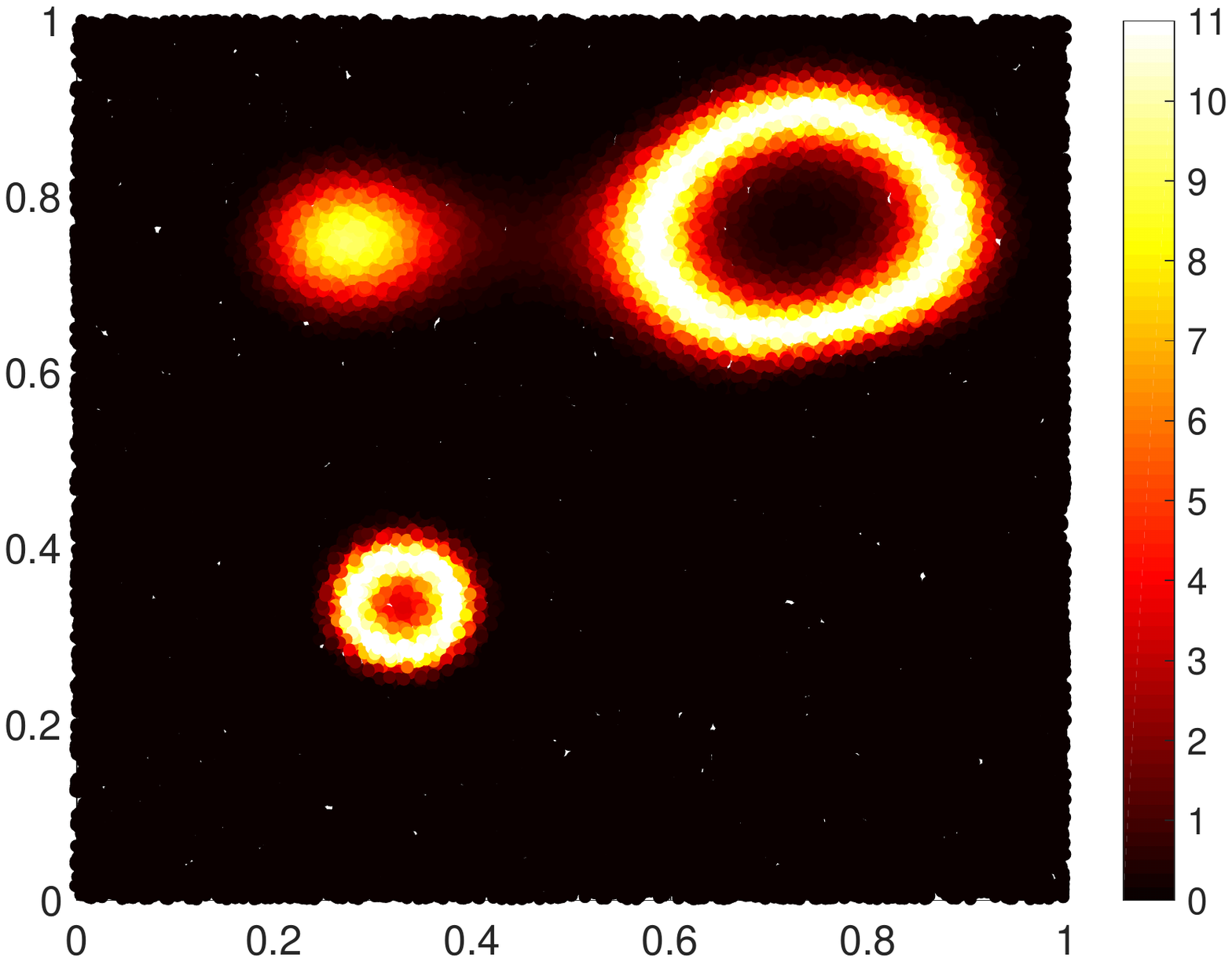}
\includegraphics[width=0.32\textwidth,trim={1cm 5cm 1cm 5cm},clip]{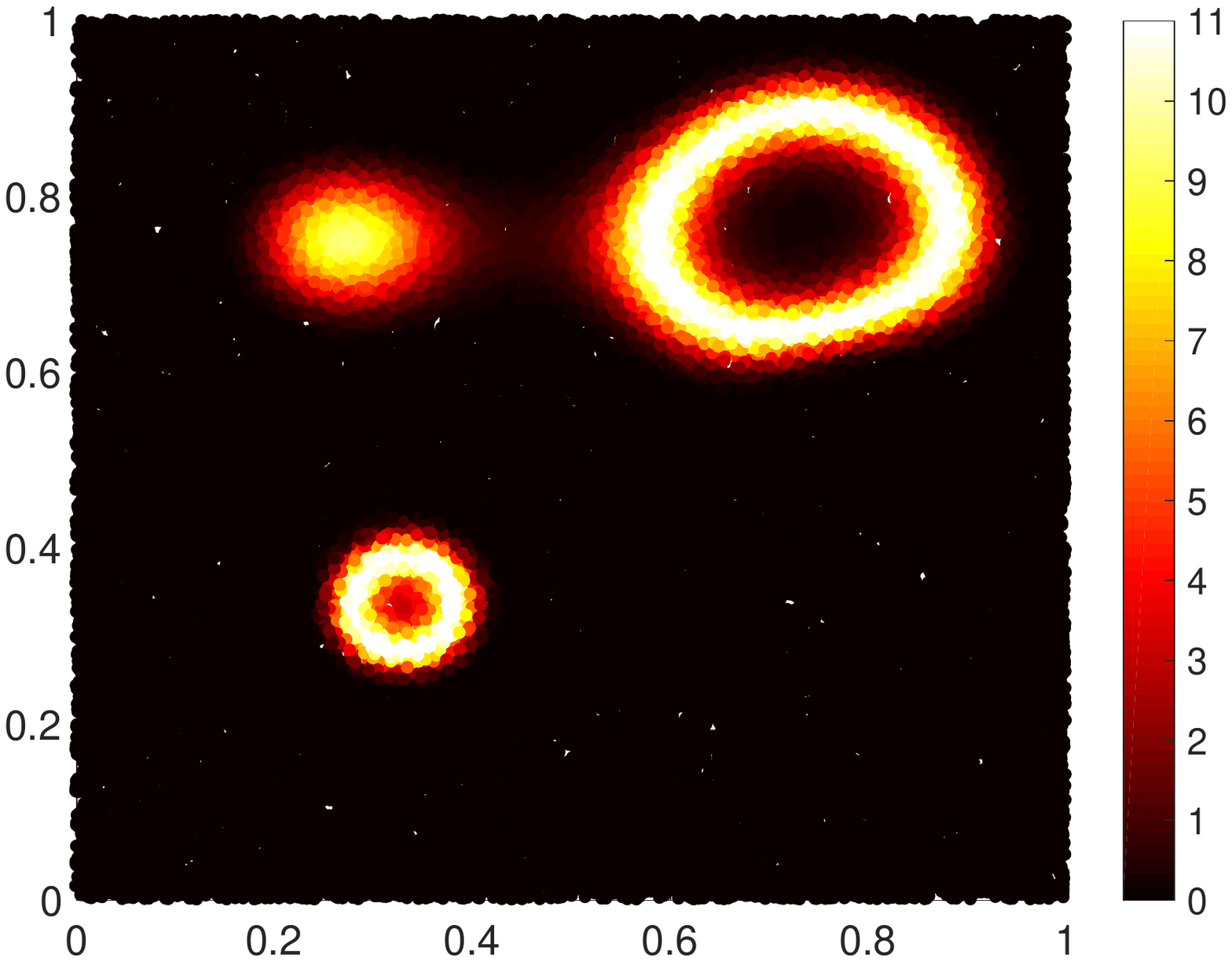}
\end{center}
\caption{Posterior corresponding to the level-4 sparse grid approximation (left), the level-8 sparse grid approximation (middle), and the reference solution (right).}
\label{fig:ex1_post}
\end{figure}
The level-4 sparse grid provides a poor approximation of the response surface and the
corresponding posterior contains significant error.
The posterior corresponding to the level-8 sparse grid appears to be much closer to
the reference solution.

We use the standard rejection sampling strategy described in~\cite{cbayes} to accept a subset of these samples for the posterior.
The accepted samples for the level-4 and level-8 sparse grid approximation of the posterior are shown in Figure~\ref{fig:ex1_postsamp} along with the samples accepted from the reference solution.
\begin{figure}[ht]
\begin{center}
\includegraphics[width=0.32\textwidth,trim={1cm 5cm 1cm 5cm},clip]{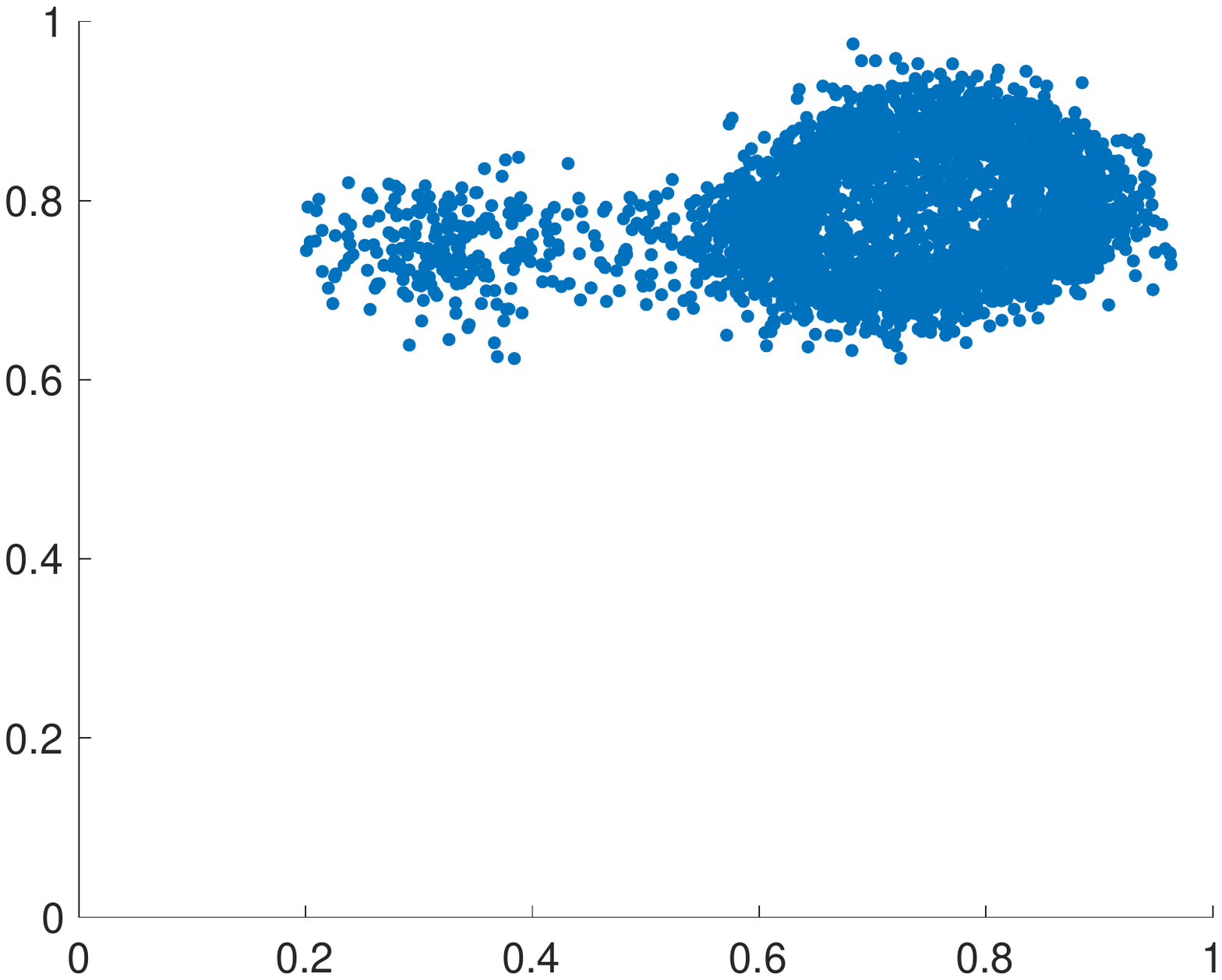}
\includegraphics[width=0.32\textwidth,trim={1cm 5cm 1cm 5cm},clip]{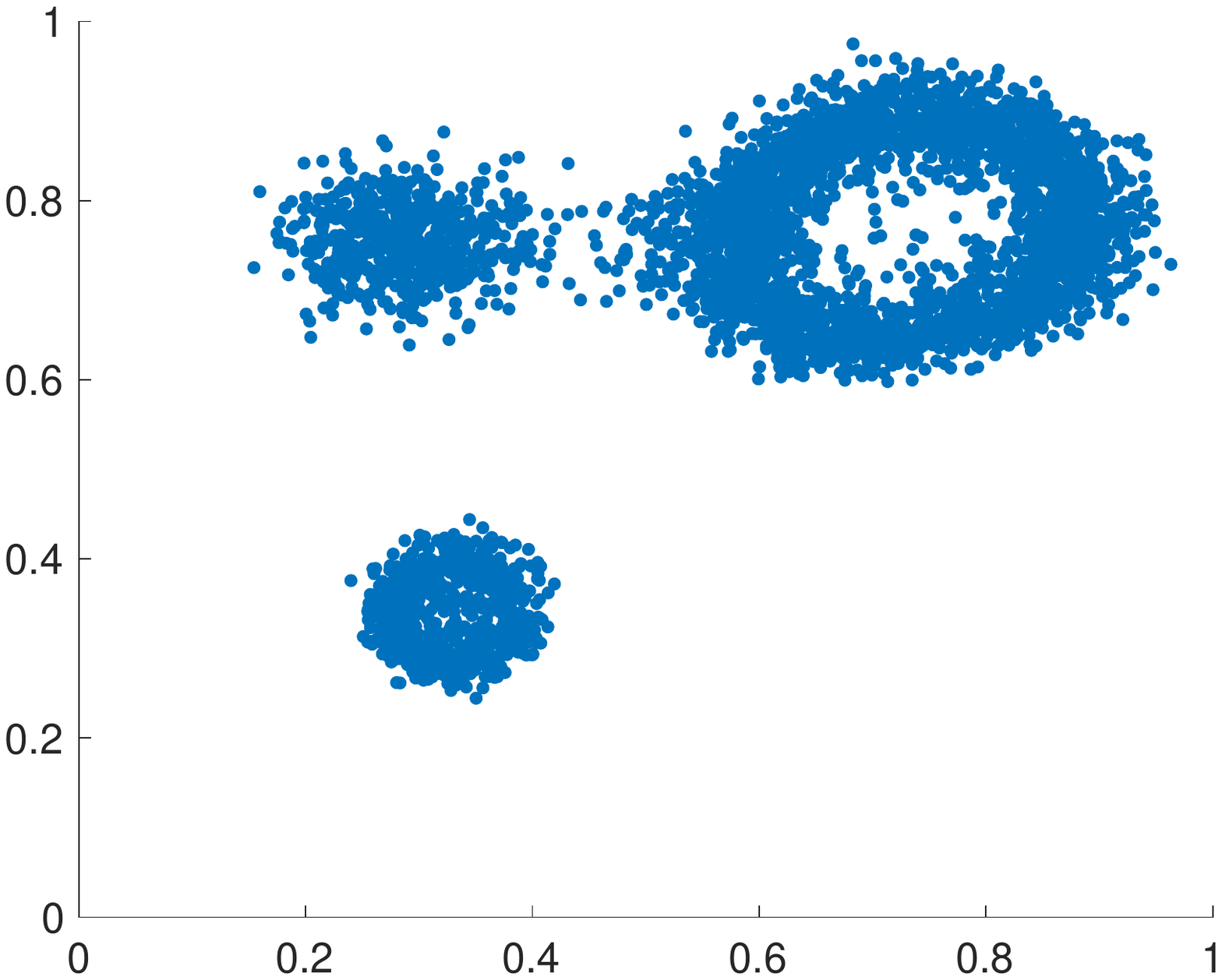}
\includegraphics[width=0.32\textwidth,trim={1cm 5cm 1cm 5cm},clip]{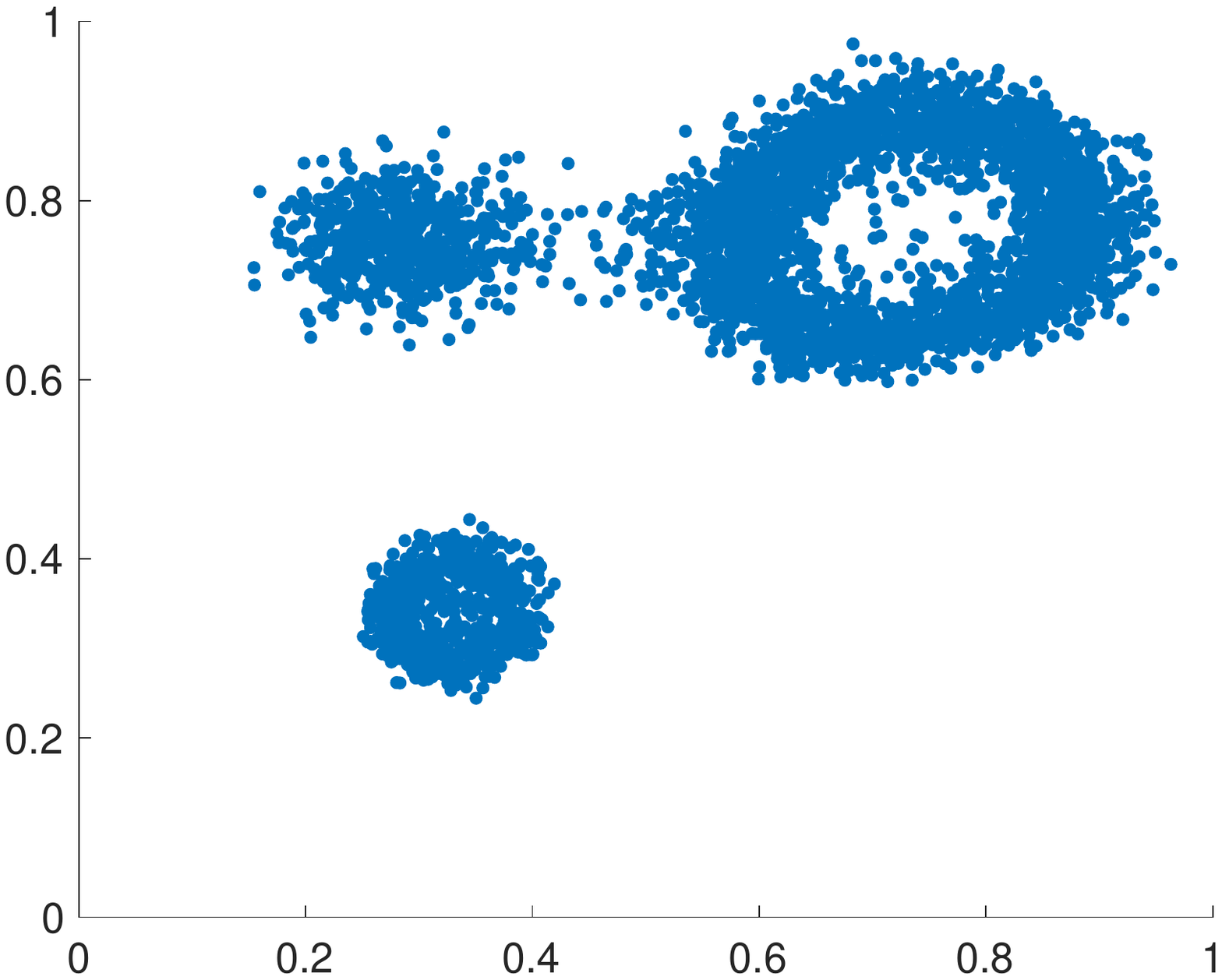}
\end{center}
\caption{Samples from the posterior corresponding to the level-4 sparse grid (left), the level-8 sparse grid approximation (middle), and the reference solution (right).}
\label{fig:ex1_postsamp}
\end{figure}

In Table~\ref{tab:ex1_diagnostics}, we show the diagnostic data on the posterior densities obtained using different sparse grid levels.
\begin{table}[ht!]
\begin{center}
\begin{tabular}{c|c|c|c|c|c|c} \hline
& Level-2 & Level-4 & Level-8 & Level-12 & Reference & Truth \\ \hline
$\text{I}(\postdensaKDE)$               & 0.001    & 0.930    & 0.983    & 0.983    & 0.983    & 1.000 \\ \hline
$\text{KL}(\priordens : \postdensaKDE)$ & -0.004   & 2.163    & 1.981    & 1.981    & 1.981    & UNKN \\ \hline
Mean PF-post                            & 1.616    & 2.279    & 2.304    & 2.303    & 2.303    & 2.300 \\ \hline
Var. PF-post                            & 2.369e-3 & 2.915e-2 & 4.168e-2 & 4.197e-2 & 4.192e-2 & 4.000e-2 \\ \hline
\end{tabular}
\end{center}
\caption{Comparison of the integral of the posterior, the KLD from the prior to the posterior, and the mean and variance of the push-forward of the posterior obtained using various sparse grid approximations in Section~\ref{subsubsec:peaks_INV}.}
\label{tab:ex1_diagnostics}
\end{table}
The integral of the posterior clearly indicates that the level-2 sparse grid approximation does not
satisfy Assumption~\ref{assump:approxdom}.
Moreover, the mean and variance of the push-forward of the posterior do not match the
corresponding values for $\obsdens$.
Thus, the level-2 sparse grid cannot be used to solve the inverse problem.
The diagnostic data for the level-4 is much better, but it is still not sufficient to allow this approximate model to be used to
solve the inverse problem.
On the other hand, the information for the level-8 and level-12 sparse grid approximations
indicate that Assumption \ref{assump:approxdom} is satisfied and that these approximate
models can be used to solve the inverse problem.
In fact, this is true for the level-5 sparse grid and all higher-order approximations.
Thus, throughout the remainder of this section we only use levels 5-12.
We emphasize that while the diagnostic data is useful to assess the usability of an approximate model,
it does not provide any information regarding the accuracy of the posterior.

We first seek to isolate the KDE contribution to the error in the posterior
by fixing the sparse grid approximation at level-12 and using smaller subsets of the 50,000 samples as in Section~\ref{subsubsec:peaks_FWD}.
In Figure~\ref{fig:ex1_error} (left) we plot the error in the posterior density
as the number of samples used to compute the KDE increases.
\begin{figure}[ht]
\begin{center}
\includegraphics[width=0.47\textwidth]{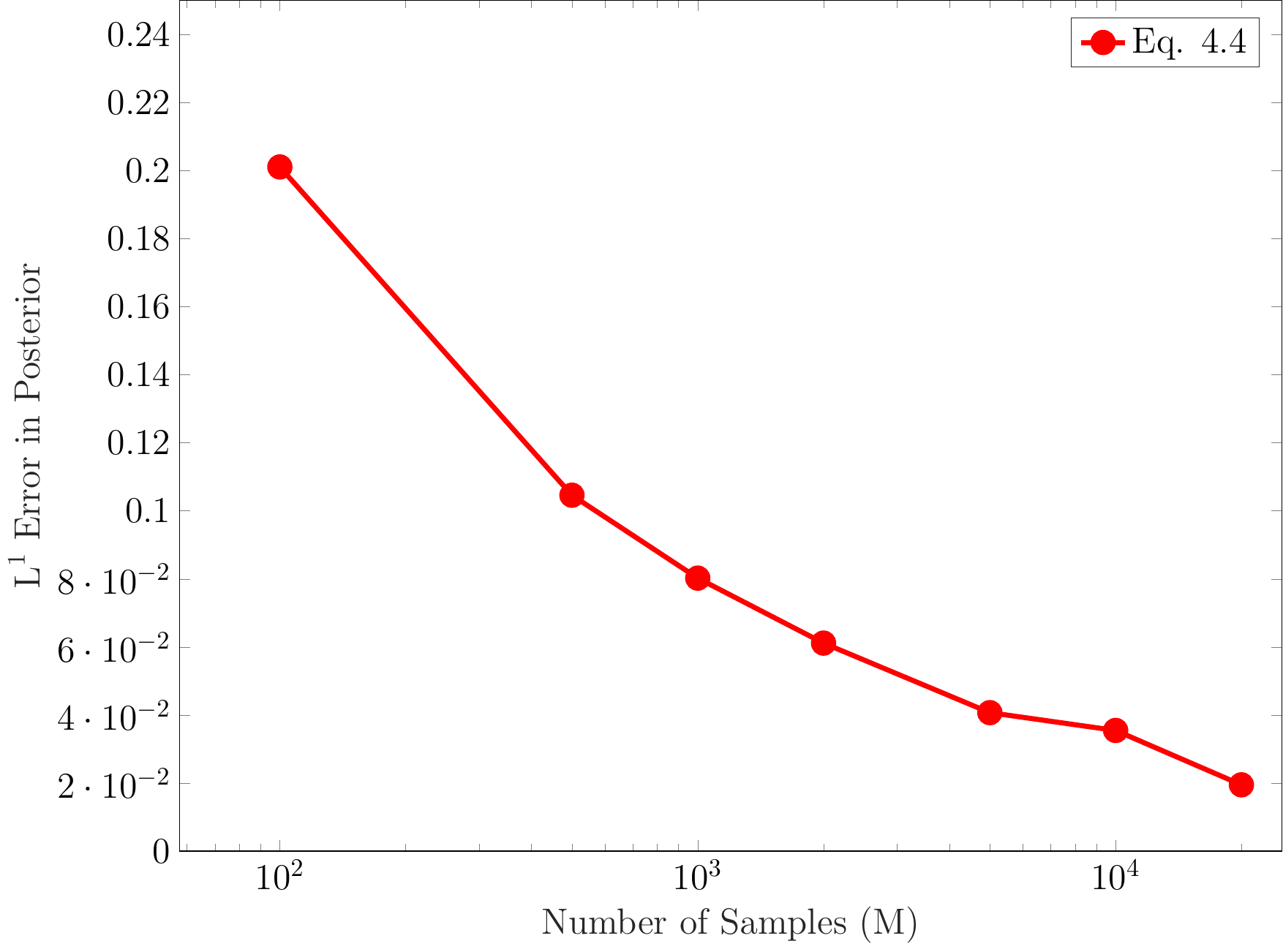}\hspace{0.3cm}
\includegraphics[width=0.45\textwidth]{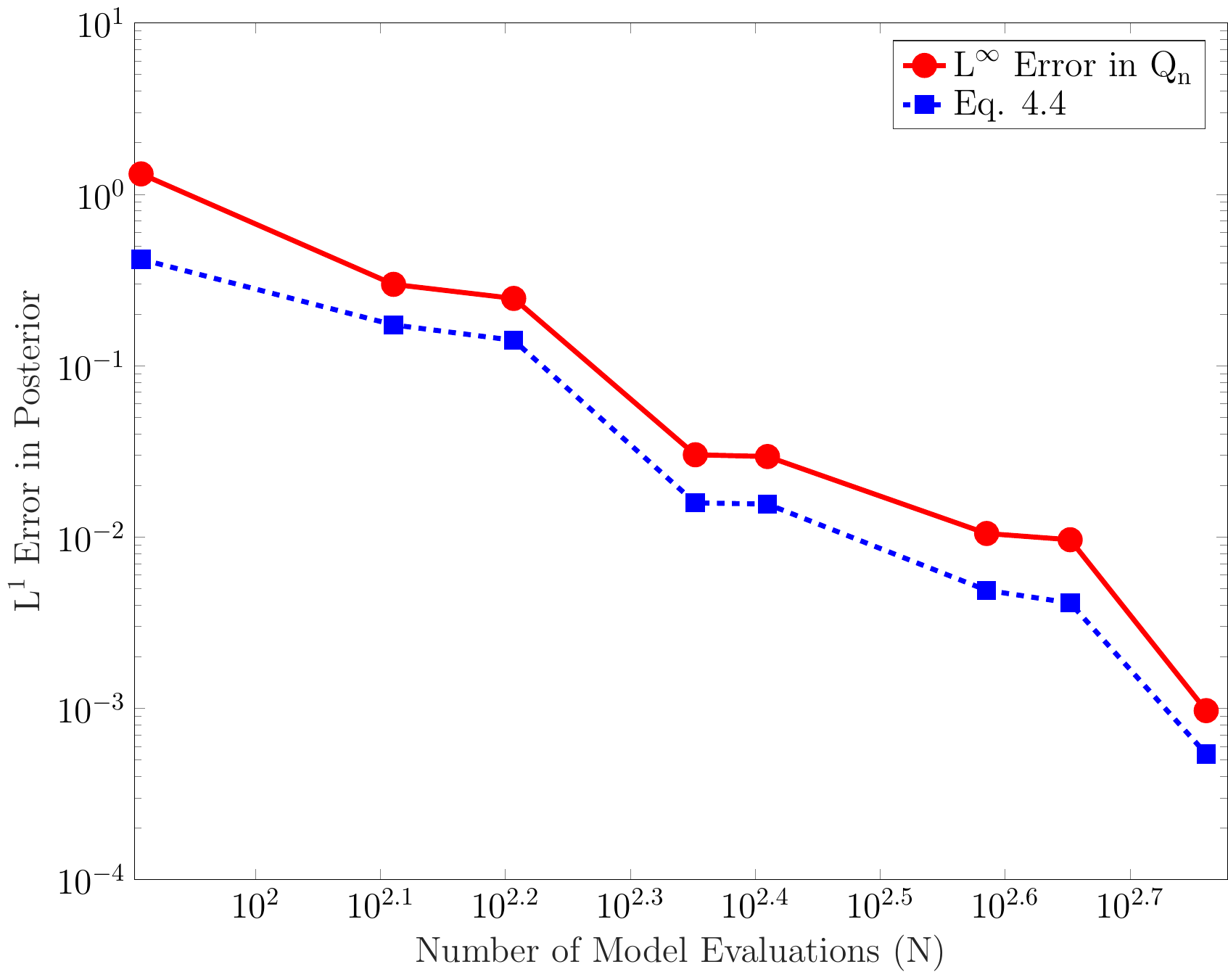}
\end{center}
\caption{Convergence of the posterior in the $L^1$-norm as the KDE approximation is refined (left) and as the sparse grid is refined (right).  For reference, we also include the $L^\infty$ error in the response surface approximation in the right image.}
\label{fig:ex1_error}
\end{figure}
Next, we fix $M=50,000$ and assess the accuracy in the posterior as the sparse grid approximation is refined.
As mentioned in the previous section, by utilizing the same set of samples for the reference solution and
for each response surface approximation, we are able to isolate the contribution of the sparse grid approximation to the error.
In Figure~\ref{fig:ex1_error} (right), we clearly see that the error in the posterior, measured in the $L^1$-norm,
decays at the same rate as the $L^\infty$-error in the response surface approximation.

\subsection{Discretized Partial Differential Equations}\label{subsec:discrete}
Consider the following general system of equations,
\begin{equation}\label{eq:genlin}
\frac{\partial \sol}{\partial t}+\bA(\brv;\sol) = \bzero,
\end{equation}
defined on $\pdom \times (0,T]$ where $\pdom \subset \mathbb{R}^s$, $s=1,2,3$, is a polygonal
(polyhedral) and bounded domain with boundary $\partial \pdom$.
As throughout the paper, the random parameter $\brv$ reflects sources of uncertainty, for example, uncertain initial or boundary conditions, forcing etc.
The solution operator's dependency on $\brv$ implies that both $\sol:=\sol(\bx,t,\brv)$
and $Q(\lambda) := Q(\sol(\bx,t,\brv))$ are also uncertain and may be modeled as a random processes.
For the sake of simplicity, we assume that $Q$ is a bounded continuous linear functional of $\sol$.

In this paper we assume that $\bA$ is convex and has smooth second derivatives.
Specific examples of $\bA$ and $\sol$ will be given in subsequent sections.
We assume that sufficient initial and boundary conditions are provided so that \eqref{eq:genlin} is well-posed in the
sense that there exists a solution for a.~e. $\brv \in \pspace$.

Let $\Th$ be a conforming partition of $\pdom$, composed of $N_h$ closed convex volumes of maximum diameter $h$.
We assume that the mesh is regular in the sense of Ciarlet~\cite{ref:Ciar78} and take $\Th$ to be a conforming finite element mesh consisting of simplices or parallelopipeds.
A fully discrete scheme for any $\brv \in \dom$ can be obtained by letting $I_j=(t_{j-1},t_j)$ and time steps $\Delta t = \max_j t_j - t_{j-1}$
denote the discretization of $[0,T]$ as $0 = t_0 < t_1 < \cdots < t_{N_t} = T$.
In this paper, we assume that a first-order Euler scheme (either implicit or explicit) is used to
discretize in time.
To define the sequence of approximate models, $(\qoia)$, we define sequences of discretizations,
\[ h_0 \geq h_1 \geq \ldots, \quad \text{and} \quad \Delta t_0 \geq \Delta t_1 \geq \ldots,\]
where $h_n,\Delta t_n \rightarrow 0$ as $n\rightarrow \infty$.
Then we define $\qoia$ to be the approximate model that uses $h_n$ and $\Delta t_n$ respectively.
In cases where a unique and sufficiently regular solution exists, one can obtain the following error bound
using duality arguments (see e.g.~\cite{GilesSuli,eehj_book_96,BDW,oden2001goal,Beck_Ran_opt_a_post_FE_01})
\begin{equation}\label{eq:deterrbndQoI}
\|\qoi-\qoia\|_{L^\infty(\pspace)} \leq C(\sol) (h_n^{r+\alpha} + \Delta t_n),
\end{equation}
for some $\alpha \in [0,1]$
where $C(\sol)$ depends but does not depend on $\Delta t_n$ or $h_n$.
The parameter $r$ is determined by the regularity of the solution and the order of accuracy of the spatial discretization.
We note that $C(\sol)$ typically depends on $\brv$ and $\alpha$ depends on the regularity
of $\sol$.
Combining \eqref{eq:deterrbndQoI} with Theorem~\ref{thm:kdepf_convergence} gives the following result.
\begin{cor}\label{cor:discerror}
Under the assumptions of Theorem~\ref{thm:kdepf_convergence}, the error in the push-forward of the prior using a discretization of \eqref{eq:genlin} satisfies
\begin{equation}\label{eq:qoiaKDE_pf_conv_discerror}
\norm{\pfpriordens(q)-\pfpriordensaKDE(q)}_{L^\infty(\dspace)} \leq C \left( \left(\frac{\log \numsamp}{\numsamp}\right)^{\frac{s}{2s+\dspacedim}} + C(\sol) (h_n^{r+\alpha} + \Delta t_n) \right),
\end{equation}
and
\begin{equation}\label{eq:qoiaKDE_pf_error_discerror}
\norm{\pfpriordens(\qoi)-\pfpriordensaKDE(\qoia)}_{L^\infty(\pspace)} \leq C \left(\left(\frac{\log \numsamp}{\numsamp}\right)^{\frac{s}{2s+\dspacedim}} + C(\sol) (h_n^{r+\alpha} + \Delta t_n) \right).
\end{equation}
\end{cor}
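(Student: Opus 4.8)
The plan is to recognize that Corollary~\ref{cor:discerror} is an immediate specialization of Theorem~\ref{thm:kdepf_convergence}, the only additional ingredient being the verification that the sequence of discretized models $(\qoia)$ converges to $\qoi$ in $L^\infty(\pspace)$. First I would note that, by construction, $h_n \to 0$ and $\Delta t_n \to 0$ as $n\to\infty$, so the right-hand side of the a priori bound \eqref{eq:deterrbndQoI} tends to zero — provided the prefactor $C(\sol)$ admits a bound uniform over $\pspace$. Since $C(\sol)$ depends on $\brv$ through the regularity of $\sol(\bx,t,\brv)$, I would make explicit the (mild) assumption that $\bar{C} := \esssup_{\brv\in\pspace} C(\sol) < \infty$, which is implicit already in writing \eqref{eq:deterrbndQoI} as an $L^\infty(\pspace)$ estimate and holds, e.g., when $\pspace$ is precompact and the solution operator depends continuously on $\brv$ in a sufficiently strong norm. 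Then $\norm{\qoi-\qoia}_{L^\infty(\pspace)} \leq \bar{C}\,(h_n^{r+\alpha} + \Delta t_n) \to 0$, so the hypothesis \eqref{eq:qoia_converge} of Theorem~\ref{thm:kdepf_convergence} is satisfied.

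With $L^\infty$-convergence of $(\qoia)$ in hand, I would invoke Theorem~\ref{thm:kdepf_convergence} directly: the regularity conditions on $\pfpriordens$, $K$, and the bandwidth parameter required by Theorem~\ref{thm:kde_error}, together with Lipschitz continuity of $K$, are exactly the ``assumptions of Theorem~\ref{thm:kdepf_convergence}'' carried over in the statement of the corollary, and Assumptions~\ref{assump:pfprior} and~\ref{assump:surrogate} are subsumed among them. This yields \eqref{eq:qoiaKDE_pf_conv} and \eqref{eq:qoiaKDE_pf_error} with $\norm{\qoi-\qoia}_{L^\infty(\pspace)}$ in the bound. Substituting \eqref{eq:deterrbndQoI} — keeping the prefactor as $C(\sol)$, with $\bar{C}$ absorbed into the generic constant $C$ — for $\norm{\qoi-\qoia}_{L^\infty(\pspace)}$ then gives precisely \eqref{eq:qoiaKDE_pf_conv_discerror} and \eqref{eq:qoiaKDE_pf_error_discerror}. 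All of the triangle-inequality bookkeeping has already been performed inside the proof of Theorem~\ref{thm:kdepf_convergence}, so nothing further is needed.

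The only genuine subtlety — and hence the step I would flag as the main obstacle, modest as it is — is the passage from the pointwise-in-$\brv$ duality-based error estimate \eqref{eq:deterrbndQoI} to a uniform one: the classical goal-oriented finite element bounds control the QoI error for each fixed $\brv$ with a constant governed by solution regularity that can in principle degenerate as $\brv$ ranges over $\pspace$. Promoting \eqref{eq:deterrbndQoI} to a statement in the $L^\infty(\pspace)$ norm therefore requires either an explicit uniform estimate for $C(\sol)$ or the standing assumption that $\esssup_{\brv\in\pspace} C(\sol)<\infty$; I would state this assumption explicitly and remark that it is standard in the parametric-PDE literature and compatible with the analyticity-type hypotheses already used for the sparse-grid corollaries in the preceding subsection.
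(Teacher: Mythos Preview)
Your proposal is correct and matches the paper's own treatment: the paper simply states that Corollary~\ref{cor:discerror} follows by combining \eqref{eq:deterrbndQoI} with Theorem~\ref{thm:kdepf_convergence}, which is exactly the substitution you describe. Your additional remark about the uniformity of $C(\sol)$ over $\pspace$ is a valid and careful observation that the paper leaves implicit in writing \eqref{eq:deterrbndQoI} as an $L^\infty(\pspace)$ estimate.
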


Combining \eqref{eq:deterrbndQoI} with Theorem~\ref{thm:posterior_convergenceKDE} gives the following result.
\begin{cor}\label{cor:discerror_post}
Under the assumptions of Theorem~\ref{thm:posterior_convergenceKDE}, the error in the posterior using a discretization of \eqref{eq:genlin} satisfies
\begin{equation}\label{eq:qoiaKDE_post_conv_discerror}
\norm{\postdens(\lambda)-\postdensaKDE(\lambda)}_{L^1(\pspace)} \leq C \left( \left(\frac{\log \numsamp}{\numsamp}\right)^{\frac{s}{2s+\dspacedim}} + C(\sol) (h_n^{r+\alpha} + \Delta t_n) \right).
\end{equation}
\end{cor}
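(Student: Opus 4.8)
The plan is to obtain Corollary~\ref{cor:discerror_post} as a direct specialization of Theorem~\ref{thm:posterior_convergenceKDE}: essentially the only work is to confirm that the sequence of discretized models $(\qoia)$ — where $\qoia$ uses the mesh size $h_n$ and time step $\Delta t_n$ — meets the hypotheses of that theorem, and then to substitute the \emph{a priori} error estimate \eqref{eq:deterrbndQoI} for the abstract quantity $\|\qoi-\qoia\|_{L^\infty(\pspace)}$ appearing in \eqref{eq:post_convKDE}.

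First I would check the convergence requirement $\qoia \to \qoi$ in $L^\infty(\pspace)$. By construction $h_n \to 0$ and $\Delta t_n \to 0$ as $n \to \infty$, so \eqref{eq:deterrbndQoI} gives $\|\qoi-\qoia\|_{L^\infty(\pspace)} \le C(\sol)\,(h_n^{r+\alpha}+\Delta t_n) \to 0$, provided the stability constant is finite uniformly over the parameter space, i.e.\ $\esssup_{\lambda\in\pspace} C(\sol(\cdot,\cdot,\lambda)) < \infty$; this is the sense in which \eqref{eq:deterrbndQoI} is read as an $L^\infty(\pspace)$ bound, and it is where the well-posedness and regularity hypotheses on $\bA$ and $\sol$, together with precompactness of $\pspace$, enter. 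The remaining hypotheses of Theorem~\ref{thm:posterior_convergenceKDE} — Assumptions~\ref{assump:pfprior},~\ref{assump:surrogate},~\ref{assump:dom},~\ref{assump:approxdom},~\ref{assump:approxdomKDE}, the regularity assumptions on $\pfpriordens$, $K$ and the bandwidth from Theorem~\ref{thm:kde_error}, precompactness of $\pspace$, and Lipschitz continuity of $\obsdens$ — are precisely the standing hypotheses in the statement ("under the assumptions of Theorem~\ref{thm:posterior_convergenceKDE}"), so nothing further need be verified; in particular Assumption~\ref{assump:surrogate} (uniform boundedness and a.u.e.c.\ of the push-forward densities $\pfpriordensa$ built from the discretized maps) is inherited as a hypothesis rather than re-derived.

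With the hypotheses in place, Theorem~\ref{thm:posterior_convergenceKDE} yields
\[
\norm{\postdensaKDE(\lambda) - \postdens(\lambda)}_{L^1(\pspace)} \leq C \left(\left(\frac{\log \numsamp}{\numsamp}\right)^{\frac{s}{2s+\dspacedim}} + \|\qoi - \qoia\|_{L^\infty(\pspace)} \right),
\]
and substituting \eqref{eq:deterrbndQoI} — and absorbing the constant $C$ from Theorem~\ref{thm:posterior_convergenceKDE} together with $C(\sol)$-independent factors into a single generic $C$ — gives \eqref{eq:qoiaKDE_post_conv_discerror}. The one genuine point requiring care, and thus the "main obstacle" such as it is, is the uniform-in-$\lambda$ interpretation of the duality-based estimate \eqref{eq:deterrbndQoI}: the stability constant $C(\sol)$ (and potentially the exponents $r$ and $\alpha$, which are tied to the regularity of $\sol$) generically depend on $\brv$, so to legitimately treat $\|\qoi-\qoia\|_{L^\infty(\pspace)}$ as $\mathcal{O}(h_n^{r+\alpha}+\Delta t_n)$ one must argue that this dependence is bounded and the rate uniform over the precompact parameter set; once this is granted, the corollary is immediate.
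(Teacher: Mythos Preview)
Your proposal is correct and matches the paper's approach exactly: the paper simply states that the corollary follows by ``combining \eqref{eq:deterrbndQoI} with Theorem~\ref{thm:posterior_convergenceKDE},'' which is precisely the substitution you describe. Your observation about the uniform-in-$\lambda$ control of $C(\sol)$ is a reasonable point of care that the paper glosses over, but otherwise there is nothing more to add.
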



\subsubsection{Forward problem}\label{subsubsec:porous_FWD}
The goal of this section is to verify the convergence rates in
Corollary~\ref{cor:discerror}.
Consider a single-phase incompressible flow model:
\begin{equation}\label{eq:porous}
\begin{cases}
-\nabla \cdot (K(\lambda) \nabla u) = 0, & (x,y)\in\pdom = (0,1)^2,\\
u = 1, & x=0, \\
u = 0, & x=1, \\
K\nabla p \cdot \mathbf{n} = 0, & y=0 \text{ and } y=1.
\end{cases}
\end{equation}
Here, $u$ is the pressure field and $K$ is the permeability field which we assume is a scalar field given by a Karhunen-Lo\'eve expansion of the log transformation, $Y = \log{K}$, with
\[Y(\lambda) = \overline{Y} + \sum_{i=1}^\infty \xi_i(\lambda)\sqrt{\eta_i}f_i(x,y),\]
where $\overline{Y}$ is the mean field and $\xi_i$ are mutually uncorrelated random variables with zero mean and unit variance \cite{ganis2008stochastic,wheeler2011multiscale}.
The eigenvalues, $\eta_i$, and eigenfunctions, $f_i$, are computed using an assumed functional form for the covariance matrix \cite{zhang2004efficient,Schwab2006100}.
We assume a correlation length of $0.01$ in each spatial direction and truncate the expansion at 100 terms.
This choice of truncation is purely for the sake of demonstration.
In practice, the expansion is truncated once a sufficient fraction of the energy in the eigenvalues is retained~\cite{zhang2004efficient,ganis2008stochastic}.
Our quantity of interest is the pressure at the point $(0.0540,0.5487) \in \Omega$.
The prior is a multivariate standard normal density $\priordens \sim N({\mathbf 0},{\mathbf I})$ where ${\mathbf I}$ is the standard identity matrix.

To approximate solutions to the PDE in Eq.~\eqref{eq:porous} we use a finite element discretization with continuous piecewise bilinear basis functions defined on a uniform spatial grid.
We vary the number of grid points in each spatial direction ($h=1/10,1/20,1/40,1/80,1/160$) to construct a sequence of approximate models, i.e., each $Q_n$ is associated with a particular value of $h$.
The asymptotic value of the quantity of interest is unknown, but for each sample in $\pspace$ we have
the QoI on a sequence of grids so we use Richardson extrapolation to estimate a reference solution for the QoI.
We generate 10,000 samples from the prior and evaluate each discretization of the PDE model for each of these realizations.
We use a standard Gaussian KDE to approximate the push-forward of the prior in the 1-dimensional output space.
In Figure~\ref{fig:ex2_error} (left), we plot the convergence of the push-forward of the prior as the
physical discretization is refined.
\begin{figure}[ht]
\begin{center}
\includegraphics[width=0.45\textwidth]{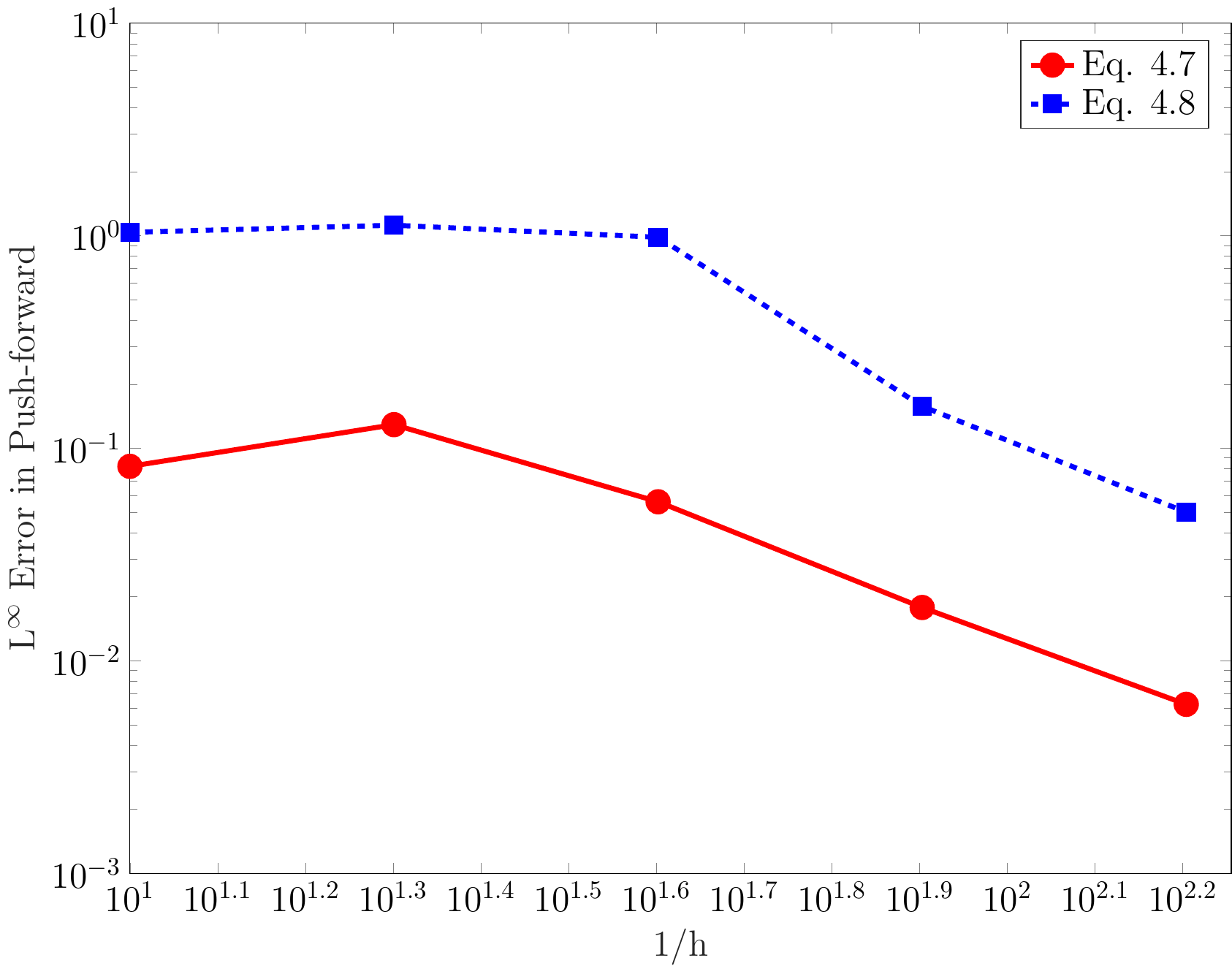}\hspace{0.3cm}
\includegraphics[width=0.45\textwidth]{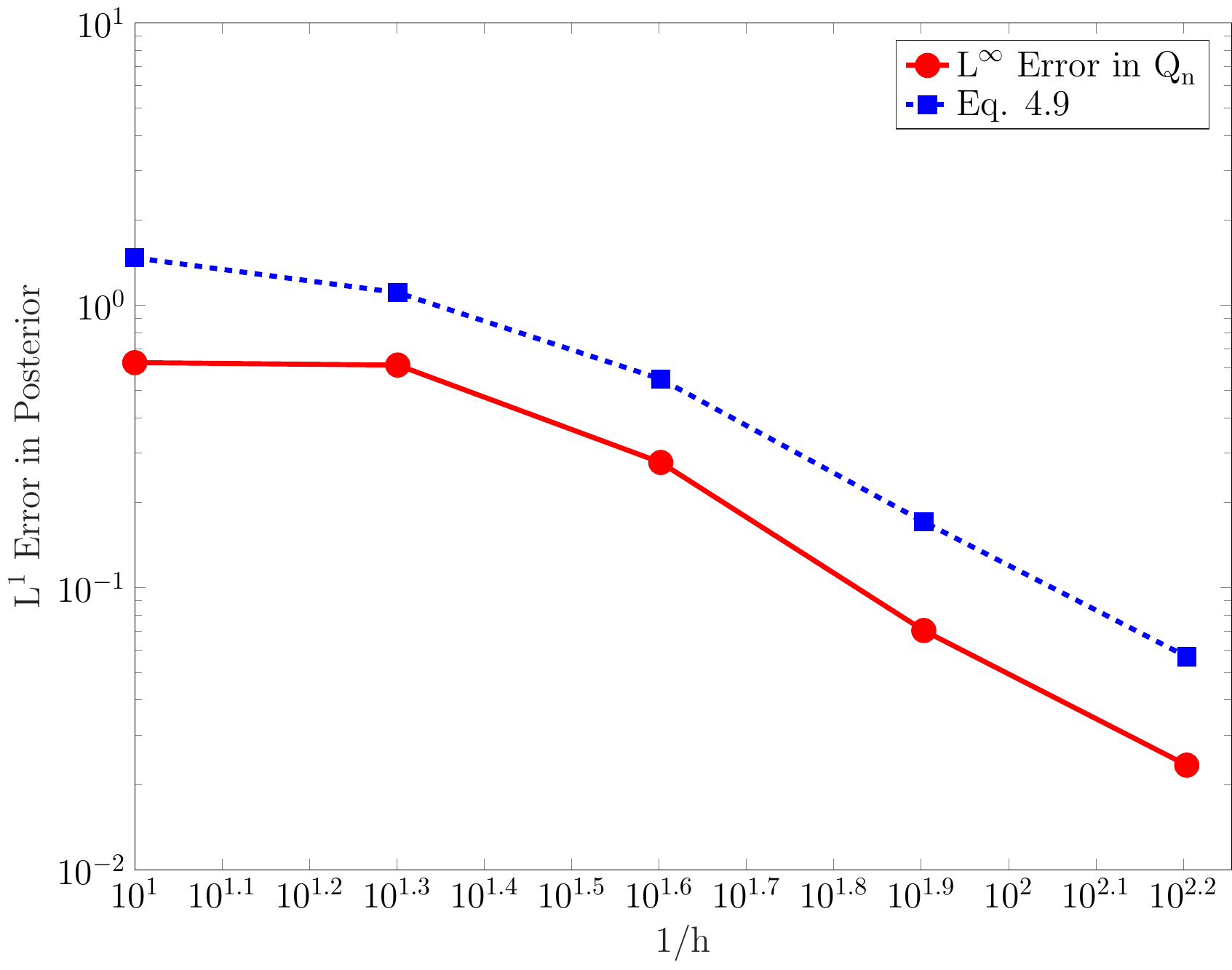}
\end{center}
\caption{Convergence of the push-forward of the prior (left) and convergence of the QoI in the $L^\infty$-norm and the posterior in the $L^1$-norm (right) as the spatial approximation is refined.}
\label{fig:ex2_error}
\end{figure}
We do see a significant difference between \eqref{eq:qoiaKDE_pf_conv_discerror} and \eqref{eq:qoiaKDE_pf_error_discerror},
but the errors eventually converge at approximately the same rate. The difference between the two curves is because \eqref{eq:qoiaKDE_pf_conv_discerror} evaluates error when the approximate push-forward density is evaluated using exact values of the QoI $q$, whereas  \eqref{eq:qoiaKDE_pf_error_discerror} evaluates error using the approximate push-forward densities evaluated at approximate values of the QoI. The later evaluation introduces an additional source of error and thus the error in \eqref{eq:qoiaKDE_pf_error_discerror} will always be
larger than the error in \eqref{eq:qoiaKDE_pf_conv_discerror}.

\subsubsection{Inverse problem}\label{subsubsec:porous_INV}
The goal of this section is to verify the convergence rate in
Corollary~\ref{cor:discerror_post}.
We use the model introduced in Section~\ref{subsubsec:porous_FWD}.
To formulate a inverse problem, we assume the observed density on the QoI is
given by $\obsdens \sim N(0.7,1.0\text{E-4})$.
The diagnostic information for the posteriors associated with the various levels of
spatial discretization is provided in Table~\ref{tab:ex2_diagnostics}.
\begin{table}[ht!]
\begin{center}
\begin{tabular}{c|c|c|c|c|c|c|c} \hline
& h=1/10 & h=1/20 & h=1/40 & h=1/80 & h=1/160 & Ref. & Truth \\ \hline
$\text{I}(\postdens)$ & 0.993 & 0.986 & 0.982 & 0.978 & 0.980 & 0.980 & 1.000 \\ \hline
$\text{KL}(\priordens : \postdens)$ & 1.344 & 1.344 & 1.326 & 1.341 & 1.353 & 1.357 & UNKN \\ \hline
Mean PF-post & 0.700 & 0.700 & 0.700 & 0.700 & 0.700 & 0.700 & 0.700 \\ \hline
Var.~PF-post & 0.997e-4 & 0.995e-4 & 0.978e-4 & 1.012e-4 & 1.003e-4 & 1.021e-4 & 1.000e-4 \\ \hline
\end{tabular}
\end{center}
\caption{Comparison of the integral of the posterior, the KLD from the prior to the posterior, and the mean and variance of the push-forward of the posterior obtained using various spatial approximations in Section~\ref{subsubsec:porous_INV}.}
\label{tab:ex2_diagnostics}
\end{table}
For this example, each of the approximate models satisfy Assumption~\ref{assump:approxdomKDE}
and provide consistent solutions to the inverse problem.
However, the accuracy in the posterior depends on the accuracy in the approximate model.
In Figure~\ref{fig:ex2_error} (right), we plot the $L^1$-norm of the error in the posterior along with
the $L^\infty$-norm of the error in the approximate model.
We see that the error in the posterior converges at the same rate as the error in the approximate model.

\subsection{Combined Discretizations Sparse Grid Approximations}\label{subsec:combined}
In this section we consider the common case when two forms of approximations are used to quantify uncertainty. Specifically we consider the situation when a sparse grid surrogate of a discretized model is used. In this setting, there are several ways to define the sequence $Q_n$.
We simply assume that the sequence is defined in such a way that for any $0<m<n$, we have
$h_n \leq h_m$, $\Delta t_n \leq \Delta t_m$ and $N_m \leq N_n$.
Combining Lemma~\ref{lemma:sg-point-wise-error} and \eqref{eq:deterrbndQoI} gives the following result.
\begin{lemma}
\label{lemma:total-point-wise-error}
For sufficiently smooth $\qoi$,
the isotropic level-$n$ sparse-grid~\eqref{eq:smolyak} based on Clenshaw-Curtis abscissas with $N_n$ points and a discretization of \eqref{eq:genlin} satisfies
\begin{align*}
\|\qoi-\qoia\|_{L^\infty(\pspace)} & \le C_1(\sigma) \numpts_n^{-\mu_1} + C_1(\sol)\left(h_n^{r+\alpha}+\Delta t_n\right)
\end{align*}
where the constant $C_1(\sigma)$ depends on the size of the region of analyticity $\sigma$ of $Q$ but not on the number of points in the sparse grid and $C_1(\sol)$ depends on the solution $\sol$ but not the mesh and temporal resolution $h$ and $k$, respectively.
\end{lemma}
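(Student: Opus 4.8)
The plan is to derive the stated bound from a single triangle inequality, routed through the fully discrete but \emph{un-interpolated} parameter-to-QoI map. Let $Q_{h_n}$ denote the map $\lambda \mapsto Q(\sol_{h_n,\Delta t_n}(\cdot,\cdot,\lambda))$ obtained by solving the first-order Euler/finite-element scheme on the mesh $\Th$ of size $h_n$ with time step $\Delta t_n$ for each fixed $\lambda$, with no sparse-grid approximation in $\lambda$. Since the level-$n$ sparse grid in \eqref{eq:smolyak} is assembled entirely from evaluations of this discrete model at the Clenshaw--Curtis abscissas $\lambda_{\bn,\bi}$, we have $\qoia = \intp{n}{Q_{h_n}}$, the level-$n$ sparse-grid interpolant of $Q_{h_n}$. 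I would route through $Q_{h_n}$ rather than through $\intp{n}{Q}$ precisely because the split $Q \to \intp{n}{Q} \to \qoia = \intp{n}{Q_{h_n}}$ leaves the term $\intp{n}{Q-Q_{h_n}}$, whose $L^\infty(\pspace)$ norm is controlled only up to the Lebesgue constant of the sparse grid, whereas routing through $Q_{h_n}$ avoids that factor. Thus
\begin{equation*}
\|\qoi - \qoia\|_{L^\infty(\pspace)} \le \|Q - Q_{h_n}\|_{L^\infty(\pspace)} + \|Q_{h_n} - \intp{n}{Q_{h_n}}\|_{L^\infty(\pspace)}.
\end{equation*}

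The first term is exactly the quantity estimated by the deterministic a posteriori/duality bound \eqref{eq:deterrbndQoI}, applied pointwise in $\lambda$ and then passed to the supremum over $\pspace$: since $Q$ is a bounded continuous linear functional of $\sol$, this gives $\|Q - Q_{h_n}\|_{L^\infty(\pspace)} \le C_1(\sol)\,(h_n^{r+\alpha} + \Delta t_n)$ with a constant independent of $h_n$ and $\Delta t_n$. For the second term I would observe that Lemma~\ref{lemma:sg-point-wise-error} is not special to the exact map: its proof (following Nobile--Tempone--Webster) uses only that the map admits an analytic extension to a polyellipse in each coordinate of $\lambda$, and the resulting rate $N_n^{-\mu_1}$ with $\mu_1 = \sigma/(1+\log 2\pspacedim)$ depends only on a lower bound $\sigma$ on the size of that analyticity region. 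Applying that lemma with $Q_{h_n}$ in place of $Q$ yields $\|Q_{h_n} - \intp{n}{Q_{h_n}}\|_{L^\infty(\pspace)} \le C_1(\sigma)\,N_n^{-\mu_1}$, and adding the two bounds gives the claim.

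The main obstacle is the legitimacy of using one and the same $\sigma$ (hence one constant $C_1(\sigma)$) for every $n$: one must know that the discrete maps $Q_{h_n}$ are analytic in $\lambda$ with a region of analyticity bounded below uniformly in $h_n$ and $\Delta t_n$. This is where the structural hypotheses on \eqref{eq:genlin} are used --- $\bA$ convex with smooth second derivatives, and well-posedness for a.e.\ $\brv$ --- together with the fact that the solution's dependence on the uncertain coefficients is analytic with a stability region inherited by the discrete solution operator from the continuous problem, the discrete scheme being a consistent, stable perturbation. Once this uniform analyticity is in hand, no new estimates are needed: the two invoked results and the triangle inequality suffice, and the constants $C_1(\sigma)$ and $C_1(\sol)$ are precisely those of Lemma~\ref{lemma:sg-point-wise-error} and \eqref{eq:deterrbndQoI}.
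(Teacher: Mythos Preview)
Your proposal is correct and is essentially the natural rigorous version of what the paper does: the paper gives no explicit proof, merely stating that ``Combining Lemma~\ref{lemma:sg-point-wise-error} and \eqref{eq:deterrbndQoI} gives the following result,'' which is exactly your triangle-inequality split through $Q_{h_n}$ followed by the two cited bounds. Your discussion of why to route through $Q_{h_n}$ rather than $\intp{n}{Q}$, and your identification of the need for uniform-in-$n$ analyticity of $Q_{h_n}$ so that a single $C_1(\sigma)$ works, are genuine points that the paper does not address; in that sense your argument is more careful than the paper's own treatment.
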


Combining Lemma~\ref{lemma:total-point-wise-error} with Theorem~\ref{thm:kdepf_convergence} gives the following result.
\begin{cor}\label{cor:botherror}
Under the assumptions of Theorem~\ref{thm:kdepf_convergence} and Lemma~\ref{lemma:total-point-wise-error}, the error in the push-forward of the prior using an isotropic sparse grid approximation based on Clenshaw-Curtis abscissas with $N_n$ points and a discretization of \eqref{eq:genlin} satisfies
\begin{equation}\label{eq:qoiaKDE_pf_conv_botherror}
\norm{\pfpriordens(q)-\pfpriordensaKDE(q)}_{L^\infty(\dspace)} \leq C \left( \left(\frac{\log \numsamp}{\numsamp}\right)^{\frac{s}{2s+\dspacedim}} + C_1(\sigma) \numpts_n^{-\mu_1} + C_1(\sol)\left(h_n^{r+\alpha}+\Delta t_n\right)\right),
\end{equation}
and
\begin{equation}\label{eq:qoiaKDE_pf_error_botherror}
\norm{\pfpriordens(\qoi)-\pfpriordensaKDE(\qoia)}_{L^\infty(\pspace)} \leq C \left(\left(\frac{\log \numsamp}{\numsamp}\right)^{\frac{s}{2s+\dspacedim}} + C_1(\sigma) \numpts_n^{-\mu_1} + C_1(\sol)\left(h_n^{r+\alpha}+\Delta t_n\right)\right),
\end{equation}
where $\mu_1 =\frac{\sigma}{1+\log{2\dspacedim}}$.
\end{cor}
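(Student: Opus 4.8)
The plan is to apply Theorem~\ref{thm:kdepf_convergence} essentially verbatim, using Lemma~\ref{lemma:total-point-wise-error} to supply an explicit bound for the model-error term $\|\qoi-\qoia\|_{L^\infty(\pspace)}$ that appears on the right-hand sides of \eqref{eq:qoiaKDE_pf_conv} and \eqref{eq:qoiaKDE_pf_error}. First I would check that the combined sequence $(\qoia)$ really does converge to $\qoi$ in $L^\infty(\pspace)$, as required by Theorem~\ref{thm:kdepf_convergence}: by the monotone refinement convention imposed on the sequence (so that $h_n\le h_m$, $\Delta t_n\le\Delta t_m$, and $N_m\le N_n$ whenever $m<n$), together with $h_n,\Delta t_n\to 0$ and $N_n\to\infty$, the estimate in Lemma~\ref{lemma:total-point-wise-error} forces $\|\qoi-\qoia\|_{L^\infty(\pspace)}\to 0$. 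Hence all hypotheses of Theorem~\ref{thm:kdepf_convergence} are in force.

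Next, Theorem~\ref{thm:kdepf_convergence} supplies, with a generic constant $C$,
\[
\norm{\pfpriordens(q)-\pfpriordensaKDE(q)}_{L^\infty(\dspace)} \leq C\left(\left(\frac{\log \numsamp}{\numsamp}\right)^{\frac{s}{2s+\dspacedim}} + \|\qoi-\qoia\|_{L^\infty(\pspace)}\right),
\]
and likewise for the $L^\infty(\pspace)$ quantity in \eqref{eq:qoiaKDE_pf_error}. Into both bounds I would substitute the explicit estimate $\|\qoi-\qoia\|_{L^\infty(\pspace)}\le C_1(\sigma)\numpts_n^{-\mu_1}+C_1(\sol)(h_n^{r+\alpha}+\Delta t_n)$ from Lemma~\ref{lemma:total-point-wise-error}, with $\mu_1$ as in Lemma~\ref{lemma:sg-point-wise-error}; keeping $C_1(\sigma)$ and $C_1(\sol)$ explicit and absorbing only the KDE prefactor into $C$ yields exactly \eqref{eq:qoiaKDE_pf_conv_botherror} and \eqref{eq:qoiaKDE_pf_error_botherror}.

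There is no genuine obstacle here: the triangle-inequality decomposition inside the proof of Theorem~\ref{thm:kdepf_convergence} already separates the error into a pure-KDE contribution (controlled by Theorem~\ref{thm:kde_error}) and a contribution proportional to $\|\qoi-\qoia\|_{L^\infty(\pspace)}$ (controlled by Lemma~\ref{lemma:kde_conv_kde}), so no new estimate is required. The only points deserving a word of care are bookkeeping: that the constants $C_1(\sigma)$ and $C_1(\sol)$ in Lemma~\ref{lemma:total-point-wise-error} depend on neither $\numsamp$, $n$, nor the discretization parameters --- which is asserted there --- and that Assumptions~\ref{assump:pfprior} and~\ref{assump:surrogate}, Lipschitz continuity of $K$, and the regularity and bandwidth hypotheses of Theorem~\ref{thm:kde_error} are all already assumed for this particular family $(\qoia)$ in the statement of the corollary, so nothing about them remains to be verified.
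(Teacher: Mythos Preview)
Your proposal is correct and matches the paper's approach exactly: the paper offers no explicit proof and simply states that the corollary follows from ``combining Lemma~\ref{lemma:total-point-wise-error} with Theorem~\ref{thm:kdepf_convergence},'' which is precisely the substitution you carry out. If anything, you have supplied more detail than the paper does, including the verification that $\qoia\to\qoi$ in $L^\infty(\pspace)$ under the stated refinement convention.
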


Combining Lemma~\ref{lemma:total-point-wise-error} with Theorem~\ref{thm:posterior_convergenceKDE} gives the following result.
\begin{cor}\label{cor:botherror_post}
Under the assumptions of Theorem~\ref{thm:posterior_convergenceKDE}, the error in the posterior using an isotropic sparse grid approximation based on Clenshaw-Curtis abscissas with $N_n$ points and a discretization of \eqref{eq:genlin} satisfies
\begin{equation}\label{eq:qoiaKDE_post_conv_botherror}
\norm{\postdens(\lambda)-\postdensaKDE(\lambda)}_{L^1(\pspace)} \leq C \left( \left(\frac{\log \numsamp}{\numsamp}\right)^{\frac{s}{2s+\dspacedim}} + C_1(\sigma) \numpts_n^{-\mu_1} + C_1(\sol)\left(h_n^{r+\alpha}+\Delta t_n\right) \right).
\end{equation}
\end{cor}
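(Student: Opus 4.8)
The plan is to treat Corollary~\ref{cor:botherror_post} as a direct consequence of chaining Theorem~\ref{thm:posterior_convergenceKDE} with Lemma~\ref{lemma:total-point-wise-error}, so essentially no new analysis is required. First I would check that the sequence $(\qoia)$ built from the combined sparse-grid/discretization hierarchy actually satisfies the hypotheses of Theorem~\ref{thm:posterior_convergenceKDE}. The only hypothesis not literally inherited as a standing assumption is the requirement that $\qoia\to\qoi$ in $L^\infty(\pspace)$; this follows from Lemma~\ref{lemma:total-point-wise-error} together with the monotonicity conditions $h_n\le h_m$, $\Delta t_n\le\Delta t_m$, $N_m\le N_n$ for $m<n$ and the underlying refinements $h_n,\Delta t_n\to 0$ and $N_n\to\infty$, since then the right-hand side $C_1(\sigma)\numpts_n^{-\mu_1}+C_1(\sol)(h_n^{r+\alpha}+\Delta t_n)\to 0$. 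The remaining ingredients (Assumptions~\ref{assump:pfprior}, \ref{assump:surrogate}, \ref{assump:dom}, \ref{assump:approxdom}, \ref{assump:approxdomKDE}, the KDE regularity conditions of Theorem~\ref{thm:kde_error}, precompactness of $\pspace$, and Lipschitz continuity of $\obsdens$) are all carried over verbatim from ``Under the assumptions of Theorem~\ref{thm:posterior_convergenceKDE}''.

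Second, I would invoke Theorem~\ref{thm:posterior_convergenceKDE} to obtain
\[
\norm{\postdens(\lambda)-\postdensaKDE(\lambda)}_{L^1(\pspace)}\leq C\left(\left(\frac{\log \numsamp}{\numsamp}\right)^{\frac{s}{2s+\dspacedim}}+\norm{\qoi-\qoia}_{L^\infty(\pspace)}\right),
\]
and then substitute the bound of Lemma~\ref{lemma:total-point-wise-error} for $\norm{\qoi-\qoia}_{L^\infty(\pspace)}$. Absorbing $C$, $C_1(\sigma)$, and $C_1(\sol)$ into a single generic constant (and recalling $\mu_1=\sigma/(1+\log 2\dspacedim)$) yields exactly \eqref{eq:qoiaKDE_post_conv_botherror}. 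This is the entire argument.

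I do not anticipate any genuine obstacle, as the substantive estimates already live in Theorem~\ref{thm:posterior_convergenceKDE} and Lemma~\ref{lemma:total-point-wise-error}. The one point worth a sentence of care is that the predictability assumptions~\ref{assump:approxdom}--\ref{assump:approxdomKDE} are now imposed on the combined approximation $\qoia$ (not on a pure spatial discretization or a pure surrogate separately); as discussed in Section~\ref{subsec:numercons}, in practice this is verified a posteriori through the diagnostic $\mathrm{I}(\postdensaKDE)\approx 1$, and it is simply assumed in the hypotheses here. A second minor remark, exactly as for Corollaries~\ref{cor:surrerror_post} and~\ref{cor:discerror_post}, is that the estimate is only asymptotically informative when $\numsamp$ grows jointly with $n$, since the kernel-density term does not decay as $n\to\infty$ alone.
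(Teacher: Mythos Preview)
Your proposal is correct and matches the paper's approach exactly: the paper does not give a separate proof of this corollary but simply states that it follows by combining Lemma~\ref{lemma:total-point-wise-error} with Theorem~\ref{thm:posterior_convergenceKDE}, which is precisely the substitution argument you describe. Your additional remarks on verifying the $L^\infty$ convergence hypothesis and on the predictability assumptions are sound elaborations of points the paper leaves implicit.
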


\subsubsection{Forward problem}\label{subsubsec:ppmodel_FWD}
The goal of this section is to verify the convergence rates in
Corollary~\ref{cor:botherror}.
Consider the nonlinear system of ordinary differential equations governing a competitive
Lotka-Volterra model of the
population dynamics of species competing for some common resource. The model is given by
\begin{equation}\label{eq:predprey}
\begin{cases}
\frac{d\sol_i}{dt} = r_i\sol_i\left(1-\sum_{j=1}^3\alpha_{ij}\sol_j\right), & \quad t\in (0,10],\\
\sol_i(0) = \sol_{i,0}
\end{cases},
\end{equation}
for $i = 1,2,3$.  The initial condition, $\sol_{i,0}$, and the self-interacting terms, $\alpha_{ii}$, are given, but the remaining interaction parameters, $\alpha_{ij}$ with $i\neq j$ as well as the reproductivity parameters, $r_i$, are unknown.
Thus, we have a total of 9 uncertain parameters.
We assume that these parameters are each uniformly distributed on $[0.3,0.7]$.
The quantity of interest is the population of the third species at the final time, $\sol_3(10)$.

We approximate the solution to~\eqref{eq:predprey} in time using an explicit Euler method.
For the reference solution, we use a time step of $\Delta t=1/1000$.
We generate a set of 10,000 samples from the prior and solve the
discretized ODE for each of these samples.

The input parameter space is 9-dimensional, so it reasonable to construct a low-order
sparse grid approximation to reduce the number of samples of the discretized ODE.
We start with a time step size of $\Delta t=1/10$ and an isotropic sparse grid of level-1
(19 model evaluations).
We explore uniformly refining the time step with $\Delta t = 1/20, 1/40, 1/80, 1/160$
and refining the sparse grid to level-2 (181 model evaluations), level-3 (1177 model evaluations),
and level-4 (5929 model evaluations).

In Figure~\ref{fig:ex3_fwd_error} (left), we fix $\Delta t = 1/160$ and we see that the error in the push-forward of the prior
converges as the sparse grid is refined.
\begin{figure}[ht]
\begin{center}
\includegraphics[width=0.45\textwidth]{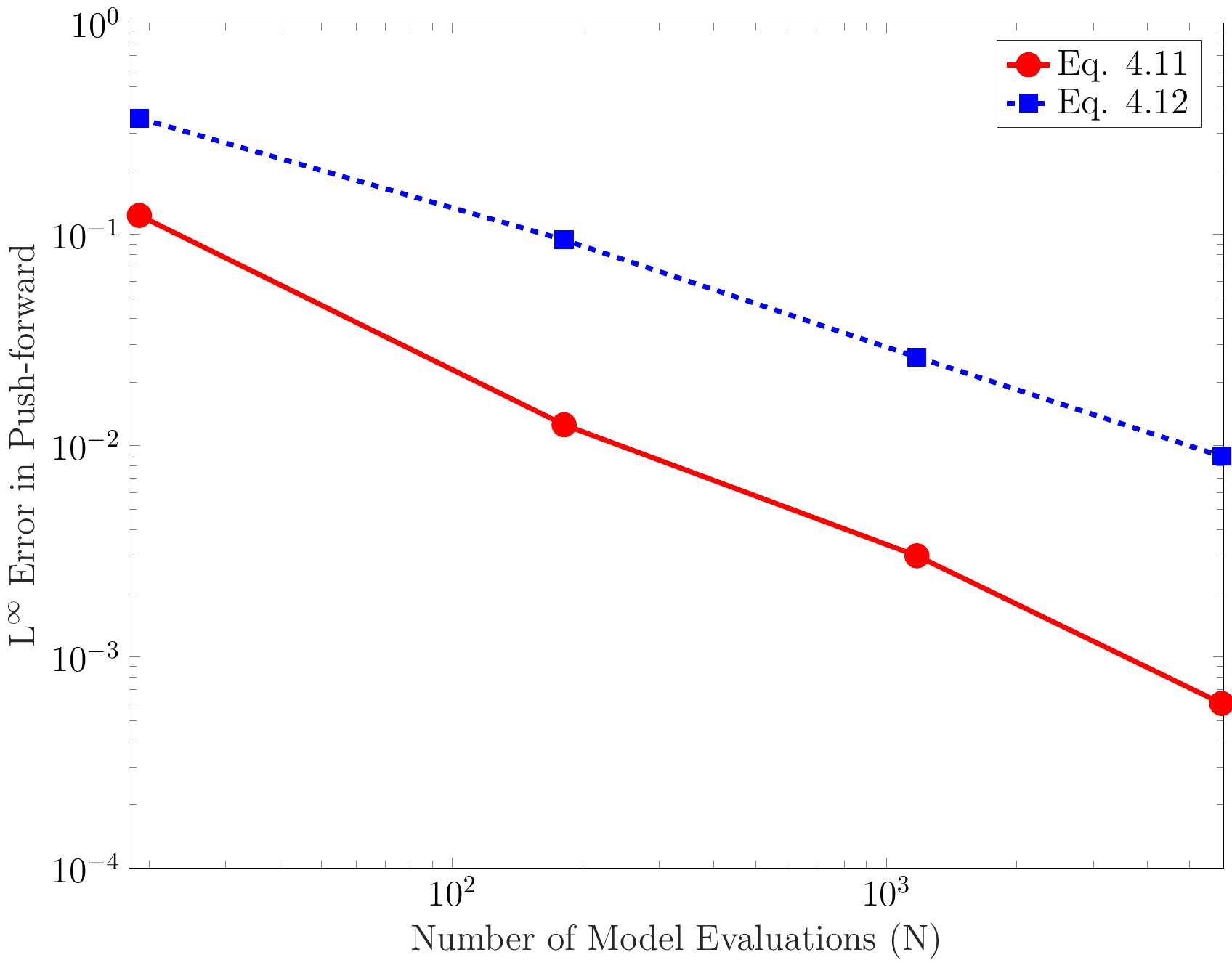}\hspace{0.3cm}
\includegraphics[width=0.45\textwidth]{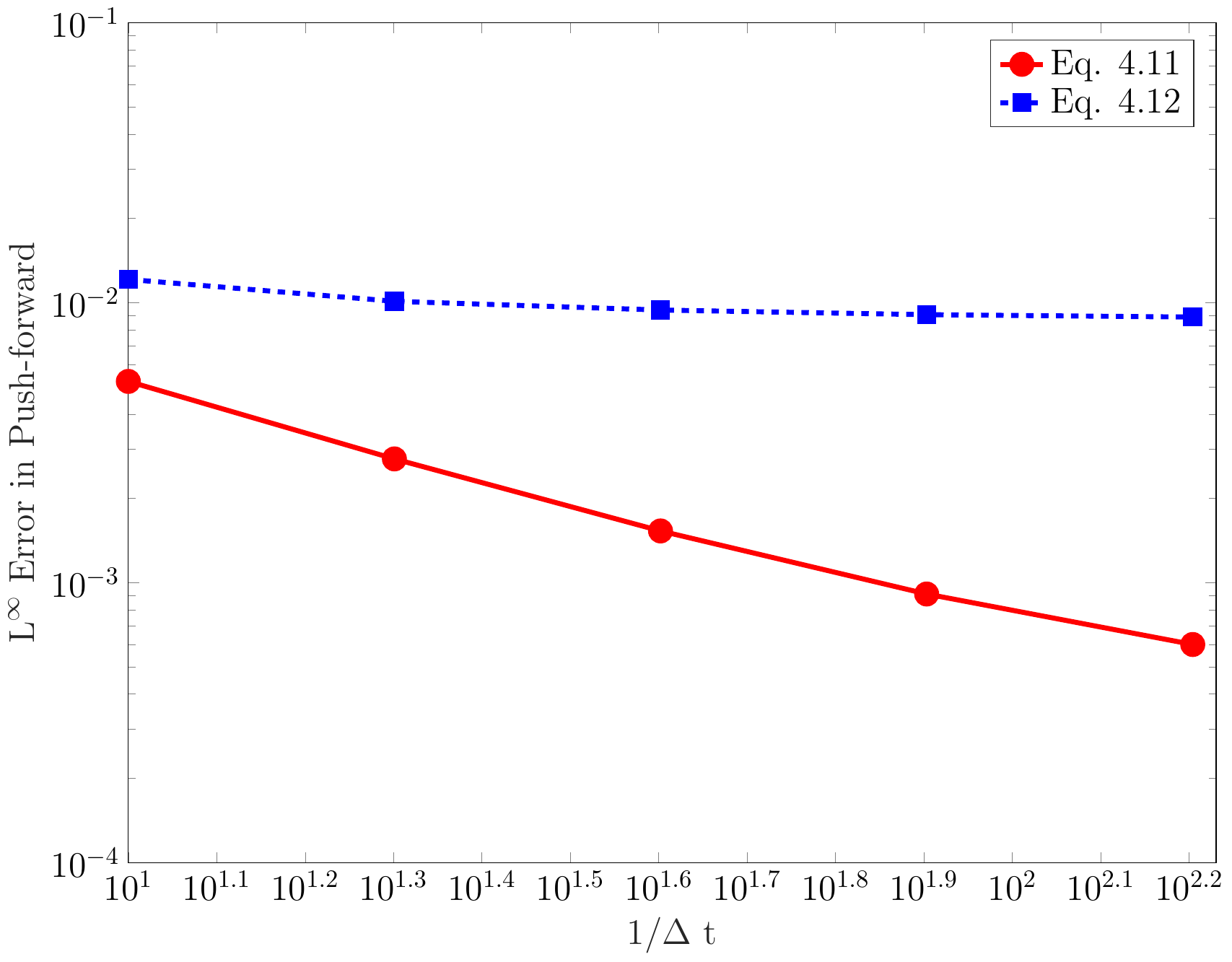}
\end{center}
\caption{Convergence of the push-forward of the prior as the sparse grid discretization is refined and the temporal discretization is held fixed at the highest level (left), and as the temporal discretization is refined and the sparse grid discretization is held at the highest level (right).}
\label{fig:ex3_fwd_error}
\end{figure}
In Figure~\ref{fig:ex3_fwd_error} (right), we fix the sparse grid at level-4 and assess the convergence in the error in the push-forward of the prior converges as the temporal discretization is refined.

We clearly see that the dominant contribution to the error comes from the sparse grid approximation.
The convergence plots in Figure~\ref{fig:ex3_fwd_error} stagnate once they reach the level-4 sparse grid error.
Ideally, we would use an a posteriori error estimation technique (see e.g., \cite{JAKEMAN201554,BDW,butler_constantine_wildey1,Butler2013,doi:10.1137/140962632}) to decompose the error into
the various contributions and adaptively choose which discretization to refine,
but that is beyond the scope of this paper.

We remark here that goal-oriented approaches for estimating the errors in approximations based upon spatial and temporal discretizations and surrogate  were developed in~\cite{BDW},
further generalized in~\cite{Butler2013}. These approaches were then used in~\cite{JAKEMAN201554,doi:10.1137/140962632} to separate the error into different contributions and adaptively control the error, and, extended in~\cite{adept}, to bound the error in probabilities of rare events. However none of these works considers the error induced in the estimates of push-forward densities.

\subsubsection{Inverse problem}\label{subsubsec:ppmodel_INV}
The goal of this section is to verify the convergence rate in
Corollary~\ref{cor:botherror_post}.
We use the model introduced in Section~\ref{subsubsec:ppmodel_FWD}.
To formulate a inverse problem, we assume the observed density on the QoI is
given by $\obsdens \sim N(0.7,1.0\text{E-4})$.
In Tables~\ref{tab:ex3_diagnostics_sgl1} and \ref{tab:ex3_diagnostics_dt10} we provide diagnostic data on the posterior densities produced using the lowest-order sparse grid and the coarsest temporal discretization, respectively.
\begin{table}[ht!]
\begin{center}
\begin{tabular}{c|c|c|c|c|c|c|c} \hline
& $\Delta t=\frac{1}{10}$ & $\Delta t= \frac{1}{20}$ & $\Delta t=\frac{1}{40}$ & $\Delta t=\frac{1}{80}$ & $\Delta t=\frac{1}{160}$ & Ref. & Truth \\ \hline
$\text{I}(\postdens)$               & 1.023    & 1.023    & 1.023    & 1.023    & 1.023    & 1.017    & 1.000 \\ \hline
$\text{KL}(\priordens : \postdens)$ & 2.449    & 2.453    & 2.455    & 2.456    & 2.457    & 2.552    & UNKN \\ \hline
Mean PF-post                        & 0.500    & 0.500    & 0.500    & 0.499    & 0.500    & 0.500    & 0.500 \\ \hline
Var. PF-post                        & 1.011e-4 & 1.031e-4 & 1.040e-4 & 1.040e-4 & 1.038e-4 & 1.029e-4 & 1.000e-4 \\ \hline
\end{tabular}
\end{center}
\caption{Comparison of the integral of the posterior, the KLD from the prior to the posterior, and the mean and variance of the push-forward of the posterior obtained using using various temporal discretizations and a level-1 sparse grid in Section~\ref{subsubsec:ppmodel_INV}.}
\label{tab:ex3_diagnostics_sgl1}
\end{table}
\begin{table}[ht!]
\begin{center}
\begin{tabular}{c|c|c|c|c|c|c} \hline
& Level-1 & Level-2 & Level-3 & Level-4 & Ref. & Truth \\ \hline
$\text{I}(\postdens)$               & 1.023    & 1.012    & 1.018    & 1.020    & 1.017    & 1.000 \\ \hline
$\text{KL}(\priordens : \postdens)$ & 2.449    & 2.544    & 2.557    & 2.559    & 2.552    & UNKN \\ \hline
Mean PF-post                        & 0.500    & 0.499    & 0.499    & 0.499    & 0.500    & 0.500 \\ \hline
Var. PF-post                        & 0.951e-4 & 1.022e-4 & 1.021e-4 & 1.059e-4 & 0.994e-4 & 1.000e-4 \\ \hline
\end{tabular}
\end{center}
\caption{Comparison of the integral of the posterior, the KLD from the prior to the posterior, and the mean and variance of the push-forward of the posterior obtained using various sparse grid approximations with a fixed temporal discretization ($\Delta t = \frac{1}{10}$) in Section~\ref{subsubsec:ppmodel_INV}}
\label{tab:ex3_diagnostics_dt10}
\end{table}
We see that, even for the coarsest discretization, the approximate models
satisfy Assumption \ref{assump:approxdomKDE} and can be used to solve the inverse problem.
We also see that KLD using the level-1 sparse grid does not give the same value as the other sparse grid levels
or the reference solution.
This indicates that while Assumption~\ref{assump:approxdomKDE} is satisfied, the approximate model
leads to a different posterior.

In Figure~\ref{fig:ex3_inv_error}, we plot the $L^1$-norm of the error in the posterior along with
the $L^\infty$-norm of the error in the approximate model.
We see that the error in the posterior converges at the same rate as the error in the approximate model.
\begin{figure}[ht]
\begin{center}
\includegraphics[width=0.45\textwidth]{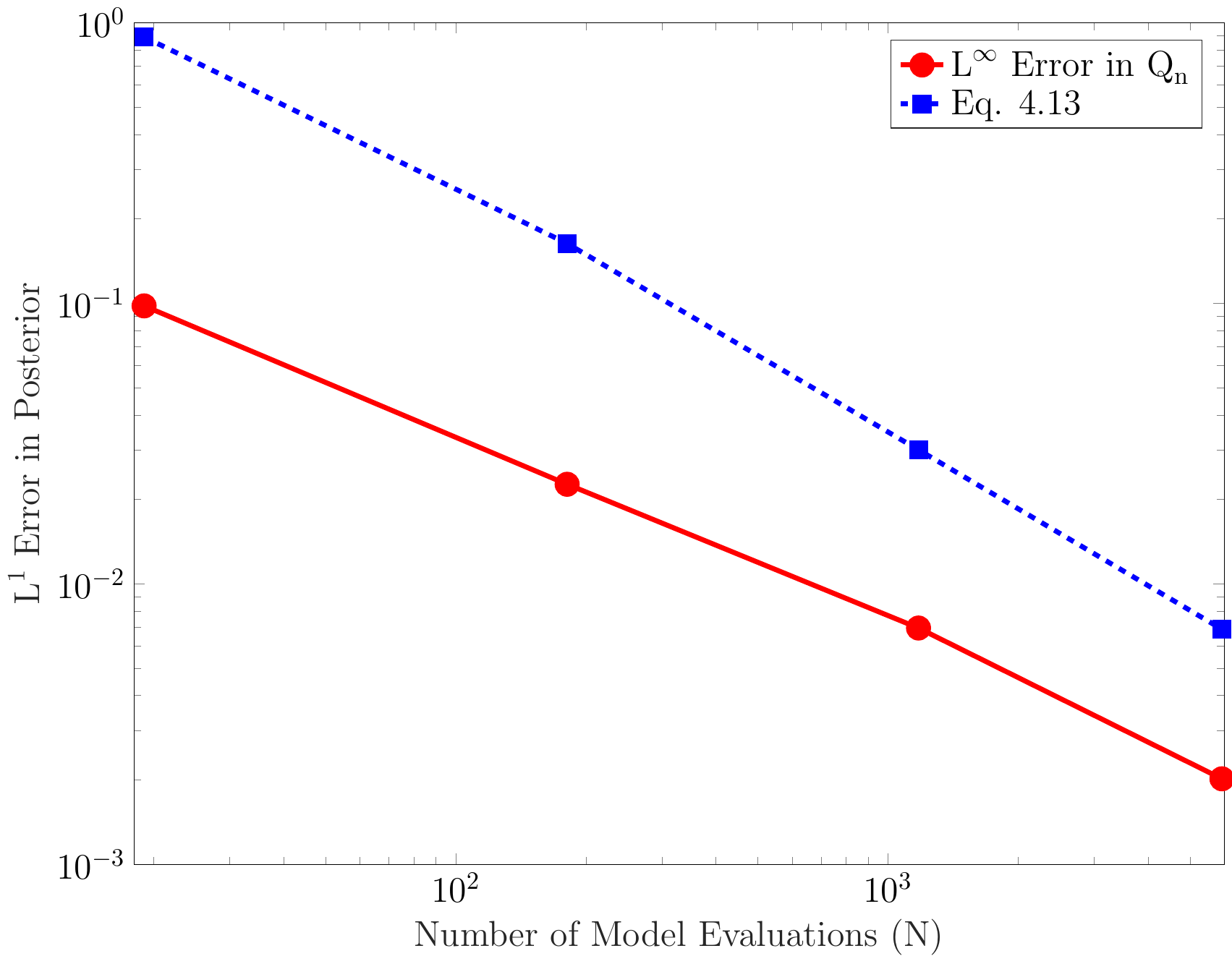}\hspace{0.3cm}
\includegraphics[width=0.45\textwidth]{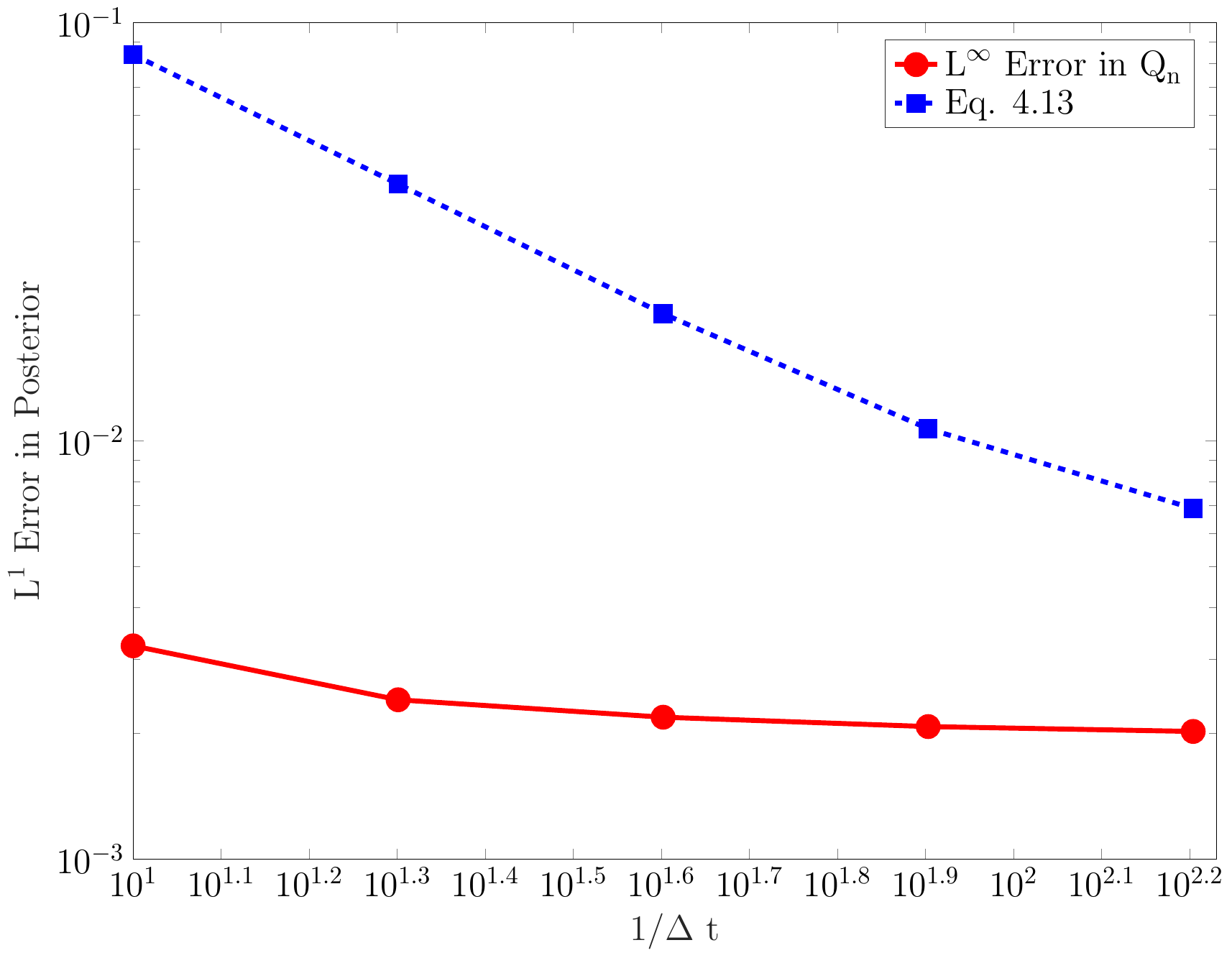}
\end{center}
\caption{Convergence of the posterior in the $L^1$-norm as the sparse grid discretization is refined and the temporal discretization is held fixed at the highest level (left), and as the temporal discretization is refined and the sparse grid discretization is held at the highest level (right).}
\label{fig:ex3_inv_error}
\end{figure}
As in Section~\ref{subsubsec:ppmodel_FWD}, the dominant contribution to the error is due to the sparse grid approximation, so
the convergence eventually stagnates when refining the temporal discretization.

\section{Conclusion}\label{sec:conclusions}
We developed a theoretical framework for analyzing the convergence of probability density functions computed using approximate models for both forward and inverse problems.
Our theoretical results are quite general and apply to any L$^\infty$-convergent sequence of approximate models can be considered.
We proved that the densities converge under quite reasonable assumptions and rates of convergence are obtained
under more stringent assumptions.
The rates of convergence explicitly show the dependence on the error introduced by approximating densities and the error induced via the use of approximate models.
We have verified the theoretical results using sequences of approximate models derived from
discretized partial and ordinary differential equations as well as from sparse grid approximations.

\section{Acknowledgments}

J.D.~Jakeman's work was partially supported by DARPA EQUIPS.  T.~Wildey's work was supported by the Office of Science Early Career Research Program.

The views expressed in the article do not necessarily represent the views of the U.S. Department of Energy or the United States Government.  Sandia National Laboratories is a multimission laboratory managed and operated by National Technology and Engineering Solutions of Sandia, LLC., a wholly owned subsidiary of Honeywell International, Inc., for the U.S. Department of Energy's National Nuclear Security Administration under contract DE-NA-0003525.

\bibliographystyle{plain}
\bibliography{cbayes,error_refs}

\begin{thebibliography}{10}

\bibitem{dakota}
B.M. Adams, L.E. Bauman, W.J. Bohnhoff, K.R. Dalbey, M.S. Ebeida, J.P. Eddy,
  M.S. Eldred, P.D. Hough, K.T. Hu, J.D. Jakeman, J.A. Stephens, L.P. Swiler,
  D.M. Vigil, , and T.M. Wildey.
\newblock {Dakota, A Multilevel Parallel Object-Oriented Framework for Design
  Optimization, Parameter Estimation, Uncertainty Quantification, and
  Sensitivity Analysis: Version 6.0 User’s Manual}.
\newblock Technical Report SAND2014-4633 (Version 6.6), Sandia National
  Laboratories, 2017.

\bibitem{barthelmann00}
V.~Barthelmann, E.~Novak, and K.~Ritter.
\newblock High dimensional polynomial interpolation on sparse grids.
\newblock {\em Advances in Computational Mathematics}, 12:273--288, 2000.

\bibitem{Beck_Ran_opt_a_post_FE_01}
Roland Becker and Rolf Rannacher.
\newblock An optimal control approach to a posteriori error estimation in
  finite element methods.
\newblock {\em Acta Numerica}, 10:1--102, 2001.

\bibitem{Bernardo1994}
J.~M. Bernardo and F.~M. Adrian.
\newblock {\em {Bayesian Theory}}.
\newblock Wiley, 1994.

\bibitem{Boos_85}
D.~Boos.
\newblock A converse to scheffe’s theorem.
\newblock {\em The Annals of Statistics}, 13(1):423—--427, 1985.

\bibitem{doi:10.1137/140962632}
C.~M. Bryant, S.~Prudhomme, and T.~Wildey.
\newblock Error decomposition and adaptivity for response surface
  approximations from pdes with parametric uncertainty.
\newblock {\em SIAM/ASA Journal on Uncertainty Quantification},
  3(1):1020--1045, 2015.

\bibitem{bungartz04}
H.-J. Bungartz and M.~Griebel.
\newblock Sparse grids.
\newblock {\em Acta Numerica}, 13:147--269, 2004.

\bibitem{butler_constantine_wildey1}
T.~Butler, P.~Constantine, and T.~Wildey.
\newblock A posteriori error analysis of parameterized linear systems using
  spectral methods.
\newblock {\em SIAM. J. Matrix Anal. Appl.}, 33:195--209, 2012.

\bibitem{BDW}
T.~Butler, C.~Dawson, and T.~Wildey.
\newblock A posteriori error analysis of stochastic spectral methods.
\newblock {\em SIAM J. Sci. Comput.}, 33:1267--1291, 2011.

\bibitem{Butler2013}
T.~Butler, C.~Dawson, and T.~Wildey.
\newblock Propagation of uncertainties using improved surrogate models.
\newblock {\em SIAM/ASA Journal on Uncertainty Quantification}, 1(1):164--191,
  2013.

\bibitem{cbayes}
T.~Butler, J.~Jakeman, and T.~Wildey.
\newblock Combining push-forward measures and bayes’ rule to construct
  consistent solutions to stochastic inverse problems.
\newblock Accepted for publication in SIAM J. Sci. Comput., 2017.

\bibitem{ref:Ciar78}
P.~G. Ciarlet.
\newblock Basic error estimates for elliptic problems.
\newblock In {\em Handbook of numerical analysis, Vol.\ II}, pages 17--351.
  North-Holland, Amsterdam, 1991.

\bibitem{Dellacherie_Meyer}
C.~Dellacherie and P.A. Meyer.
\newblock {\em Probabilities and Potential}.
\newblock North-Holland Publishing Co., Amsterdam, 1978.

\bibitem{Devroye85}
L~Devroye and L.~Gy{\''o}rfi.
\newblock {\em {Nonparametric Density Estimation: The L$_1$ View}}.
\newblock Wiley, New York, 1985.

\bibitem{eehj_book_96}
K.~Eriksson, D.~Estep, P.~Hansbo, and C.~Johnson.
\newblock {\em Computational differential equations}.
\newblock Cambridge University Press, Cambridge, 1996.

\bibitem{ganis2008stochastic}
Benjamin Ganis, Hector Klie, Mary~F Wheeler, Tim Wildey, Ivan Yotov, and
  Dongxiao Zhang.
\newblock Stochastic collocation and mixed finite elements for flow in porous
  media.
\newblock {\em Computer methods in applied mechanics and engineering},
  197(43):3547--3559, 2008.

\bibitem{Gelman2013}
A.~Gelman, J.~B. Carlin, H.~S. Stern, D.~B. Dunson, A.~Vehtari, and D.~B.
  Rubin.
\newblock {\em {Bayesian Data Analysis, Third Edition}}.
\newblock Chapman and Hall/CRC, 2013.

\bibitem{GhanemSpanos}
R.~Ghanem and P.~Spanos.
\newblock {\em Stochastic {F}inite {E}lements: {A Spectral Approach}}.
\newblock Springer Verlag, New York, 2002.

\bibitem{GilesSuli}
Michael~B. Giles and Endre Süli.
\newblock Adjoint methods for pdes: a posteriori error analysis and
  postprocessing by duality.
\newblock {\em Acta Numerica}, 11:145--236, 1 2002.

\bibitem{hansen2008}
B.~E. Hansen.
\newblock Uniform convergence rates for kernel estimation with dependent data.
\newblock {\em Econometric Theory}, 24(3):726--748, 2008.

\bibitem{Jakeman10Epistemic}
J.D. Jakeman, M.~Eldred, and D.~Xiu.
\newblock Numerical approach for quantification of epistemic uncertainty.
\newblock {\em J. Comput. Phys.}, 229(12):4648--4663, 2010.

\bibitem{jakeman2013localuq}
J.D. Jakeman and S.G. Roberts.
\newblock Local and dimension adaptive stochastic collocation for uncertainty
  quantification.
\newblock In Jochen Garcke and Michael Griebel, editors, {\em Sparse Grids and
  Applications}, volume~88 of {\em Lecture Notes in Computational Science and
  Engineering}, pages 181--203. Springer Berlin Heidelberg, 2013.

\bibitem{JAKEMAN201554}
J.D. Jakeman and T.~Wildey.
\newblock Enhancing adaptive sparse grid approximations and improving
  refinement strategies using adjoint-based a posteriori error estimates.
\newblock {\em Journal of Computational Physics}, 280:54 -- 71, 2015.

\bibitem{Jaynes1998}
E.~T. Jaynes.
\newblock {\em {Probability Theory: The Logic of Science}}.
\newblock 1998.

\bibitem{Kennedy_O_JRSSSB_2001}
Marc~C. Kennedy and Anthony O'Hagan.
\newblock Bayesian calibration of computer models.
\newblock {\em Journal of the Royal Statistical Society: Series B (Statistical
  Methodology)}, 63(3):425--464, 2001.

\bibitem{kl}
S.~Kullback and R.~A. Leibler.
\newblock On information and sufficiency.
\newblock {\em The Annals of Mathematical Statistics}, 22:79--86, 1951.

\bibitem{ma09}
X.~Ma and N.~Zabaras.
\newblock An adaptive hierarchical sparse grid collocation algorithm for the
  solution of stochastic differential equations.
\newblock {\em J. Comput. Phys.}, 228:3084--3113, 2009.

\bibitem{Ma:2009:AHS:1514432.1514547}
Xiang Ma and Nicholas Zabaras.
\newblock An adaptive hierarchical sparse grid collocation algorithm for the
  solution of stochastic differential equations.
\newblock {\em J. Comput. Phys.}, 228(8):3084--3113, May 2009.

\bibitem{Marzouk_X_CCP_2009}
Y.~M. Marzouk and D.~Xiu.
\newblock A stochastic collocation approach to bayesian inference in inverse
  problems.
\newblock {\em Communications in Computational Physics}, 6(1):826--847, 2009.

\bibitem{nobile08a}
F.~Nobile, R.~Tempone, and C.G. Webster.
\newblock A sparse grid stochastic collocation method for partial differential
  equations with random input data.
\newblock {\em SIAM J. Numer. Anal.}, 46(5):2309--2345, 2008.

\bibitem{nobile2008}
F.~Nobile, R.~Tempone, and C.G. Webster.
\newblock A sparse grid stochastic collocation method for partial differential
  equations with random input data.
\newblock {\em SIAM Journal on Numerical Analysis}, 46(5):2309--2345, 2008.

\bibitem{oden2001goal}
J.~T. Oden and S.~Prudhomme.
\newblock Goal-oriented error estimation and adaptivity for the finite element
  method.
\newblock {\em Computers \& mathematics with applications}, 41(5):735--756,
  2001.

\bibitem{rasmussen2006}
Carl~Edward Rasmussen.
\newblock Gaussian processes for machine learning.
\newblock 2006.

\bibitem{Robert2001}
C.~P. Robert.
\newblock {\em {The Bayesian Choice - A Decision Theoretic Motivation (second
  ed.)}}.
\newblock Springer, 2001.

\bibitem{Schwab2006100}
Christoph Schwab and Radu~Alexandru Todor.
\newblock Karhunen-{L}o\'eve approximation of random fields by generalized fast
  multipole methods.
\newblock {\em Journal of Computational Physics}, 217(1):100 -- 122, 2006.
\newblock Uncertainty Quantification in Simulation Science.

\bibitem{Stuart_IP_2010}
A.~M. Stuart.
\newblock Inverse problems: A bayesian perspective.
\newblock {\em Acta Numerica}, 19:451--559, 5 2010.

\bibitem{Sweeting_86}
T.~J. Sweeting.
\newblock On a converse to scheffe’s theorem.
\newblock {\em The Annals of Statistics}, 14(3):1252–--1256, 1986.

\bibitem{Terrel_S_JSTOR_1992}
George~R. Terrell and David~W. Scott.
\newblock Variable kernel density estimation.
\newblock {\em The Annals of Statistics}, 20(3):1236--1265, 1992.

\bibitem{wheeler2011multiscale}
Mary~F Wheeler, Tim Wildey, and Ivan Yotov.
\newblock A multiscale preconditioner for stochastic mortar mixed finite
  elements.
\newblock {\em Computer Methods in Applied Mechanics and Engineering},
  200(9):1251--1262, 2011.

\bibitem{adept}
T.~Wildey and T.~Butler.
\newblock Utilizing error estimates for surrogate models to accurately predict
  probabilities of events.
\newblock To Appear in Int.~J.~Uncertainty Quantification, 2018.

\bibitem{XiuKarniadakis}
D.~Xiu and G.~Karniadakis.
\newblock The {W}iener-{A}skey polynomial chaos for stochastic differential
  equations.
\newblock {\em SIAM J. Sci. Comput.}, 24:619--644, 2002.

\bibitem{zhang2004efficient}
Dongxiao Zhang and Zhiming Lu.
\newblock An efficient, high-order perturbation approach for flow in random
  porous media via {K}arhunen-{L}o\'eve and polynomial expansions.
\newblock {\em Journal of Computational Physics}, 194(2):773--794, 2004.

\end{thebibliography}

\end{document}